\def\@tocline#1#2#3#4#5#6#7{\relax
  \ifnum #1>\c@tocdepth 
  \else
    \par \addpenalty\@secpenalty\addvspace{#2}%
    \begingroup \hyphenpenalty\@M
    \@ifempty{#4}{%
      \@tempdima\csname r@tocindent\number#1\endcsname\relax
    }{%
      \@tempdima#4\relax
    }%
    \parindent\z@ \leftskip#3\relax \advance\leftskip\@tempdima\relax
    \rightskip\@pnumwidth plus4em \parfillskip-\@pnumwidth
    #5\leavevmode\hskip-\@tempdima
      \ifcase #1
      \or\or \hskip 2em \or \hskip 2homologyem \else \hskip 3em \fi%
      #6\nobreak\relax
    \dotfill\hbox to\@pnumwidth{\@tocpagenum{#7}}\par
    \nobreak
    \endgroup
  \fi}
\theoremstyle{plain}
\newtheorem{theorem}{Theorem}[section]
\newtheorem{question}[theorem]{Question}
\newtheorem{lemma}[theorem]{Lemma}
\newtheorem{corollary}[theorem]{Corollary}
\newtheorem{proposition}[theorem]{Proposition}
\newtheorem{conjecture}[theorem]{Conjecture}
\theoremstyle{definition}
\newtheorem{remark}[theorem]{Remark}
\newtheorem{definition}[theorem]{Definition}
\numberwithin{equation}{section}
\newcommand{\dlim}{\mathop{\varinjlim}\limits}  
\DeclareMathOperator{\hlim}{{\rm{holim}}}
\newcommand{\Sm}{{\rm Sm}}
\newcommand{\Spc}{{\rm Spc}}
\newcommand{\CH}{{\rm CH}}
\newcommand{\E}{{\rm E}}
\newcommand{\K}{{\rm K}}
\newcommand{\coh}{\operatorname{H}}
\newcommand{\Ker}{{\rm Ker \ }}
\newcommand{\Coker}{{\rm Coker \ }}
\newcommand{\Hom}{{\rm Hom}}
\newcommand{\im}{{\rm Im \ }}
\newcommand{\Spec}{{\rm Spec \,}}
\newcommand{\GL}{{\rm GL}} 
\newcommand{\SL}{{\rm SL}}
\newcommand{\BSL}{{\rm BSL}}
\newcommand{\Sp}{{\rm Sp}} 
\newcommand{\BSp}{{\rm BSp}}
\newcommand{\ie}{{\it i.e.\/},\ }
\newcommand{\nis}{Nis}
\newcommand{\KM}{{\&K}^{\rm M}}                      
\newcommand{\KMW}{{\&K}^{\rm MW}}		
\newcommand{\piA}{{\bm \pi}^{\A^1}}   		
\newcommand{\A}{\mathbb A}
\newcommand{\G}{\mathbb G}
\newcommand{\F}{\mathbb F}
\newcommand{\w}{\omega}
\newcommand{\sO}{\mathcal O}
\newcommand{\simps}{{\rm{\Delta^{op}Shv}}}
\newcommand{\Rmap}{{\rm RMap}}
\newcommand{\SK}{{\rm SK}}
\def\<{\langle}
\def\>{\rangle} 
\def\-{\overline} 
\def\~{\widetilde}
\def\^{\widehat}
\def\fr{\mathfrak}
\def\@{\mathcal}
\def\#{\mathbb}
\def\&{\mathbf}
\def\_{\underline}
\def\Dot{\bullet} 
\def\x{\times}
\newcommand{\fixme}[1]{\textcolor{red}{#1}}
\begin{document}

\title[Cancellation and splitting  of Symplectic modules and Euler class group]{Cancellation and splitting  of Symplectic modules in the critical range and Euler class group
}

\author{Rakesh Pawar}
\address{Department of Mathematics, Indian Institute of Technology Jodhpur, Jodhpur, Rajasthan, 342030, India
}
\email{pawarrakesh.math@gmail.com}
\author{Husney Parvez Sarwar}
\address{Department of Mathematics, Indian Institute of Technology Kharagpur,
Kharagpur 721302, West Bengal, India}
\email{ parvez@maths.iitkgp.ac.in \\
 mathparvez@gmail.com}

\date{\today}

\date{\today}

\subjclass[2020]{
13C10,  14F20, 14F35}
\keywords{projective modules; motivic homotopy theory; Euler class groups}

\begin{abstract} 
In this paper, we discuss the cancellation and splitting of the symplectic modules. The symplectic cancellation result presented here can be thought of as an analog of the  projective module cancellation result of Fasel.
The symplectic splitting is similar to Murthy's splitting theorem.  
To prove the cancellation and splitting, we analyze the Postnikov towers in the
$\mathbb{A}^1-$homotopy category.
We study the top cohomology group with coefficients in certain $\#A^1-$homotopy sheaf of even quadrics and show certain structural properties of these cohomology groups.
As another application of this, we answer partially a question of Mrinal Das about
the isomorphism of $(d-1)-$th Euler class group and  $(d-1)-$th Chow group of a smooth affine scheme of dimension $d$.
\end{abstract}

\maketitle
\tableofcontents
\section{Introduction}
Let us begin with two classical results. Let $R$ be a commutative Noetherian ring of dimension $d$ and let $P$ be a finitely generated projective $R-$module of rank $r$. 
For $r>d$, Serre's theorem (\cite{Serre58}) says that $P\cong Q\oplus R$ ({\it Serre's splitting theorem}) and Bass' theorem (\cite{Bass64}) says that $P\oplus Q \cong P'\oplus Q$ implies that $ P\cong P'$ ({\it Bass's cancellation theorem}). For ${\rm rank} (P)=\dim R$, the non-trivial tangent bundle over the real algebraic sphere which is stably free, asserts that both the above theorems are false in the case $r=d=2$. However, over an algebraically closed field, Murthy \cite{Mu94} and Suslin \cite{Sus77} proved the following two results on splitting and cancellation respectively. Let $R$ be a smooth affine algebra of Krull dimension $d$ over an algebraically closed field and $P$ a finitely generated projective $R-$module with ${\rm rank}(P)=d$. M. P. Murthy \cite[Theorem 3.8]{Mu94} proved that $c_d(P)=0$ if and only if $P\cong Q \oplus R$ where $c_d(P)$ is the top Chern class of $P$ in the Chow group $\CH^d(\Spec(R))$. Suslin \cite{Sus77} proved that 
$P\oplus Q \cong P'\oplus Q$ implies that $P\cong P'$ even if $R$ is not a smooth algebra. Recently,
Fasel \cite{Fas24} proved that (under the assumption  $\frac{1}{d!}\in R\setminus 0$) if ${\rm rank}(P)=d-1$,
then also the cancellation holds, \ie $P\oplus Q \cong P'\oplus Q$ implies that $ P\cong P'$ which was Suslin's conjecture  \cite[Conjecture 1]{Fas24}.
Fasel's result is also optimal in the view of Mohan Kumar's example \cite{MK85}. Similar to Suslin's cancellation conjecture, there is a conjecture for the splitting called Murthy's splitting conjecture \cite{AF15}.
\begin{conjecture}[Murthy's splitting conjecture \cite{AF15}]
 Let $R$ be a smooth affine algebra of Krull dimension $d\geq 3$ over an algebraically closed field. Let $P$ be a projective $R-$module of {\rm rank} $d-1$. Then the top Chern class $c_{d-1}(P)=0$ in $\CH^{d-1}(\Spec(R))$ if and only if $P\cong Q \oplus R$ for some $R-$module $Q$.
\end{conjecture}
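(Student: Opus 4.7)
The plan is to encode the splitting question motivically. A rank $d-1$ projective module $P$ splits off a free factor iff it contains a unimodular element, iff the classifying map $\Spec R \to BGL_{d-1}$ lifts up to $\A^1$-homotopy along $BGL_{d-2} \to BGL_{d-1}$, whose $\A^1$-homotopy fiber is the motivic sphere $\A^{d-1}\setminus 0$. Since the latter is $(d-3)$-$\A^1$-connected by Morel and $\Spec R$ has Nisnevich cohomological dimension at most $d$, running the obstruction theory along the Postnikov tower of the fiber places the sole obstruction in the twisted group
\[
H^{d-1}_{\nis}\bigl(\Spec R;\, \piA_{d-2}(\A^{d-1}\setminus 0)(\det P^\vee)\bigr) \;=\; H^{d-1}\bigl(R;\, \sKMW_{d-1}(\det P^\vee)\bigr),
\]
where the identification uses Morel's computation $\piA_{d-2}(\A^{d-1}\setminus 0)\cong \sKMW_{d-1}$. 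This class is the Euler class $e(P)$ in the sense of Morel--Barge--Fasel.

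Next I would compare $e(P)$ with $c_{d-1}(P)$. The short exact sequence of strictly $\A^1$-invariant sheaves
\[
0 \to \mathbf{I}^d(\det P^\vee) \to \sKMW_{d-1}(\det P^\vee) \to \sKM_{d-1} \to 0,
\]
together with Rost's Bloch-type identification $H^{d-1}(R;\sKM_{d-1}) = \CH^{d-1}(R)$, matches the image of $e(P)$ under the right-hand projection with the classical top Chern class $c_{d-1}(P)$ up to sign. Hence $c_{d-1}(P) = 0$ iff $e(P)$ lifts to a class in $H^{d-1}(R;\mathbf{I}^d(\det P^\vee))$, and the full conjecture reduces to establishing
\[
H^{d-1}_{\nis}\bigl(\Spec R;\, \mathbf{I}^d(\det P^\vee)\bigr) = 0
\]
for every smooth affine $d$-fold $R$ over an algebraically closed field and every line bundle twist.

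For this vanishing the natural strategy is to filter $\mathbf{I}^d$ by its higher powers $\mathbf{I}^{d+k}$, use Milnor's identification $\mathbf{I}^n/\mathbf{I}^{n+1} \cong \sKM_n/2$ to rewrite the graded pieces in terms of mod-$2$ Chow groups, and exploit that the Witt ring of an algebraically closed field of characteristic $\ne 2$ is $\Z/2$, so that fundamental ideal powers are extremely sparse; combined with the divisibility available over $\bar k$, one can hope the resulting successive extensions all collapse in top degree. The principal obstacle, and the reason the full conjecture remains open, is the determinantal twist $\det P^\vee$: a non-trivial line-bundle twist genuinely alters Witt-sheaf cohomology, and there is no formal device that reduces twisted vanishing to the untwisted case. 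This is almost certainly why the authors settle for the symplectic analog, where symplectic bundles carry canonically trivialized determinants so the troublesome twist disappears; extending their top-degree vanishing arguments to cover arbitrary determinantal twists is the step that would require genuinely new input to settle Murthy's conjecture itself.
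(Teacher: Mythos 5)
This statement is a \emph{conjecture}, and the paper does not prove it; it is explicitly recorded as open in general and settled only for $d=3,4$ (Asok--Fasel) and in characteristic $0$ (by combining \cite{AF12}, \cite{AF14} and \cite{ABH23}). Your proposal correctly sets up the obstruction-theoretic framework: the splitting is equivalent to lifting $\xi: \Spec R\to BGL_{d-1}$ along $BGL_{d-2}\to BGL_{d-1}$ whose homotopy fiber is $\A^{d-1}\setminus 0$, and the primary obstruction in $H^{d-1}(X;\KMW_{d-1}(\det P^\vee))$ is the Euler class, which maps to $c_{d-1}(P)$ under $\KMW_{d-1}\to\KM_{d-1}$.

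However, your proposal contains a concrete error in diagnosing where the difficulty lies, and that error leads you to the wrong bottleneck. You state that running the Postnikov tower ``places the sole obstruction'' in degree $d-1$. That is false: since $X$ has Nisnevich cohomological dimension $d$, there is also a \emph{secondary} obstruction living in $H^{d}(X;\piA_{d-1}(\A^{d-1}\setminus 0))$ (twisted, as appropriate), which arises at the next stage of the Moore--Postnikov tower after one has killed the Euler class. This secondary obstruction group is precisely the object whose vanishing is the hard content; it is exactly the quantity addressed in the paper's Proposition~\ref{Vanishing:ABH} (which imports the characteristic-$0$ vanishing $H^d(X,\piA_{d-1}(\A^{d-1}-0))=0$ from \cite{ABH23}) and, for $d\geq 5$, the torsion estimate in Proposition~\ref{torsion} and the conditional vanishing in Proposition~\ref{Q-vanish-mod-conj}. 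Because $\piA_{d-1}(\A^{d-1}-0)$ is not a fully computed sheaf (the Asok--Fasel Conjecture~\ref{AF-conj} is about it), this degree-$d$ group is the genuine unknown. By contrast, the reduction you flag as the ``principal obstacle'' --- showing $H^{d-1}(X;\mathbf{I}^d(\det P^\vee))=0$ so that $e(P)=0\Leftrightarrow c_{d-1}(P)=0$ --- is actually \emph{not} the obstacle: over an algebraically closed field $k$ one has $cd_2(k)=0$, and \cite[Proposition~5.2]{AF14} gives the vanishing of $H^{d-1}(X;\mathbf{I}^d(L))$ and $H^{d}(X;\mathbf{I}^d(L))$ for any line bundle twist $L$, so the determinantal twist causes no trouble there. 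In short, your account omits the secondary obstruction entirely and misattributes the remaining difficulty to the twist; the actual open step (in positive characteristic) is the vanishing of $H^{d}(X;\piA_{d-1}(\A^{d-1}-0))$, which is why the paper's own contributions concentrate on exactly that kind of top-degree computation.
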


Murthy's splitting conjecture remains open in general. However when $d=3,4$, the conjecture has been solved affirmatively by Asok--Fasel (see \cite{AF15}, \cite{AF14}) and recently it is resolved in the positive when the field $k$ is of characteristic $0$ by combining the works of \cite{AF15}, \cite{AF14} and \cite{ABH23}. 

In this paper the objective is two-fold. In the first, we study the symplectic modules \ie projective modules with a symplectic form (see~\Cref{def:symp-mod}) and offer the following two results which are analogous to  Fasel's theorem and  Murthy's splitting conjecture in the context of symplectic modules. Secondly, we address a question of Das \cite[Question 7.1]{MD21} which connects Euler class groups with Chow groups.

Let  $\#H(R)$ denote the symplectic hyperbolic plane over a ring $R$ defined by rank 2 free $R$-module $R\oplus R$ endowed with the bilinear form associated to the $2\x 2$-matrix $\left(\begin{smallmatrix}0 & 1 \\ -1 & 0 \end{smallmatrix}\right)$.
Also, let us denote by $V_{2m}^{\Sp}(X)$ the set of isomorphism classes of symplectic $R-$modules of rank $2m$ for any $m\geq 1$, where $X=\Spec R$.
\begin{theorem}(Theorem \ref{cancel2n-2})
({\it Corank 2 symplectic cancellation})
     Let $X=\Spec R$ be a smooth affine scheme of dimension $2n$ ($n\geq 1$) over an algebraically closed field $k$. Let $(2n)!$ be invertible in $k.$ Then the map 
    $$V_{2n-2}^{\Sp}(X)\to V_{2n}^{\Sp}(X)$$ sending $[P]\mapsto [P\perp \#H(R)]$ is injective. More explicitly, if two symplectic rank $(2n-2)-$projective modules $E$ and $E'$ are stably isomorphic \ie $E \oplus \#H(R)\simeq E'\oplus \#H(R)$, then  $E \simeq E'$. 
\end{theorem}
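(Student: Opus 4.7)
The plan is to rephrase the cancellation question inside the $\A^1$-homotopy category and to reduce it to the vanishing of a top Nisnevich cohomology group with coefficients in a higher $\A^1$-homotopy sheaf of the motivic fiber $Sp_{2n}/Sp_{2n-2}$, in parallel with Fasel's strategy for the analogous projective cancellation in the critical range. First, by the known Nisnevich-local representability of symplectic bundles on smooth affine schemes, one has a natural bijection
\[
V_r^{Sp}(X)\cong [X,BSp_{2r}]_{\A^1},
\]
under which the stabilization $[P]\mapsto [P\perp H(R)]$ corresponds to the map induced by the standard inclusion $Sp_{2n-2}\hookrightarrow Sp_{2n}$. Thus the theorem translates into the injectivity of the induced map of pointed sets $[X,BSp_{2n-2}]_{\A^1}\to [X,BSp_{2n}]_{\A^1}$.

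The main input is the $\A^1$-fiber sequence
\[
Sp_{2n}/Sp_{2n-2}\to BSp_{2n-2}\to BSp_{2n}
\]
together with the identification of the fiber. The natural $Sp_{2n}$-action on $\A^{2n}\setminus\{0\}$ is transitive with isotropy an extension of $Sp_{2n-2}$ by a unipotent (hence $\A^1$-contractible) subgroup, and therefore $Sp_{2n}/Sp_{2n-2}\simeq_{\A^1}\A^{2n}\setminus\{0\}$. This is a motivic sphere of $\A^1$-connectivity $2n-2$ whose first non-trivial $\A^1$-homotopy sheaf is $\piA_{2n-1}(\A^{2n}\setminus\{0\})=\sKMW_{2n}$ by Morel. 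From the resulting long exact sequence of pointed $\A^1$-homotopy classes, the injectivity of the stabilization is equivalent to the triviality of the connecting map $[X,Sp_{2n}/Sp_{2n-2}]_{\A^1}\to V_{2n-2}^{Sp}(X)$.

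I then dismantle this connecting map layer by layer using the Postnikov tower of the $\A^1$-nilpotent space $Sp_{2n}/Sp_{2n-2}$. Since $X$ is smooth of Krull (hence Nisnevich cohomological) dimension $2n$, only Nisnevich cohomology in degree at most $2n$ can contribute; combined with the $(2n-2)$-connectivity of the fiber, only the Postnikov stages at $k=2n-1$ and $k=2n$ are relevant. The $k=2n-1$ layer involves cohomology with coefficients in $\sKMW_{2n}$, which under the algebraic closure hypothesis reduces via the comparison $\widetilde{CH}^{2n}(X)\cong CH^{2n}(X)$ to Chow-theoretic information that is already controlled by the symplectic Euler class formalism used elsewhere in the paper. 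The $k=2n$ layer, the decisive one, is precisely the top Nisnevich cohomology $H^{2n}(X,\piA_{2n}(Sp_{2n}/Sp_{2n-2}))$, whose vanishing is the principal technical result announced in the abstract.

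The main obstacle is this last vanishing. It is where both hypotheses enter decisively: invertibility of $(2n)!$ in $k$ clears the torsion in the higher $\A^1$-homotopy sheaves of $\A^{2n}\setminus\{0\}$, whose computation passes through Hopf-type exact sequences due to Morel and Asok--Fasel, while algebraic closure of $k$ collapses Chow--Witt invariants onto ordinary Chow groups and allows inductive vanishing arguments for the coefficient sheaves to be carried through. Once the top cohomology vanishing is in hand, a formal unwinding of the Postnikov tower forces the connecting map to be trivial, yielding the claimed cancellation.
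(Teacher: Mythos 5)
Your overall architecture is correct: translate to $[X, BSp_{2n-2}]_{\A^1}\to [X,BSp_{2n}]_{\A^1}$ via affine representability, identify the fiber with $\A^{2n}\setminus 0$ (the paper prefers the stable fiber $F'_{n-1}$ of $BSp_{2n-2}\to BSp$, but these are linked by an epimorphism on $\piA_{2n}$, so this is a cosmetic choice), and then work up the Moore–Postnikov tower, noting that by connectivity and the cohomological bound $\dim X=2n$ only the stages in degrees $2n-1$ and $2n$ matter. The vanishing of $H^{2n}(X,\piA_{2n}(\A^{2n}-0))$ via Fasel under the $(2n)!$-invertibility hypothesis does indeed dispose of the top stage, exactly as you say.

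However, you have misplaced the real difficulty, and this is a genuine gap. You describe the $H^{2n}$ stage as ``the decisive one'' and assert that the degree $2n-1$ layer, which carries coefficients in $\KMW_{2n}$, is ``already controlled by the symplectic Euler class formalism.'' That is not correct. The relevant group at that stage is $H^{2n-1}(X,\KMW_{2n})\cong H^{2n-1}(X,\KM_{2n})$, and this group is typically \emph{nonzero}; no Euler-class vanishing applies. Moreover, injectivity of $[X,E_{2n-1}]_{\A^1}\to [X,BSp]_{\A^1}$ is not the same as triviality of a connecting map: one must show, for an arbitrary lift $\xi$, that the element $g\in H^{2n-1}(X,\KMW_{2n})$ distinguishing two lifts over a common class lies in the image of the twisted connecting map $\Delta(b_n,\xi)$ (the stabilizer of $\xi$ under the $[X,\Omega BSp]$-action). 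The paper's proof of this claim is the technical heart of the theorem and does not reduce to a cohomology vanishing: it relies on the explicit formula $\Delta(b_n,\xi)(\beta)=(\Omega b_n)(\beta)+\sum_r(\Omega b_r)(\beta)\cdot b_{n-1-r}(\beta)$, on pushing $\beta$ along the hyperbolic map $H\colon SL\to Sp$ and the forgetful map $F\colon Sp\to SL$ to get $FH=2$ on $SK_1$, on cohomology computations for the quadric $Q_{4n-1}$ that collapse the correction terms, on the $\A^1$-Suslin-matrix map $\beta_{2n}\colon Q_{4n-1}\to SL_{2^{2n-1}}$, on the identification of $\Omega b_n$ with $\pm\Omega c_{2n}$ through the Chow–Witt/Chow comparison (Proposition on $\Omega$ up to sign), and crucially on unique $(2n-1)!$-divisibility of $H^{2n-1}(X,\KM_{2n})$ over an algebraically closed field. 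None of this machinery appears in your sketch, and without it there is no reason the $2n-1$ stage is harmless. In short, you have correctly identified the scaffolding but have assumed away the lemma that the paper actually has to prove; as written the proposal does not constitute a proof.
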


\begin{theorem} (Theorem \ref{thm:splitting})
({\it Lower Corank symplectic splitting})
Let $R$ be a smooth affine scheme over a field $k$ and $X=\Spec (R)$.  Let $P$ be a symplectic $R-$module of rank $2n$ ($n\geq 1$).

$(1)$ Assume that $\dim(R)=2n$. Then $P\cong Q \oplus \#H(R) $ if and only if the Euler class $e_{2n}(P)=0$ in $\widetilde{\CH}^{2n}(X)$.

$(2)$ Assume that $\dim(R)=2n+1$, and $\coh^{2n+1}(X, \piA_{2n}(\mathbb{A}^{2n}\setminus\{0\}))=0.$ Then $P\cong Q \oplus \#H(R) $ if and only if the Euler class $e_{2n}(P)=0$ in $\widetilde{\CH}^{2n}(X)$.
\end{theorem}

The cancellation of symplectic modules has been studied by Bass, Bhatwadekar \cite{Bh01} and 
Du \cite{PD22}. The approach we take throughout this paper is to exploit the obstruction theoretic machinery developed by Morel-Voevodsky~\cite{MV99} and Morel~\cite[Appendix B]{Mor12} which reduces the lifting problems to understanding Nisnevich cohomology groups of homotopy sheaves. This strategy is successfully deployed by Asok-Fasel~ \cite{AF15}, \cite{AF14} and also Du \cite{PD22} and Asok-Bachmann-Hopkins~\cite{ABH23} among others in a series of works to study the classification/splitting/enumeration problems for projective modules on smooth affine algebras over a field. In this paper, we use the vanishing theorem of Fasel~\cite{Fas24} at crucial places and in addition to that use the following result that lists the conditions under which the top cohomology groups vanish of certain homotopy sheaves of quadrics on smooth affine variety. We believe that the following result is of independent interest and would serve a purpose in other obstruction theoretic arguments.

For a field $k$, let us recall the affine quadrics (for each $n\geq 1$), an odd quadric $$Q_{2n-1}=\Spec \dfrac{k[x_1, y_1, \dots, x_{n}, y_n]}{\sum_{i=1}^{n} x_iy_i-1} $$
and even quadric $$Q_{2n}=\Spec \dfrac{k[x_1, y_1, \dots, x_{n}, y_n, z]}{\sum_{i=1}^{n} x_iy_i-z(1-z)}.$$

We consider the 2nd non-trivial $\#A^1-$homotopy sheaf of the even quadrics $Q_{2d-2}$ (for $d\geq 2$) and consider the top cohomology of the sheaves $\piA_d({Q_{2d-2}})$ on a smooth affine $k-$scheme $X$ of dimension $d$.  
\begin{theorem} \label{vanishing-intro} (Theorem \ref{vanishing})
Let $X$ be a smooth affine scheme of dimension $d$ over an algebraically closed field $k$ of characteristic $\neq 2$. Then
\begin{enumerate}
\item[$(1)$] if $d=2$,  $\coh^2(X, \piA_2(Q_{2}))=0.$ 

\item[$(2)$] if $d=3$, $\coh^3(X, \piA_3(Q_{4}))=0$ provided $k=\-{\F}_p$ (prime $p\neq 2$). 

\item[$(3)$] if $d=4$, $\coh^4(X, \piA_4(Q_{6}))=0.$

\item[$(4)$] if $d\geq 5$, $\coh^d(X,\piA_d(Q_{2d-2}))=0,$ provided $char(k)=0$.
\item[$(5)$] if $d\geq 5$, then $\coh^{d}(X, \piA_{d}(Q_{2d-2}))$ is $d!\cdot (d-1)!\cdot (d-2)!-$torsion.
\item[$(6)$] if $d\geq 5$ and $d!\in k^{\x}$, then $\coh^{d}(X, \piA_{d}(Q_{2d-2}))$ is $ (d-1)!\cdot (d-2)!-$torsion.
\item[$(7)$] for $d\geq 5$, $\coh^d(X,\piA_d(Q_{2d-2}))=0,$ provided Conjecture~\ref{AF-conj} (due to Asok-Fasel) holds.
\end{enumerate}

\end{theorem}

\begin{remark} The following are some comments about the above results.
\noindent
\begin{enumerate}
    \item The case $d=2$ complements the result proved in \cite[Theorem 3]{Fas24} as one notices that $\piA_2(Q_2)\simeq  \piA_2(\mathbb{A}^2\setminus \{0\})$.
 
 \item Conjecture \ref{AF-conj} roughly says that the Nisnevich sheaf $\piA_{d-1}(\A^{d-1}\backslash \{0\})$ is sandwiched between two known sheaves $\KM_{d+1}/24$ and $\&{GW}^{d-1}_d$. 

    \end{enumerate}
\end{remark}

 Perhaps unsurprisingly, the above~Theorem~\ref{vanishing-intro} has application in addressing the following question posed by Mrinal Das \cite{MD21} about the comparison map between the Euler class group (see for definition, Section \ref{Euler-Chow}, second paragraph) and the Chow group (\cite{Fulton}).
 
 \begin{question} (\cite[Question 7.1]{MD21})\label{Q-MD}  Let $p\neq 2$ be a prime and let $k=\-{\F}_p$. Let $X=\Spec R$ be a smooth affine scheme of dimension $d\geq 4$.
Is the natural map $$\psi: \E^{d-1}(R)\to \CH^{d-1}(X)$$ an isomorphism?
\end{question}
We note that the question was answered in the case $d=3$ in \cite[Theorem 7.2]{MD21}.
We have the following result which associates the positive answer to Question~\ref{Q-MD} with the vanishing of certain  cohomology groups.

\begin{theorem} (Corollary \ref{cor:iso})
Let $k$ be an algebraically closed field of characteristic $\neq 2$.
Let $X=\Spec A$ be a smooth affine $k-$scheme of dimension $d\geq 4$.

Consider the map $$\psi: \E^{d-1}(A)\to \CH^{d-1}(X).$$ Then the following statements hold. 

\begin{enumerate}
    \item[$(1)$] The map $\psi$ is always surjective.

 \item[$(2)$] If $d=4$, then the map $\psi$ is an isomorphism. 
 \item[$(3)$] If further characteristic of $k$ is $0$, then the map $\psi$ is an isomorphism. 
\item[$(4)$]  If $d\geq 5$, then the kernel of the map $\psi$ is $d!\cdot (d-1)!\cdot(d-2)!-$torsion.
         \item[$(5)$] If $d\geq 5$ and $d!\in k^{\x}$, then the kernel of the map $\psi$ is $(d-1)!\cdot(d-2)!-$torsion.
\item[$(6)$] Further for $d\geq 5$, if Conjecture \ref{AF-conj} holds, (which is the case when characteristic of $k=0)$  then the map $\psi$ is an isomorphism.
\end{enumerate}
\end{theorem}

\subsection{Acknowledgement}
The authors thank the referee for carefully going through the paper and for suggestions/corrections which improved the readability of the paper.
We thank Jean Fasel for his comments on the earlier draft of the paper.
The author RP was supported by UMPA, ENS de Lyon, and was supported by the French ANR project: Motivic homotopy, quadratic invariants and diagonal classes (ANR-21-CE40-0015) for major part of the work and also thanks Harish-Chandra Research Institute, Prayagraj and Kerala School of Mathematics, Kerala for support during the conclusion of the work. The author HPS would like to thank the Isaac Newton Institute for Mathematical Sciences, Cambridge, for support and hospitality during the programme [K-theory, algebraic cycles and motivic homotopy theory (KAH2) (2022)] where some parts of the work was carried out. This work was supported by EPSRC grant no EP/R014604/1. HPS also thanks N.B.H.M., Govt. of India for the grant No.02011/22/2023NBHM(R.P)/R $\And$ D II/, Dt.09-05-2023.

\section{Preliminaries}
In this section, we recollect some basic notions and results that will form the framework for the results proved in this paper. We refer the reader to 
\cite{Mor12}, \cite{MV99} and \cite{AF15}
for further details.

\subsection{Motivic homotopy theory}
First, we recall some basic notions in the homotopy theory.

Let $S$ be a Noetherian scheme of finite Krull dimension. 
$\Sm/S$ denotes the category of finite type smooth $S-$schemes. Let $(\Sm/S)_{\nis}$ denote the site with the Nisnevich topology on the category $\Sm/S$. The category of simplicial Nisnevich sheaves $\Spc_S:=\simps(\Sm/S)_{\nis}$ is a simplicial model category with the model structure as in~\cite[Theorem 1.4]{MV99}. By \cite[section 3.2, page 105]{MV99}, the above simplicial model category   $\Spc^{\A^1}_S:=\simps(\Sm/S)_{\nis}$ is endowed with the $\A^1-$model category structure. An object of $\Spc^{\A^1}_S$ is simply referred to as a \textit{Space}. Inverting the $\A^1-$weak equivalences in $\Spc^{\A^1}_S$, the resulting homotopy category is the homotopy category of smooth schemes over $S$ denoted by $\@H^{\A^1}(S).$ 

For the current paper, we take $S=\Spec k$ for a perfect field $k$ (unless stated otherwise) and denote $\@H^{\A^1}(k):= \@H^{\A^1}(\Spec k)$ for the $\A^1-$homotopy category of the field $k.$ For any object $\@X, \@Y\in \Spc^{\A^1}_k$, we denote $[\@X, \@Y]_{\A^1}:=\Hom_{\@H^{\A^1}(k)}(\@X, \@Y)$. For a smooth $k-$scheme $X$, we denote by $X$ itself the corresponding representable object $\Hom_{\Sm/k}(-, X)$ in $\Spc^{\A^1}_k$. Similarly, we can define the category of pointed spaces $Spc^{\A^1}_{\Dot, k}$, and the resulting pointed  $\A^1-$homotopy category $\@H^{\A^1}_{\Dot}(k)$. The homotopy category has bi-graded spheres that are obtained by the smash product of various copies of the simplicial circle $S^1$ and the algebraic circle $\#G_m=\#A^1\setminus\{0\}$ pointed at 1.
\medskip
\paragraph{\bf Conventions:} Throughout the paper, the sheaves of abelian groups $M$ are with respect to the Nisnevich topology and the associated cohomology groups $\coh^*(X, M)$ are the sheaf cohomology groups with respect to the Nisnevich topology on the corresponding scheme $X$.

\begin{definition}(\cite[Page 110]{MV99}) Let $(\@X, x)$ be a pointed space.
  For $n\geq 1$, $\piA_n(\@X, x)$ denotes the $n^{th}$ $\A^1-$homotopy sheaf defined as the Nisnevich sheafification of the presheaf $$U\mapsto [S^n\wedge U_+, (\@X, x)]_{\A^1}$$ where $U\in \Sm/k$. 
\end{definition}    
\begin{remark}
If the base field $k$ is perfect, for a pointed space $\@X$,  $\piA_1(\@X)$ is a sheaf of groups that is strongly $\A^1-$invariant and for $n\geq 2$, $\piA_n(\@X)$ is a strictly $\A^1-$invariant sheaf of abelian groups (see~\cite[Theorem 5.1, Theorem 4.47]{Mor12}).
\end{remark}

\subsection{Fiber sequences}\label{sec:fib-seq}
In this section, we recall the basics of fiber sequences and notions that will be used in the \Cref{sec:cancel-and-split}. For a thorough and detailed exposition we refer the reader to~\cite[subsection 2.1]{Fas24}.
To make the exposition self-contained we recall some relevant details.  

Let \begin{equation}\label{eq:fiberseq}
(F, f)\xrightarrow{p} (E, e)\xrightarrow{q} (B, b)
\end{equation} be a fiber sequence of pointed spaces. 
Here we suppose that $B$ is a fibrant space and $q$ is a fibration. 

Given a pointed space $\@X$, we have an exact sequence 
\begin{equation}\label{eq:long-exact-fiber}
    [\@X, \Omega(E, e)]_{\A^1} \xrightarrow{\Delta} [\@X, \Omega(B, b)]_{\A^1} \to [\@X, (F, f)]_{\A^1} \xrightarrow{p_*} [\@X, (E, e)]_{\A^1} \xrightarrow{q_*} [\@X, (B, b)]_{\A^1} 
    \end{equation} 
of pointed sets pointed at the base point $\@X\to *\xrightarrow{f} F$.
Here $\Omega(\@Y, y)$ is the loop space of a pointed space $(\@Y, y)$.
The exactness at $[\@X, (F, f)]_{\A^1}$ implies that the subgroup $\im(\Delta)$ of $[\@X, \Omega(B, b)]_{\A^1}$ is the stabilizer of the base point in $[\@X, (F, f)]_{\A^1}$. Thus, the inverse image under the map $p_*$ of the base point $\ast\in [\@X, (E, e)]_{\A^1} $ is given by the bijection of pointed sets $$(p_*)^{-1}(*)\simeq\dfrac{[\@X, \Omega(B, b)]_{\A^1}}{\im(\Delta)}$$ 
 where the quotient set is the set of orbits under the subgroup $\im (\Delta)$.
 The point to note is that the above description is for the inverse image of the base point $\@X\to *\xrightarrow{f} F$, but this doesn't describe $(p_*)^{-1}(\alpha)$ for any $\alpha\in [\@X, (F, f)]_{\A^1}.$  

However, if we assume that $(E, e)$ is an abelian group object in $\@H^{\A^1}_{\Dot}(k)$, then one can describe the inverse image $(p_*)^{-1}(\alpha)$ for any $\alpha\in [\@X, (F, f)]_{\A^1}.$ 
Here $(E, e)$ is an abelian group object \ie there exists a binary operation $$m: (E, e)\x (E, e)\to (E, e)$$ and an inverse operation $$i:(E, e)\to (E, e) $$ satisfying standard properties where the commutativity of diagrams in those properties hold in $\@H^{\A^1}_{\Dot}(k)$. 

Applying $\Rmap_*(\@X,-)$ to the fiber sequence, we have a fiber sequence of simplicial sets 
$$\Rmap_*(\@X, (F, f))\xrightarrow{p_*} \Rmap_*(\@X, (E, e))\xrightarrow{q_*} \Rmap_*(\@X, (B, b)).$$
Here $\Rmap_*(-, -)$ is a derived (pointed) mapping space. More explicitly since $\@X$ is cofibrant and spaces $E, F, B$ are fibrant, $\Rmap_*(-, -)$ is given by the simplicial function object $\Hom(-\x\Delta^{\Dot}, -)$ (see \cite[\S2.1, page 47]{MV99}). 
Let $\gamma\in \Rmap_*(\@X, (F, f))_0$ be a 0-simplex of the mapping space. Let  $\Rmap_*(\@X, (F, f))$ be pointed at $\gamma$ and $\Rmap_*(\@X, (E, e))$ be pointed at $p_*(\gamma)$. Consider the fiber sequence 
$$(\Rmap_*(\@X, (F, f)), \gamma)\xrightarrow{p_*} (\Rmap_*(\@X, (E, e)), p_*(\gamma))\xrightarrow{q_*} (\Rmap_*(\@X, (B, b)), *)$$ and the associated exact sequence 
$$\xymatrix{
\pi_1(\Rmap_*(\@X, (E, e)), p_*(\gamma))\ar[r] &\pi_1(\Rmap_*(\@X, (B, b)), *)\ar[r] & \pi_0(\Rmap_*(\@X, (F, f)), \gamma) \ar[d]^{p_*}\\
& & \pi_0(\Rmap_*(\@X, (E, e)), p_*(\gamma))
}$$
Since $(E, e)$ is an abelian group object with $m: (E, e)\x (E, e)\to (E, e)$, consider the map $$\xymatrix@C=3em{
(\Rmap_*(\@X, (E, e)), *) \ar[r]^-{Id \x p_*(\gamma)} \ar[rd]_-{m\circ (-, p_*(\gamma))}& (\Rmap_*(\@X, (E, e)\x (E, e)), (*, p_*(\gamma)))\ar[d]^{m} \\
& (\Rmap_*(\@X, (E, e)), p_*(\gamma)) 
}$$
Here the composite map of spaces $$m\circ (-, p_*(\gamma)): (\Rmap_*(\@X, (E, e)), *)\to (\Rmap_*(\@X, (E, e)), p_*(\gamma))$$ translating along base point $p_*(\gamma)$ induces the map 
$$T_{p_*(\gamma)}:=(m\circ (-, p_*(\gamma)))_*: \pi_1(\Rmap_*(\@X, (E, e)), *)\to \pi_1(\Rmap_*(\@X, (E, e)), p_*(\gamma)).$$
Now one can describe the inverse image of $p_*(\gamma)$.    
$$p_*^{-1}(p_*(\gamma))=\pi_1(\Rmap_*(\@X, (B, b)), *)/T_{p_*(\gamma)}(\pi_1(\Rmap_*(\@X, (E, e)), *)).$$
We will be using this crucially in the proof of ~\Cref{cancel2n-2}.

\subsection{Moore-Postnikov tower of a fiber sequence} The main tool we will be using in the paper is the existence of the Moore-Postnikov tower for a given fiber sequence in the homotopy category.

\begin{proposition}(\cite[Proposition 3.4]{PD22})\label{MP-tower} Let $k $ be a perfect field. Let $F\to E\xrightarrow{q} B$ be an $\A^1-$fiber sequence where $F, E, B$ are pointed $\A^1-$fibrant connected spaces. Let $G:=\piA_1(B)$. Then for each $n\geq 0$, there are fibrant connected spaces $E_n$ together with morphisms $q_n: E_n\to E_{n-1}$, $i_n: E\to E_n$ and, $p_n: E_n\to B$ forming (upto homotopy) a commutative diagram $$\xymatrix{
& \ar@{-->}[d] &\\
& E_{n+1} \ar[d]^{q_{n+1}} \ar[rd]^{p_{n+1}} & \\
E \ar[ru]^{i_{n+1}} \ar[r]^{i_n} \ar[rd]_{i_{n-1}} & E_n \ar[r]^{p_{n}} \ar[d]^{q_{n}} & B\\
& E_{n-1} \ar@{-->}[d]^{q_{n-1}} \ar[ru]_{p_{n-1}} & \\
& &
}$$ 
    such that 
    \begin{enumerate}
        \item $p_0: E_0\to B$ is an $\A^1-$weak equivalence.
        \item for each $n\geq 0$, $p_n\circ i_n=q$.
        \item The morphism $q_n$ is an $\A^1-$fibration with homotopy fiber as the Eilenberg-MacLane space $\K(\piA_n(F), n)$. Furthermore, for each $n\geq 2,$ there is (up to homotopy) a unique map $E_{n-1}\to \K(\piA_n(F), n+1)$ such that the diagram $$\xymatrix{
        E_n \ar[r] \ar[d]_{q_n} & {\rm BG} \ar[d] &\\
         E_{n-1} \ar[r]  & K^G(\piA_n(F), n+1) 
        }$$ is a homotopy pullback square. 
        \item The morphism $E\to \hlim\limits_{n} E_n$ induced by the maps $\{i_n: E\to E_n\}_{n\geq 0}$ is an $\A^1-$weak equivalence.
    \end{enumerate}
    
\end{proposition}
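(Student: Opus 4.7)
The plan is to adapt the classical Moore--Postnikov tower construction to the pointed $\mathbb{A}^1$-homotopy category, following the strategy of Morel~\cite[Appendix B]{morel12}. The main technical input is that for $n \geq 1$ the sheaves $\piA_n(F)$ are strongly (and for $n \geq 2$ strictly) $\A^1$-invariant and carry a natural action of $G = \piA_1(B)$, so the relevant twisted Eilenberg--MacLane spaces $K^G(\piA_n(F), n+1)$ exist as $\A^1$-fibrant objects in $\mathcal{H}^{\A^1}_{\Dot}(k)$. The tower will then be built by successively killing the homotopy sheaves of the fiber $F$, just as in the classical case, but with all constructions performed in an $\A^1$-local fashion.

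First, I would replace $q: E \to B$ by an $\A^1$-fibration between $\A^1$-fibrant objects and set $E_0 := B$, so $p_0 = \mathrm{id}_B$ trivially satisfies $(1)$. For the inductive step, suppose $q_{n-1}: E_{n-1} \to B$ with factorization $q = p_{n-1} \circ i_{n-1}$ has been constructed such that the homotopy fiber of $p_{n-1}$ has vanishing $\A^1$-homotopy sheaves in degrees $\geq n$. The homotopy fiber sequence $F \to E \to B$ shows that the obstruction to producing the next stage is measured by a class in $H^{n+1}_{\mathrm{Nis}}(E_{n-1}, \piA_n(F))$ with the twisted $G$-action. Concretely, I would construct the classifying map $E_{n-1} \to K^G(\piA_n(F), n+1)$ over $BG$ representing this class and define $E_n$ as the homotopy pullback
\[
\xymatrix{
E_n \ar[r] \ar[d]_{q_n} & BG \ar[d] \\
E_{n-1} \ar[r] & K^G(\piA_n(F), n+1).
}
\]
Since fibrations are stable under homotopy pullback, $q_n$ is an $\A^1$-fibration, and the homotopy fiber of $BG \to K^G(\piA_n(F), n+1)$ is $K(\piA_n(F), n)$, which gives $(3)$. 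The factorization $i_n: E \to E_n$ is produced by the universal property of the pullback, using the existing factorization $i_{n-1}$ together with the natural lift of $q$ coming from the vanishing of the obstruction class on $E$; this yields $(2)$.

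The main obstacle is establishing the homotopy pullback square in $(3)$ with the correct twisted coefficients, which amounts to a motivic version of the classification of principal fibrations. I would leverage Morel's theory of $G$-equivariant strictly $\A^1$-invariant sheaves together with the universal property of $K^G(\piA_n(F), n+1)$ as the classifying space for $n$-torsors under $\piA_n(F)$ with compatible $G$-action. The uniqueness of the classifying map follows from the representability of Nisnevich cohomology with twisted coefficients by mapping into such equivariant Eilenberg--MacLane objects. One must verify carefully that the inductive construction stays within the $\A^1$-local world; this is where the strict $\A^1$-invariance of $\piA_n(F)$ for $n \geq 2$ and the strong $\A^1$-invariance for $n = 1$ become indispensable, as they ensure the $K^G(-, n+1)$-construction is $\A^1$-fibrant.

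Finally, for $(4)$, I would check that the natural map $E \to \hlim_n E_n$ induces isomorphisms on all $\A^1$-homotopy sheaves. By construction, for each $n$, the map $i_n: E \to E_n$ induces isomorphisms on $\piA_i$ for $i \leq n-1$ (and compatibility on $\piA_n$ via the pullback), so passing to the homotopy limit and using the Milnor-type exact sequence for $\hlim$ of the tower of fibrations $\{E_n\}$ gives the weak equivalence. The convergence uses that the spaces involved are connected and the homotopy sheaves of the fiber are controlled level-by-level, which is exactly the content of the inductive construction above.
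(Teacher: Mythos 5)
The paper states this Moore--Postnikov result without proof, treating it as recalled background attributed implicitly to Morel~\cite[Appendix B]{morel12}; your sketch reproduces exactly the standard construction from that source (inductive killing of homotopy sheaves of the fiber, twisted Eilenberg--MacLane $k$-invariants over $BG$, homotopy pullback squares, and a $\hlim$ convergence argument), so it is essentially the same approach. One small imprecision worth fixing: with the convention $E_0\simeq B$ and $\mathrm{hofib}(q_n)\simeq K(\piA_n(F),n)$, the map $i_n\colon E\to E_n$ induces isomorphisms on $\piA_i$ for $i\le n$ (not $i\le n-1$), and an epimorphism on $\piA_{n+1}$; your convergence argument in $(4)$ still goes through once this is corrected.
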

\begin{remark}
    In~\Cref{sec:cancel-and-split}, we apply Proposition~\ref{MP-tower} to the $\A^1-$fiber sequence $$F'_{n} \to \BSp_{2n}\to \BSp$$ for various natural numbers $n$ where $\BSp_{2n}$ denotes the classifying space associated to the group scheme $\Sp_{2n}$ and $\BSp=\dlim_{n} \BSp_{2n}$. Also, $F'_n$ denotes the homotopy fiber of the natural map $\BSp_{2n}\to \BSp.$ Note that $\piA_1(\BSp)=*$.  
\end{remark}

\subsection{Suslin matrices and motivic cohomotopy sets}\label{subsec:sus-matrices}
In this section we collect some important results about certain cohomotopy sets that will be used in~\Cref{sec:cancel-and-split}.

We recall for $r\geq 1$ $$Q_{2r-1}=\Spec \dfrac{k[x_1,\dots, x_{r}, y_1, \dots, y_r]}{\sum_{i=1}^{r} x_iy_i-1}.$$
For a $k-$algebra $R$, an $R$-valued point of the scheme $Q_{2r-1}$ is given by a tuple $(a, b)$ where $a=(a_1,\dots, a_r)\in R^{\oplus r}$ and $b=(b_1,\dots, b_r)\in R^{\oplus r}$ such that $\sum_{i=1}^r a_ib_i=1$ determined by $$x_i \mapsto a_i \ \text{and} \ y_i \mapsto b_i $$
for $i=1,\dots, r$.

Following~\cite[\S 5]{Suslin77} and~\cite[Chapter III, \S 7]{Lam06} Suslin constructed a map of $k-$schemes $$\alpha_{r}: Q_{2r-1} \to \SL_{2^{r-1}} \subset \GL_{2^{r-1}}  .$$ 
Up to elementary matrix operations on the rows and columns, Suslin reduced the matrix  $\alpha_{r}(a, b) \in \SL_{2^{r-1}}(R)$ for any $k-$algebra $R$ to $\beta_{r}(a, b)\in \SL_{r}(R)$, hence get a $k-$morphism of schemes
 $$\beta_{r}: Q_{2r-1} \to \SL_{r} \subset \GL_{r}  $$ \ie the following diagram 
$$\xymatrix{
& \SL_{r} \ar@{^(->}[d] \\
Q_{2r-1}\ar[r]^{\alpha_r} \ar[ru]^{\beta_r}& \SL_{2^{r-1}}
}$$
commutes on $R-$points up to elementary matrix operations on the set $\SL_{2^{r-1}}(R)$. The projection onto the first $r-$coordinates induces the map $Q_{2r-1}\to \#A^{r}\setminus\{0\}$ which is Zariski locally trivial fiber bundle with fiber $\#A^{r-1}$, hence is an $\A^1-$weak equivalence as noted in~\Cref{quadric-hom-type}. Consider the composite map in $\@H^{\#A^1}(k)$
$$\psi_{r}: \#A^{r}\setminus\{0\} \simeq_{\#A^1} Q_{2r-1}\xrightarrow{\beta_r}  \SL_{r}\xrightarrow{pr_1} \#A^{r}\setminus\{0\}  $$
where $pr_1: \SL_r\to \#A^{r}\setminus\{0\} $ is the projection sending an $r\x r-$square matrix to its first row.
More explicitly, the map $\psi_{r}: \#A^{r}\setminus\{0\}\to \#A^{r}\setminus\{0\}$ is given by $$(x_1, \dots, x_{r})\to (x_1^{(r-1)!}, \dots, x_{r}).$$
This induces for any $k-$smooth scheme $X$ the map on cohomotopy sets
$${\psi_{r}}_*: [X, \#A^{r}\setminus\{0\}]_{\#A^1}\to [X, \#A^{r}\setminus\{0\}]_{\#A^1}$$ sending $$[\alpha]\mapsto [\psi_{r}\circ \alpha].$$

In the following proposition, we record the assumptions under which the map ${\psi_r}_*$ is explicit. \begin{proposition}\label{prop:suslin-mult} Let $X$ be a smooth affine $k-$scheme of $\dim X \geq 2$.
If either of the following conditions hold \begin{enumerate}
    \item $2\leq \dim X\leq r-1$ 
    or \item $2\leq \dim X=r$, the field $k$ is algebraically closed and $r!\in k^\x$     
\end{enumerate} 
then under the map $${\psi_{r}}_*: [X, \#A^{r}\setminus\{0\}]_{\#A^1}\to [X, \#A^{r}\setminus\{0\}]_{\#A^1}$$ 
$${\psi_{r}}_*([a_1, \dots, a_{r}])=\dfrac{(r-1)!}{2}\cdot h\cdot ([a_1, \dots, a_{r}])=(r-1)!\cdot ([a_1, \dots, a_{r}]).$$
For the last equality, note that since the field $k$ is algebraically closed, $h=\langle1\rangle+\langle-1\rangle=2.$
\end{proposition}
 \begin{proof}
   For $2\leq \dim X \leq r-1 $, the reader can refer to the proof in~\cite[Page 14838 and Remark 6.12]{PD22}. 
   For the case $\dim X=r$, the proof follows along the same lines as in~\cite[Page 14838 and Remark 6.12]{PD22}, except that one uses~\cite[Theorem 3]{Fas24}, hence the assumptions on the field. 
For the convenience of the reader we provide the argument. 

Consider the map $w: \#A^{r}\setminus\{0\}\to \K(\KMW_r, r-1)$ (to the first non-trivial $(r-1)-$th term $\K(\KMW_r, r-1)$ of the Moore-Postnikov tower for the map $\#A^{r}\setminus\{0\}\to *.$) 
By~\cite[Lemma 4.5]{AF14-JTop} the isomorphism $$[\#A^{r}\setminus\{0\}, \K(\KMW_r, r-1)]_{\#A^1}\simeq \coh^{r-1}(\#A^{r}\setminus\{0\}, \KMW_r)$$ the map $w$ corresponds to cohomology class $[w]$ and  
the cohomology group   $\coh^{r-1}(\#A^{r}\setminus\{0\}, \KMW_r)$ is rank 1 free $\KMW_0(k)-$module generated by $[w]$.  

As noted in ~\cite[Remark 6.12]{PD22}, 
\begin{equation}\label{eq:univ-w}
    \psi_{r}^*([w])=\dfrac{(r-1)!}{2}\cdot h\cdot [w] \end{equation} 
    in $\coh^{r-1}(\#A^{r}\setminus\{0\}, \KMW_r).$    
Denote by $[a]: X\to \#A^{r}\setminus\{0\}$ the map associated to the tuple $(a_1, \dots, a_{r})$. As a consequence of~\eqref{eq:univ-w}, we have  
\begin{equation}\label{eq:suslinmap}
    w_*({\psi_r}_*([a]))=[w\circ \psi_r\circ a]=\psi_{r}^*([w])([a])=\bigg(\dfrac{(r-1)!}{2}\cdot h\cdot [w]\bigg)([a])= w_*\bigg(\dfrac{(r-1)!}{2}\cdot h\cdot[a]\bigg).
    \end{equation}
   Note that the induced map $$w_*: [X, \#A^{r}\setminus\{0\}]_{\#A^1}\to [X, \K(\KMW_r, r-1)]_{\#A^1}\simeq \coh^{r-1}(X, \KMW_r)$$ is isomorphism under our assumptions. To see this, recall that $$w:  \#A^{r}\setminus\{0\}\to \K(\KMW_r, r-1)$$ is the map to the first non-trivial $(r-1)-$th term $\K(\KMW_r, r-1)$ of the Moore-Postnikov tower for the map $\#A^{r}\setminus\{0\}\to *$  and we have the following fiber sequence
   \begin{equation}\label{seq:fiber2.4}
       \K(\piA_r(\#A^{r}\setminus\{0\}), r) \to (\#A^{r}\setminus\{0\})[r]\xrightarrow{q_{r}} \K(\KMW_r, r-1)
        \end{equation}
with commutative diagram 
$$\xymatrix{
& \#A^{r}\setminus\{0\}\ar[d]^{i_r} \ar[rd]^w &\\
\K(\piA_r(\#A^{r}\setminus\{0\}), r) \ar[r] & (\#A^{r}\setminus\{0\})[r]\ar[r]^{q_{r}} &\K(\KMW_r, r-1)
}$$
By~\cite[Appendix B]{Mor12}, it follows that since $\dim X=r$, the map
$${i_r}_*: [X, \#A^{r}\setminus\{0\}]_{\#A^1}\to [X, (\#A^{r}\setminus\{0\})[r]]_{\#A^1}$$
is an isomorphism. Consider the long exact sequence associated to the fiber sequence~\eqref{seq:fiber2.4}
$$[X, \K(\piA_r(\#A^{r}\setminus\{0\}), r)]_{\#A^1}\to [X, \#A^{r}\setminus\{0\}]_{\#A^1}\xrightarrow{w_*} [X, \K(\KMW_r, r-1)]_{\#A^1}. $$
  Note that  $[X, \K(\piA_r(\#A^{r}\setminus\{0\}), r)]_{\#A^1}\simeq \coh^r(X, \piA_r(\#A^{r}\setminus\{0\}))$ and by~\cite[Theorem 3]{Fas24}, since under our assumptions, the field $k$ is algebraically closed and $r!\in k^\x$, we have $\coh^r(X, \piA_r(\#A^{r}\setminus\{0\}))=0.$ Thus we deduce that the map $w_*: [X, \#A^{r}\setminus\{0\}]_{\#A^1}\to [X, \K(\KMW_r, r-1)]_{\#A^1} $
  is injective and by \eqref{eq:suslinmap}, we have 
  $${\psi_{r}}_*([a])=\dfrac{(r-1)!}{2}\cdot h\cdot [a]=(r-1)!\cdot ([a_1, \dots, a_{r}])$$
  in the cohomotopy set $[X, \#A^{r}\setminus\{0\}]_{\#A^1}$. Note that the last equality holds as $h=2$ under the assumption that $k=\-k.$ 
 \end{proof}

\section{Some vanishing results}\label{vanishing-sec}
In this section, we organize some cohomology vanishing results for certain homotopy sheaves of a particular class of spheres in the motivic homotopy category.

For a field $k$, let us recall the affine quadrics $$Q_{2n-1}=\Spec \dfrac{k[x_1, y_1, \dots, x_{n}, y_n]}{\sum_{i=1}^n x_iy_i-1} $$
and $$Q_{2n}=\Spec \dfrac{k[x_1, y_1, \dots, x_{n}, y_n, z]}{\sum_{i=1}^n x_iy_i-z(1-z)}.$$
For a $k-$algebra $R$, an $R$-valued point of the scheme $Q_{2n-1}$ is given by a tuple $(a, b)$ where $a=(a_1,\dots, a_n)\in R^{\oplus n}$ and $b=(b_1,\dots, b_n)\in R^{\oplus n}$ such that $\sum_{i=1}^n a_ib_i=1$ determined by $$x_i \mapsto a_i \ \text{and} \ y_i \mapsto b_i $$
for $i=1,\dots, n$.
We consider  $Q_{2n}$ and $Q_{2n-1}$ as pointed spaces in the motivic homotopy category and recall the following proposition which will be used repeatedly. 
\begin{proposition}(\cite[Proposition 1.1.5]{AF22}) \label{quadric-hom-type} Let $k$ be a field. Then the following holds.
\begin{enumerate}
\item[$(1)$] The quadric $Q_m$ is a smooth affine $k-$scheme of dimension $m$. 
    \item[$(2)$] 
There are $\#A^1-$weak equivalences
\begin{align*}
    Q_m &\sim_{\#A^1}\begin{cases}
        S^{n-1}\wedge \#G_m^{\wedge n} \sim_{\#A^1} \#A^n\setminus\{0\} &  \text{if} \ m=2n-1 \ \text{and}\\
        S^{n}\wedge \#G_m^{\wedge n} \sim_{\#A^1} (\#P^1)^{\wedge n}& \text{if} \ m=2n
    \end{cases}
\end{align*}
    of spaces over $\Spec k$.
\item[$(3)$] The quadric $Q_m$ is at least $\#A^1-(\lfloor{\frac{m}{2}\rfloor}-1)-$connected. Moreover, the first non-trivial $\#A^1-$homotopy sheaf of $Q_m$ appears in degree $\lfloor{\frac{m}{2}\rfloor}$ and if $m=2n$ or $m=2n-1$, it is $\KMW_n.$ 
    \item[$(4)$] If $n>0$, for a strictly $\#A^1-$invariant sheaf ${\bf M}$ of abelian groups  \begin{align*}
\coh^{i}(Q_{2n}, {\bf M})=&   \begin{cases}
        {\bf M}(k) \ & \text{if} \ i=0\\
       {\bf M}_{-n}(k) \ &\text{if} \ i=n \\
       0 \ & \text{otherwise}
    \end{cases}
    \end{align*}
    and for $n \geq 2$,
  \begin{align*}
\coh^{i}(Q_{2n-1}, {\bf M})=&   \begin{cases}
        {\bf M}(k) \ & \text{if} \ i=0\\
       {\bf M}_{-n}(k) \ &\text{if} \ i=n-1 \\
       0 \ & \text{otherwise.}
    \end{cases}
\end{align*}
\end{enumerate}
\end{proposition}

In this section, we mainly consider the following question.

\begin{question}(Vanishing of $\coh^d(X, \piA_d(Q_{2d-2}))$)\label{Q-Q} Let $X$ be an affine scheme of dimension $d$ over a field $k$. 
Under what conditions on $k$, $X$, $d$, is
$$\coh^d(X, \piA_d(Q_{2d-2}))=0?$$
\end{question}
The need to establish this will be clearer in  section~\ref{Euler-Chow}. 
In the vein of answering the above question, we prove the following proposition. 
\begin{proposition}\label{adjoint}
Let $k$ be a field that is not formally real. Then the map of Nisnevich sheaves 
$$\piA_{d-1}(\A^{d-1}\setminus \{0\})\to \piA_{d}(Q_{2d-2})$$ 
\begin{enumerate}
    \item[$(1)$] is an epimorphism for $d\geq 4$ and
    \item[$(2)$]  an isomorphism for $d\geq 5.$
\end{enumerate}

\end{proposition}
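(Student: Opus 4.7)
The plan is to realise the map in question as a motivic simplicial suspension map, and then to deduce the result from the motivic Freudenthal suspension theorem after computing connectivities via Morel's theorem.

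First, I would use the standard $\A^1$-weak equivalences (essentially due to Asok--Doran--Fasel)
$$\A^{d-1}-0 \;\simeq\; S^{d-2}\wedge \G_m^{\wedge(d-1)}, \qquad Q_{2d-2}\;\simeq\; S^{d-1}\wedge \G_m^{\wedge(d-1)},$$
which together exhibit $Q_{2d-2}$ as the simplicial suspension $\Sigma_s(\A^{d-1}-0)$. Under this identification, the adjunction unit
$$\A^{d-1}-0 \;\longrightarrow\; \Omega_s \Sigma_s(\A^{d-1}-0) \;\simeq\; \Omega_s Q_{2d-2}$$
induces, upon applying $\piA_{d-1}$, precisely the map of the proposition
$$\piA_{d-1}(\A^{d-1}-0)\longrightarrow \piA_{d-1}(\Omega_s Q_{2d-2})=\piA_d(Q_{2d-2}).$$

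Next, by Morel's connectivity theorem, the pointed $\A^1$-space $S^{d-2}\wedge \G_m^{\wedge(d-1)}$ is $(d-3)$-connected, with first non-vanishing $\A^1$-homotopy sheaf $\KMW_{d-1}$ concentrated in degree $d-2$.

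I would then invoke the $\A^1$-Freudenthal suspension theorem, in the form developed by Morel and further refined by Asok--Fasel and Asok--Bachmann--Hopkins: for an $\A^1$-connected pointed space $X$ of $\A^1$-connectivity $c$, the simplicial suspension
$$\Sigma_s\colon \piA_n(X)\longrightarrow \piA_{n+1}(\Sigma_s X)$$
is an epimorphism for $n\leq 2c+1$ and an isomorphism for $n\leq 2c$. Plugging in $c=d-3$ and $n=d-1$ gives surjectivity for $d\geq 4$ and bijectivity for $d\geq 5$, exactly as stated.

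The principal technical obstacle is justifying the correct form of motivic Freudenthal in this setting. Unlike in classical topology, the suspension map in $\A^1$-homotopy can be obstructed by the motivic Hopf element $\eta$, whose nilpotence properties depend sensitively on the arithmetic of $k$. The hypothesis that $k$ is not formally real is used precisely to guarantee that the $\eta$-torsion obstructions — which naturally live in Milnor--Witt or Grothendieck--Witt type groups arising from an EHP-style cofibre sequence comparing $\A^{d-1}-0$ with $\Omega_s\Sigma_s(\A^{d-1}-0)$ — degenerate enough for the stated connectivity bounds to apply. Carefully identifying these obstruction groups and verifying they vanish (respectively inject) in the precise cohomological degrees at play is where the bulk of the work lies.
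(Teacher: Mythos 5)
Your proposal identifies the right conceptual mechanism and gets the correct arithmetic, but it stops exactly where the actual proof begins. The paper takes precisely the route you gesture at: it uses the $\A^1$-fiber sequence $Q_{2d-3}\to\Omega Q_{2d-2}\to\Omega Q_{4d-5}$ from Asok--Fasel--Hopkins \cite[Proposition 5.3.10(1)]{AFH22} (the James/Ganea sequence you call ``EHP-style''), takes the long exact sequence in $\piA_*$, and kills the end terms using nothing more than $Q_{4d-5}\simeq\A^{2d-2}-0$ together with the standard Morel connectivity bound that $\A^n-0$ is $(n-2)$-$\A^1$-connected. That gives $\piA_d(\A^{2d-2}-0)=0$ for $d\geq 4$ and $\piA_{d+1}(\A^{2d-2}-0)=0$ for $d\geq 5$, which is exactly your surjectivity/bijectivity dichotomy. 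So the two arguments coincide; the difference is that you treat ``the motivic Freudenthal suspension theorem'' as a quotable black box, while the paper works with the fiber sequence directly.

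This matters because, in the form you state it, such a Freudenthal theorem for simplicial suspension is not an off-the-shelf result in $\A^1$-homotopy: its content \emph{is} the James/Ganea fiber sequence plus the connectivity estimate, and that is where you explicitly leave the gap (``this is where the bulk of the work lies''). Your proposal would therefore be a correct proof only once that fiber sequence is cited or established; as written it is circular in the sense that it assumes the theorem it needs to prove. One smaller point: your diagnosis that the ``not formally real'' hypothesis on $k$ is there to control $\eta$-torsion obstructions is somewhat off. In the paper's argument no $\eta$-torsion groups appear at all; the end terms vanish for purely connectivity reasons. The hypothesis is what is needed to invoke the machinery of \cite{AFH22} (localization and nilpotence for $\A^1$-homotopy) in the first place, so that the fiber sequence is available.
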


    \begin{proof}
        
 By \cite[Proposition 5.3.10 (1)]{AFH22} for $d\geq 3$ we have a fiber sequence  $$Q_{2d-3}\to \Omega Q_{2d-2} \to \Omega Q_{4d-5}$$ 
hence the exact sequence of sheaves of homotopy groups 
$$\xymatrix{
\piA_{d+1}(Q_{4d-5})\ar[r] & \piA_{d-1}(Q_{2d-3})\ar[r] & \piA_d(Q_{2d-2}) \ar[r] & \piA_d(Q_{4d-5})\ar[r]\ar[d]^{\simeq} & \piA_{d-2}(Q_{2d-3})\\ 
& & & \piA_d(\A^{2d-2}\setminus\{0\})  & 
}$$
The $\A^1-$weak equivalence $Q_{4d-5}\simeq S^{2d-3}\wedge \G_m^{\wedge 2d-2}\simeq \A^{2d-2}\setminus \{0\}$ (see~\Cref{quadric-hom-type}) induces the isomorphism 
$$\piA_d(Q_{4d-5})\simeq \piA_d(\A^{2d-2}\setminus\{0\})\simeq 0 \ \text{for} \ d\geq 4$$ and
$$\piA_{d+1}(Q_{4d-5})\simeq \piA_{d+1}(\A^{2d-2}\setminus\{0\})\simeq 0 \ \text{for} \ d\geq 5.$$
Hence we have an epimorphism $$\piA_{d-1}(Q_{2d-3})\to \piA_d(Q_{2d-2}) $$ of Nisnevich sheaves if $d\geq 4$ and isomorphism for $d\geq 5$.
By~\Cref{quadric-hom-type}, we have an $\#A^1-$weak equivalence $Q_{2d-3} \sim_{\A^1} \A^{d-1}\setminus\{0\}$ we have for $d\geq 4$
the epimorphism of Nisnevich sheaves 
$$\piA_{d-1}(\A^{d-1}\setminus\{0\})\to \piA_{d}(Q_{2d-2})$$ and it is an isomorphism for $d\geq 5.$

    \end{proof}

We recall the following result from  \cite{ABH23}.
\begin{proposition}
    \label{Vanishing:ABH}
(Asok--Bachmann--Hopkins~\cite{ABH23}) 
    Let $X$ be a smooth affine scheme of dimension $d$ ($d\geq 3$) over an algebraically closed field $k$ with $char(k)=0$. Then $$\coh^{d}(X, \piA_{d-1}(\A^{d-1}\setminus\{0\}))=0.$$
\end{proposition}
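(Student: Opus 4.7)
The plan is to identify the sheaf $\piA_{d-1}(\A^{d-1}-0)$ with something more accessible via a stabilization argument, and then establish the top Nisnevich cohomology vanishing piece by piece using the Gersten resolution and known results for Milnor $K$-theory and Witt-theoretic sheaves.

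First, I would write $\A^{d-1}-0 \simeq S^{d-2}_s \wedge \G_m^{\wedge(d-1)}$, so this space is $(d-3)$-connected simplicially. Morel's stable connectivity theorem, together with the motivic Freudenthal suspension theorem of Asok--Fasel and Wickelgren--Williams, gives that the simplicial suspension
\[
\piA_{d-1}(S^{d-2}_s\wedge \G_m^{\wedge (d-1)}) \longrightarrow \piA_{d}(S^{d-1}_s\wedge \G_m^{\wedge (d-1)}) \longrightarrow \cdots
\]
is an isomorphism once we are in the Freudenthal range, which is satisfied here because the relevant sphere is simplicially $(d-3)$-connected and we are asking about the $(d-1)$-th homotopy sheaf (the first motivic stem). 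Consequently $\piA_{d-1}(\A^{d-1}-0)$ can be identified with the first motivic stable stem $\pi_{1}^{\text{mot}, s}$ in weight $d-1$.

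Next I would invoke the computation of the first motivic stable stem, due in this form to R\"ondigs--Spitzweck--\O stv\ae r (and refined over characteristic zero by Bachmann--Hopkins and collaborators). Over a perfect field in which $\operatorname{char}(k)=0$, this sheaf sits in a short exact sequence
\[
0 \longrightarrow \mathcal{F} \longrightarrow \piA_{d-1}(\A^{d-1}-0) \longrightarrow \sKM_{d-1}/24 \longrightarrow 0,
\]
where $\mathcal{F}$ is built from quadratic pieces, namely certain $\underline{\mathbf{I}}^{j}$'s with $j\geq d$ (equivalently, pieces of $\sKMW$ along the fundamental-ideal filtration). I would then check top-cohomology vanishing for each factor: $H^d(X, \sKM_{d-1}/24)=0$ follows from the Gersten resolution of $\sKM_{d-1}$, which is concentrated in degrees $\le d-1$, so $H^d(X, \sKM_{d-1})=0$, and the mod-$24$ version inherits this vanishing through the short exact sequence $0\to \sKM_{d-1}\xrightarrow{24}\sKM_{d-1}\to \sKM_{d-1}/24\to 0$. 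For the quadratic piece $\mathcal{F}$, I would invoke Fasel's vanishing theorem for top cohomology of powers of the fundamental ideal on smooth affine varieties over algebraically closed fields of characteristic $\ne 2$, applied to each $\underline{\mathbf{I}}^{j}$ with $j\geq d$; chasing the induced long exact sequence then yields $H^d(X,\mathcal{F})=0$. Combined, the long exact sequence of the displayed extension forces $H^d(X, \piA_{d-1}(\A^{d-1}-0))=0$.

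The hard part is the stable-stem identification. The motivic Freudenthal step requires a careful connectivity count, but the real obstacle is the characteristic-zero computation of $\pi_{1}^{\text{mot}, s}$, which in turn rests on the Bloch--Kato/norm residue isomorphism (Voevodsky--Rost) and the motivic Adams spectral sequence analysis of the $1$-line. This is exactly why the hypothesis $\operatorname{char}(k)=0$ (rather than merely algebraically closed) is forced on us, and why the weaker parts (5)--(6) of Theorem~\ref{vanishing-intro} have to rely on a conjectural sandwich between $\KM_{d+1}/24$ and $\&{GW}^{d-1}_d$ rather than on the full identification.
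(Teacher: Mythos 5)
Your proposal takes a different route from the paper, which here simply observes that the statement is implicit in the proof of \cite[Theorem 7.1.1]{ABH23} and cites it. Your attempt to reprove it from scratch via simplicial Freudenthal stabilization plus the R\"ondigs--Spitzweck--\O{}stv\ae{}r computation is aimed in the right conceptual direction, but it has two genuine gaps.

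First, the Freudenthal range does not cover $d\geq 3$. The space $\A^{d-1}-0$ is $\A^1$-$(d-3)$-connected, and the simplicial Freudenthal theorem gives an isomorphism $\piA_i\to\piA_{i+1}(\Sigma^1_s(-))$ only for $i\leq 2(d-2)-2=2d-6$ and an epimorphism for $i=2d-5$; since you need $i=d-1$, the isomorphism range requires $d\geq 5$, and for $d=4$ you only get an epimorphism, which runs the wrong way to transfer a top-cohomology vanishing backward. Second, and more seriously, simplicial Freudenthal stabilizes only the $S^1_s$-direction, so what you obtain is the $S^1$-stable homotopy sheaf of $\G_m^{\wedge(d-1)}$ in Morel's $S^1$-stable category, not the $\mathbb{P}^1$-stable first stem computed by R\"ondigs--Spitzweck--\O{}stv\ae{}r. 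Identifying these two, i.e.\ showing the unstable sheaf is already $\G_m$-periodic in the relevant range, is precisely the hard content of \cite{ABH23}, whose title is ``On $\mathbb{P}^1$-stabilization in unstable motivic homotopy theory,'' and it is where $\operatorname{char}(k)=0$ actually enters; attributing the characteristic restriction to Bloch--Kato, as you do, cannot be right, since the norm residue isomorphism is available in any characteristic coprime to the modulus. Parts~(5) and~(6) of the paper's Theorem~\ref{vanishing-intro} exist precisely because this $\mathbb{P}^1$-stabilization step is conjectural (Conjecture~\ref{AF-conj}) over a general algebraically closed field. A smaller point: the Milnor $K$-theory term in the stable $1$-line in weight $d-1$ should be $\KM_{d+1}/24$, appearing as a sub-object as in Conjecture~\ref{AF-conj}, not $\KM_{d-1}/24$ as a quotient; the Gersten vanishing still goes through, but the extension you are invoking has been mis-transcribed.
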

\begin{proof}
    This is implicit in the proof of \cite[Theorem 7.1.1]{ABH23}.
\end{proof}

\begin{lemma}\label{lemma:torsion} Let $X$ be a smooth affine scheme of dimension $d$ ($d\geq 3$) over an algebraically closed field $k$. 

Consider $\GL_{d-1}\to \GL$ and the induced map $\piA_{d-1}(\GL_{d-1})  \to \piA_{d-1}(\GL)$ of $\#A^1-$homotopy sheaves. Then the kernel of the map of cohomology groups
 $$\coh^d(X, \piA_{d-1}(\GL_{d-1}))  \to \coh^d(X, \piA_{d-1}(\GL))$$ is $d!\cdot (d-1)!-$torsion. 

 Moreover, if we further assume that $d! \in k^{\x}$ then the kernel is $ (d-1)!-$torsion.
\end{lemma}
\begin{proof}
Consider the composite map $$ \coh^d(X, \piA_{d-1}(\GL_{d-1}))\xrightarrow{f} \coh^d(X, \piA_{d-1}(\GL_{d}))\xrightarrow{g} \coh^d(X, \piA_{d-1}(\GL_{d+1}))\xrightarrow{h} \coh^d(X, \piA_{d-1}(\GL)).$$ First we note that the map $h$ is an isomorphism, as follows from the isomorphism of the sheaves $\piA_{d-1}(\GL_{d+1})\simeq \piA_{d-1}(\GL)$.   
 To see that the map $g$ is an isomorphism, consider the fiber sequence  $$\GL_d\to \GL_{d+1}\to \A^{d+1}\setminus\{0\} $$ hence the long exact sequence $$ \piA_d(\GL_{d+1})\to \piA_d(\A^{d+1}\setminus\{0\})\to \piA_{d-1}(\GL_{d}) \to \piA_{d-1}(\GL_{d+1}) \to \piA_{d-1}(\A^{d+1}\setminus\{0\})=0.$$ 
 Breaking the long exact sequence as 
 $$\piA_d(\GL_{d+1})\to \piA_d(\A^{d+1}\setminus\{0\})\to A \to 0$$ and
 $$ 0\to A\to  \piA_{d-1}(\GL_{d}) \to \piA_{d-1}(\GL_{d+1})\to 0$$ and considering the associated long exact sequence of cohomology groups, we can deduce that the sequences 
 \begin{equation}\label{eq:coker1}
     \coh^d(X, \piA_{d}(\GL_{d+1}))\to \coh^d(X, \piA_d(\A^{d+1}\setminus\{0\}))\to \coh^d(X, A)\to 0
     \end{equation} and 
     \begin{equation}\label{eq:coker2}
          \coh^d(X, A)\to \coh^d(X, \piA_{d-1}(\GL_{d}))\xrightarrow{g} \coh^d(X, \piA_{d-1}(\GL_{d+1}))\to 0
          \end{equation} are exact.
 By~\cite[Lemma 2.4.1]{Fas24} and \eqref{eq:coker1} $\coh^d(X, A)$ is $d!-$torsion, hence by \eqref{eq:coker2} $\ker g$ is $d!-$torsion. (If we furthermore assume that $d! \in k^{\x}$, hence in particular the field $k$ is of characteristic $\neq 2$ (as $d\geq 3$),  by~\cite[Theorem 1.4, Theorem 2.2]{Fas15}, the cohomology group $$\coh^d(X,  \piA_d(\A^{d+1}\setminus\{0\}))\simeq \coh^d(X, \KMW_{d+1})\simeq \coh^d(X, \KM_{d+1})$$ is uniquely divisible prime to the characteristic of $k$. Hence by the surjection in \eqref{eq:coker1}, we can deduce that $\coh^d(X, A)$ is divisible prime to the characteristic of $k$, in particular it is $d!-$divisible. Thus, combining with $\coh^d(X, A)$ is $d!-$torsion, we deduce that $\coh^d(X, A)=0$ and hence $\Ker(g)=0$.)

 To see that the kernel $\Ker(f)$ of the homomorphism $f$
 is $(d-1)!-$torsion, consider the fiber sequence 
  $$\GL_{d-1}\to \GL_{d}\to \A^{d}\setminus\{0\} $$ hence the long exact sequence $$ \piA_d(\GL_{d})\to \piA_d(\A^{d}\setminus\{0\})\to \piA_{d-1}(\GL_{d-1}) \to \piA_{d-1}(\GL_{d}).$$ 
  Similarly as above, consider the long exact sequence of cohomology groups and get an exact sequence 
  $$\coh^d(X, \piA_d(\GL_{d}))\to \coh^d(X, \piA_d(\A^{d}\setminus\{0\}))\to \coh^d(X, \piA_{d-1}(\GL_{d-1}))\xrightarrow{f} \coh^d(X, \piA_{d-1}(\GL_{d}))$$
  By \cite[Lemma 2.4.1]{Fas24}, the cokernel of the map 
  $$\coh^d(X, \piA_d(\GL_{d}))\to \coh^d(X, \piA_d(\A^{d}\setminus\{0\})) $$ is $(d-1)!-$torsion. Hence $\Ker(f)$ is $(d-1)!-$torsion. 

  Combining the above we show that $\Ker(h\circ g\circ f)$ is $d!\cdot (d-1)!-$torsion. 
  To see this, consider the exact sequence $$0\to  \Ker(f)\to  \Ker(g\circ f) \to \Ker(g) \to \Coker(f)\to \Coker (g\circ f) \to \Coker(g) \to 0.$$ Using that $\Ker (f)$ is $(d-1)!-$torsion and $\Ker (g)$ is $d!-$torsion, we deduce that $\Ker (h\circ g\circ f)=\Ker (g\circ f)$ is $d!(d-1)!-$torsion. (As noted earlier, under the additional assumption that $d! \in k^{\x}$, we can deduce that $\Ker(h\circ g\circ f)=\Ker (g\circ f)=\Ker (f)$ is $(d-1)!-$torsion.)
\end{proof}
\begin{proposition} \label{torsion}
    Let $X$ be a smooth affine scheme of dimension $d$ ($d\geq 5$) over an algebraically closed field $k$ of characteristic $\neq 2$. Then $\coh^{d}(X, \piA_{d-1}(\A^{d-1}\setminus\{0\}))$ is $d!\cdot (d-1)!\cdot (d-2)!-$torsion.

    Moreover, if we further assume that $d! \in k^{\x}$ then the cohomology group is $ (d-1)!\cdot(d-2)!-$torsion.
\end{proposition}
\begin{proof}
The proof follows along the lines as in ~\cite[Proposition 4.0.9]{Fas24}. We provide the details for the convenience of the reader. 

Let $O$ be the (infinite) orthogonal group. Consider the Suslin map \cite[proof of Lemma 3.10]{AF14-JTop} $$u_{d-1}: \A^{d-1}\setminus\{0\} \to \GL_{d-1}\to  \GL $$ from \cite[section 2.4]{Fas24} which from \cite[\S 3]{AF17} factors through
$$ \A^{d-1}\setminus\{0\} \to \Omega_{\#P^1}^{-d+1}O \to  \GL$$
where $\Omega_{\#P^1}^{-d+1}O $ is a space representing Hermitian K-theory defined in~\cite[Definition 2.2.3]{AF17} and $\Omega_{\#P^1}^{-d+1}O \to \GL $ is the map induced by the forgetful map from Hermitian K-theory to K-theory. 
Now by ~\cite[Theorem 5 and Theorem 6]{ST15} and \cite[\S 4.4]{AF17} 
 $\piA_{d-1}(\Omega_{\#P^1}^{-d+1}O)\simeq \&{GW}_d^{d-1}$ and by \cite[Theorem 3.7.1]{AF15}  $\coh^d(X, \&{GW}_d^{d-1})=0$ (since $\CH^d(X)/2=0$ as base field $k$ is algebraically closed of characteristic $\neq 2$). In particular, this implies that the induced map on cohomology groups
 $$\coh^d(X, \piA_{d-1}( \A^{d-1}\setminus\{0\} )) \xrightarrow{\phi}  \coh^d(X, \piA_{d-1}(\GL_{d-1}))  \to \coh^d(X, \piA_{d-1}(\GL))$$ is 0.
  By~\Cref{lemma:torsion} the kernel of the 2nd map
 $$ \coh^d(X, \piA_{d-1}(\GL_{d-1}))  \to \coh^d(X, \piA_{d-1}(\GL)$$ is $ d!(d-1)!-$torsion.  Hence, the image of the map $\phi$ is $d!\cdot (d-1)!-$torsion. (Again by~\Cref{lemma:torsion}, under the additional assumption that $d! \in k^{\x}$, the image of the map $\phi$ is $(d-1)! -$torsion.)

  Further consider the composite of the maps from \cite[section 2.4]{Fas24} $$\A^{d-1}\setminus\{0\}\xrightarrow{u_{d-1}'} \GL_{d-1}\xrightarrow{r_{d-1}} \A^{d-1}\setminus\{0\}$$ $$(x_1, \dots, x_{d-1})\mapsto (x_1^{(d-2)!}, \dots, x_{d-1})$$ 
  is given by $\mu_{(d-2)!}.$ Since $\sqrt{-1} \in k $ and $d\geq 5$, by~\cite[Lemma 2.3.3]{Fas24}, the induced composite map 
 $$\coh^d(X, \piA_{d-1}(\A^{d-1}\setminus\{0\}))\xrightarrow{\phi} \coh^d(X, \piA_{d-1}(\GL_{d-1}))\to \coh^d(X, \piA_{d-1}(\A^{d-1}\setminus\{0\}))$$
 is given by multiplication by $(d-2)!$. As noted above the image of the first map $\phi$ is $d!\cdot (d-1)!-$torsion. 

 Thus combining the above observations, we get that the cohomology group $\coh^d(X, \piA_{d-1}(\A^{d-1}\setminus\{0\}))$ is $d!\cdot(d-1)!\cdot(d-2)!-$torsion. (Under the assumption $d! \in k^{\x}$, as noted above the image of the map $\phi$ is $(d-1)!-$torsion. Hence the group $\coh^d(X, \piA_{d-1}(\A^{d-1}\setminus\{0\}))$ is $(d-1)!\cdot(d-2)!-$torsion.)
\end{proof}

\begin{remark}
Fasel in \cite[Theorem 3]{Fas24} proved that under the assumptions of~\Cref{torsion}, if $d!\in k^{\x}$, then $\coh^d(X, \piA_d(\A^{d}\setminus\{0\}))=0$.
 \end{remark}
As a consequence, we have the following result.
\begin{corollary}\label{cor:vanishing}
     Let $X$ be a smooth affine scheme of dimension $d$ over an algebraically closed field $k$ of characteristic $\neq 2$. Then
     \begin{enumerate}
         \item[$(1)$] if $char(k)=0$ and $d\geq 4$, then  $\coh^{d}(X, \piA_{d}(Q_{2d-2}))=0$ and
         \item[$(2)$] if $d\geq 5$, then $\coh^{d}(X, \piA_{d}(Q_{2d-2}))$ is $d!\cdot (d-1)!\cdot(d-2)!-$torsion.
         \item[$(3)$] if $d\geq 5$ and $d!\in k^{\x}$, then $\coh^{d}(X, \piA_{d}(Q_{2d-2}))$ is $(d-1)!\cdot(d-2)!-$torsion.
     \end{enumerate}
\end{corollary}
\begin{proof}
  (1) follows from Proposition \ref{Vanishing:ABH} and  Proposition~\ref{adjoint}.
        The statements (2) and (3) follow from Proposition~\ref{torsion} and  Proposition~\ref{adjoint}.
    \end{proof}
\begin{remark} 
In general, though we suspect it to be so, it seems difficult to determine whether the cohomology groups $\coh^d(X, \piA_{d-1}(\A^{d-1}\setminus\{0\}))$ or $\coh^d(X, \piA_d(Q_{2d-2}))$ are trivial or not.
\end{remark}

We note that the vanishing of the cohomology groups $$\coh^d(X, \piA_{d-1}(\A^{d-1}\setminus\{0\})) \ \text{or} \ \coh^d(X, \piA_d(Q_{2d-2}))$$ follows from the following deep conjecture. 
\begin{conjecture}\label{AF-conj}(Asok--Fasel~\cite[Conjecture 7]{AF13})
 For $d\geq 5,$ the following sequence
$$(*) \ \ \ \KM_{d+1}/24\to \piA_{d-1}(\A^{d-1}\setminus\{0\})\to \&{GW}^{d-1}_d$$
is exact and after $(d-4)-$fold contractions it is surjective on the right.
\end{conjecture}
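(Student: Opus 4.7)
This is a well-known open conjecture of Asok--Fasel, quoted here from \cite{AF13} as a hypothesis on which part $(6)$ of Theorem~\ref{vanishing-intro} conditionally rests. No proof is given in the excerpt and, to the best of my knowledge, none is presently known, so any ``proof plan'' is inevitably an attack strategy rather than a completable derivation. The right framework is to view $\piA_{d-1}(\A^{d-1}-0)$ as the first non-trivial unstable $\A^1$-homotopy sheaf of the motivic sphere $\A^{d-1}-0 \simeq S^{d-2}\wedge \G_m^{\wedge(d-1)}$ and to bracket it between its $\#{P}^1$-stabilization on one side and a purely unstable, Milnor--K-theoretic error term on the other.

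The first step of the plan is to identify the right-hand map of $(*)$ with the stabilization map: by Morel's $\A^1$-Freudenthal and degree theorems, combined with R\"ondigs--Spitzweck--\O stv\ae r's computation of the $1$-line of the motivic stable stems, the natural map $\piA_{d-1}(\A^{d-1}-0) \to \&{GW}^{d-1}_d$ should coincide with the full $\#{P}^1$-stabilization and become an isomorphism once sufficiently many $\G_m$-loops are taken. The second step is to construct the left-hand map of $(*)$ via compositions of the motivic Hopf element $\eta$ with higher Hopf-type maps, so as to realise $\KM_{d+1}/24$ as a subquotient of $\piA_{d-1}(\A^{d-1}-0)$. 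The coefficient $24$ reflects, through Betti realisation, the classical order of the unstable Hopf invariant in $\pi_{2n-1}(S^n)$ in the first metastable stem; motivically the same invariant should be encoded through $\nu$ and its relations.

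Exactness at the middle of $(*)$ is the crux and the main obstacle: one has to identify the image of the Hopf-type construction with the entire kernel of $\#{P}^1$-stabilization, and no currently available tool (unstable motivic Adams, slice spectral sequence, motivic EHP) quite suffices in the bi-degree in question. The surjectivity after $(d-4)$-fold $\G_m$-contractions ought to follow more cleanly, since each contraction shifts the sphere index and four contractions push one into the genuinely stable range where the stabilization map is already an isomorphism by Morel's theorem; the remaining task is to make this precise through a motivic EHP or slice spectral sequence argument and to check that no torsion obstruction appears along the way. A complete proof therefore appears to demand substantive new input, such as fresh computations of $\piA_{p,q}(S^0)$ beyond the Morel and R\"ondigs--Spitzweck--\O stv\ae r ranges, or a motivic analogue of Toda's systematic treatment of the unstable metastable stems; it is precisely this gap which keeps the conjecture open and why the paper can only quote it as a hypothesis.
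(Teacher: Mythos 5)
You are correct: this is an open conjecture of Asok--Fasel, quoted in the paper only as a hypothesis for the conditional statements in Proposition~\ref{Q-vanish-mod-conj} and Theorem~\ref{vanishing-intro}(6), and the paper gives no proof of it. Your recognition of this, and your framing of the statement as a bracketing of $\piA_{d-1}(\A^{d-1}-0)$ between a Milnor $K$-theoretic error term and its $\#{P}^1$-stabilization, matches the intent of the conjecture as stated; the strategic remarks you offer are a reasonable sketch of why it remains out of reach, but they are not, and do not claim to be, a proof.
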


\begin{proposition}\label{Q-vanish-mod-conj} 
Assume Conjecture~\ref{AF-conj} holds. Let $k$ be an algebraically closed field of characteristic $\neq 2$. Let $X$ be a smooth affine variety of dimension $d\geq 5$ over $k$. Then 
$$\coh^d(X, \piA_d(Q_{2d-2}))=0.$$
\end{proposition}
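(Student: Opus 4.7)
The plan is as follows. By Proposition~\ref{adjoint}, for $d\geq 5$ the natural map $\piA_{d-1}(\A^{d-1}-0) \to \piA_{d}(Q_{2d-2})$ is an isomorphism of Nisnevich sheaves, so it is enough to prove $H^d(X,\piA_{d-1}(\A^{d-1}-0))=0$. Write $F := \piA_{d-1}(\A^{d-1}-0)$ and invoke Conjecture~\ref{AF-conj}: one obtains an exact sequence
$$\KM_{d+1}/24 \xrightarrow{\alpha} F \xrightarrow{\beta} \&{GW}^{d-1}_{d}$$
in which $\beta$ becomes surjective after $(d-4)$-fold contractions. Setting $K := \Ker\beta = \im\alpha$, $B := \im\beta$ and $C := \coker\beta$, we obtain short exact sequences
$$0 \to K \to F \to B \to 0, \qquad 0 \to B \to \&{GW}^{d-1}_{d} \to C \to 0,$$
together with a surjection $\KM_{d+1}/24 \twoheadrightarrow K$ and the relation $C_{-(d-4)}=0$ (contractions being exact on strictly $\A^1$-invariant sheaves).

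The first step is to show $H^d(X,K)=0$. Since $X$ is smooth affine of dimension $d$, its Nisnevich cohomological dimension is $\leq d$, so the surjection $\KM_{d+1}/24 \twoheadrightarrow K$ induces a surjection $H^d(X,\KM_{d+1}/24) \twoheadrightarrow H^d(X,K)$. I would then split the $4$-term sequence $0 \to \KM_{d+1}[24] \to \KM_{d+1} \xrightarrow{24} \KM_{d+1} \to \KM_{d+1}/24 \to 0$ into two short exact sequences through the subsheaf $\im(24)\subset \KM_{d+1}$; a short chase on the resulting long exact cohomology sequences, using the vanishing of $H^{d+1}$ coming from cohomological dimension, identifies $H^d(X,\KM_{d+1}/24)$ with the cokernel of multiplication by $24$ on $H^d(X,\KM_{d+1})$. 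Since this latter group is uniquely divisible by \cite{Fas15} (as recalled in the proof of Proposition~\ref{torsion}), multiplication by $24$ is an isomorphism and the cokernel vanishes, giving $H^d(X,K)=0$.

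The second step is to show $H^d(X,B)=0$, and this is where the contraction-vanishing hypothesis enters. The long exact sequence of the second SES contains
$$H^{d-1}(X,C) \to H^d(X,B) \to H^d(X,\&{GW}^{d-1}_{d}),$$
and the right-hand term vanishes by \cite[Theorem 3.7.1]{AF12}. For the left-hand term I would use Morel's Rost--Schmid complex of a strictly $\A^1$-invariant sheaf $\mathcal F$, whose $i$-th term has the shape $\bigoplus_{x\in X^{(i)}} \mathcal F_{-i}(k(x);\omega_{x})$, so the complex vanishes in degree $i$ whenever $\mathcal F_{-i}=0$. Since contractions compose, $(\mathcal F_{-a})_{-b} = \mathcal F_{-(a+b)}$, the hypothesis $C_{-(d-4)}=0$ propagates to $C_{-i}=0$ for all $i\geq d-4$; the Rost--Schmid complex of $C$ therefore vanishes identically in degrees $\geq d-4$, forcing $H^i(X,C)=0$ for every such $i$. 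In particular $H^{d-1}(X,C)=0$, and consequently $H^d(X,B)=0$.

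Combining the two steps, the long exact sequence of $0\to K\to F\to B\to 0$ reads $H^d(X,K)\to H^d(X,F)\to H^d(X,B)$ with both flanking groups zero, so $H^d(X,F)=0$ as required. The technical heart of the argument is the second step: translating the purely sheaf-theoretic input ``surjective after $(d-4)$-fold contractions'' into the Nisnevich cohomological vanishing $H^{d-1}(X,C)=0$ via the Rost--Schmid presentation is the key mechanism, and it is precisely why the conjecture is formulated with a prescribed contraction index.
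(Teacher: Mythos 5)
Your proposal is correct and follows the same overall route as the paper: reduce to $H^d(X,\piA_{d-1}(\A^{d-1}-0))$ via Proposition~\ref{adjoint}, feed in the conjectural sequence, and bound $H^d$ by the vanishing of $H^d(X,\KM_{d+1}/24)$ (the paper gets this directly from the Gersten complex, since $k(x)^*/24=0$ over $\-k$, rather than via unique divisibility, but this is a stylistic choice) and $H^d(X,\&{GW}^{d-1}_d)$. The genuine added value in your write-up is that you explicitly isolate the cokernel $C=\coker\beta$ and show $H^{d-1}(X,C)=0$ using the compositionality of contractions and the Rost--Schmid complex; the paper jumps directly from the exactness-plus-contraction hypothesis to ``it is enough to prove vanishing of the two outer cohomology groups'' without spelling out why the cokernel term in the long exact sequence causes no trouble, so your step is precisely the justification that is left implicit in the printed proof.
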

\begin{proof}
   By~\Cref{adjoint}, we have the epimorphism of sheaves $$\piA_{d-1}(\A^{d-1}\setminus\{0\})\to \piA_{d}(Q_{2d-2}).$$ Hence it suffices to show that $\coh^d(X, \piA_{d-1}(\A^{d-1}\setminus\{0\}))=0$. 
   
In view of Conjecture~\ref{AF-conj}, for $d\geq 5,$ the following sequence 
$$ \KM_{d+1}/24\to \piA_{d-1}(\A^{d-1}\setminus\{0\})\to \&{GW}^{d-1}_d$$
is exact and surjective on the right after $(d-4)-$fold
contractions. Hence it is enough to prove the vanishing of $\coh^d(X,\&{GW}^{d-1}_d)$ and $\coh^d(X, \KM_{d+1}/24)$.

The vanishing of $\coh^d(X,\&{GW}^{d-1}_d)$ follows from~\cite[Proposition 3.6.4]{AF15} under the assumptions of the base field $k$. For this we will need to show that $\CH^d(X)/2=0$. Since the cohomological dimension of $X$ is at most $d$, we can observe that $$\CH^d(X)/2\simeq \coh^d(X, \KM_d/2).$$ 
The vanishing of $\coh^d(X,\KM_{d}/2)$ follows from the Gersten complex for the sheaf $\KM_{d}/2$. The group $\coh^d(X,\KM_{d}/2)$ is a quotient of $\bigoplus_{x\in X^{(d)}}k(x)^*/2$ and since $k(x)=k$ is an algebraically closed field, $k(x)^*/2=0$ for each $x\in X$ of codimension $d$.

It remains to show the vanishing of $\coh^d(X,\KM_{d+1}/24)$. This follows from the Gersten complex for the sheaf $\KM_{d+1}/24$. The cohomology group $\coh^d(X,\KM_{d+1}/24)$ is a quotient of $\bigoplus_{x\in X^{(d)}}k(x)^*/24=0 $ since $k(x)=k$ is an algebraically closed field.
\end{proof}

For the rest of this section we answer the Question~\ref{Q-Q} for $d=2, 3$ and 4.
\begin{proposition}\label{dim2}
Let $X$ be a smooth affine scheme of dimension $d=2$ over an algebraically closed field $k$ of characteristic $\neq 2$. Then $$\coh^2(X, \piA_2(Q_{2}))=0.$$
\end{proposition}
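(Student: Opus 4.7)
The plan is a direct reduction to the $d=2$ case of Fasel's vanishing theorem \cite[Theorem 3]{Fas21}. First I would invoke the $\A^1$-weak equivalence $Q_2 \simeq \A^2-0$ recorded in the remark following Theorem~\ref{vanishing-intro}; this immediately identifies the strictly $\A^1$-invariant Nisnevich sheaves
$$\piA_2(Q_2) \;\cong\; \piA_2(\A^2-0).$$
If one prefers to go through $Q_2 \simeq \Pone$ (the other standard identification of the affine quadric), one obtains the same conclusion by applying the long exact sequence of $\A^1$-homotopy sheaves associated with the motivic Hopf fibration $\G_m \to \A^2-0 \to \Pone$: since $\G_m$ is $\A^1$-rigid (so $\piA_n(\G_m) = 0$ for $n \geq 1$), the connecting maps force $\piA_2(\A^2-0) \xrightarrow{\cong} \piA_2(\Pone) \cong \piA_2(Q_2)$.

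Next I would apply \cite[Theorem 3]{Fas21}, which asserts that for a smooth affine variety $X$ of dimension $d$ over an algebraically closed field $k$ with $d! \in k^{\times}$ one has $H^d(X, \piA_d(\A^d-0)) = 0$. Specializing to $d=2$ (the hypothesis $2 \in k^{\times}$ being standard under the implicit assumption $\operatorname{char}(k) \neq 2$ natural in the symplectic framework of this paper) gives
$$H^2(X, \piA_2(\A^2-0)) = 0,$$
and combining with the isomorphism above yields the desired vanishing $H^2(X, \piA_2(Q_2)) = 0$.

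There is no serious obstacle: the proposition is essentially a translation of Fasel's vanishing theorem through the $\A^1$-equivalence between the affine quadric $Q_2$ and the punctured plane $\A^2-0$. The only subtlety worth flagging is the characteristic hypothesis entering via $d! \in k^\times$, which is harmless here since $d=2$ and the paper works in the symplectic setting.
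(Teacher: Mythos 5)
Your proposal has a genuine gap at its central step. Fasel's vanishing theorem \cite[Theorem 3]{Fas21}, which asserts $H^d(X,\piA_d(\A^d-0))=0$, is stated and proved only for $d\geq 3$; it simply does not apply when $d=2$. That is exactly why the paper's remark says the proposition ``complements'' Fasel's result: Proposition~\ref{dim2} is precisely the $d=2$ statement that lies outside the range of \cite[Theorem 3]{Fas21}, so invoking Fasel here is circular. (Incidentally, the remark's phrasing is loose: $Q_2$ is $\A^1$-equivalent to $\Pone$, not to $\A^2-0$; but your alternative route through the Hopf fibration $\A^2-0\to\Pone\to B\G_m$ correctly produces the sheaf isomorphism $\piA_2(\A^2-0)\cong\piA_2(Q_2)$, which is what is actually needed. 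That first step is fine.)

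Because of this, the reduction to $\A^2-0$ is only half the work. The second homotopy sheaf of $\A^2-0\simeq SL_2$ sits outside the metastable range, so one cannot use the description of $\piA_n(\A^n-0)$ underlying Fasel's argument; instead the paper uses the Asok--Fasel short exact sequence
\[
0\to\&T_4'\to\piA_2(SL_2)\to\&{GW}^2_3\to 0,
\]
together with the sequence $\&I^5\to\&T_4'\to\&S_4'\to0$, the extension $0\to\&A\to\&{GW}^2_3\to\&B\to 0$, Gersten-complex computations over an algebraically closed field, and $2$-divisibility of $F^2K_0(X)$, to show both $H^2(X,\&T_4')=0$ and $H^2(X,\&{GW}^2_3)=0$. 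Nothing in that chain requires $\operatorname{char}(k)\neq 2$, whereas your route, even if Fasel's theorem did extend to $d=2$, would impose the superfluous hypothesis $2\in k^\times$ which does not appear in the statement of Proposition~\ref{dim2}. You need to replace the citation of \cite[Theorem 3]{Fas21} by a direct computation of $H^2(X,\piA_2(SL_2))$ along these lines.
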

\begin{proof}

From \Cref{quadric-hom-type}, we have $Q_2 \sim_{\A^1}S^1\wedge \#G_m \sim_{\A^1} \mathbb{P}^1.$ 
We have an $\A^1-$fiber sequence $$\#A^2 \setminus\{0\}\to \#P^1\to B\#G_m$$
 which induces the long exact sequence of sheaves
 $$\cdots \to \piA_3(B\#G_m)\to \piA_2(\A^2\setminus\{0\})\to  \piA_2(\#P^1)\to \piA_2(B\#G_m)\to \cdots $$
 One observes that $\piA_3(B\#G_m)=0$ and $ \piA_2(B\#G_m)=0,$ hence we have the isomorphisms $$\piA_2(\A^2\setminus\{0\})\simeq  \piA_2(\#P^1)\simeq  \piA_2(Q_2)$$ of sheaves. Furthermore, since $\A^2\setminus\{0\}\sim_{\#A^1} \SL_2$, we have $$\piA_2(Q_2)\simeq \piA_2(\A^2\setminus\{0\})\simeq \piA_2(\SL_2).$$ Hence it is enough to show that $\coh^2(X,\piA_2(\SL_2))=0$.
From \cite[Theorem 3.3]{AF14}, we have the following short exact sequence of the form 
$$0\to \&T_4'\to \piA_2(\SL_2)\to \&{GW}^2_3\to 0.$$
Hence we get the long exact sequence of cohomology groups
$$\coh^2(X, \&T'_4)\to \coh^2(X, \piA_2(\SL_2))\to \coh^2(X,\&{GW}^2_3)\to 0.$$
This reduces to show that under the assumptions of the proposition $$\coh^2(X, \&T'_4)=0 \ \ \text{and} \ \ \coh^2(X,\&{GW}^2_3)=0.$$
Now we have the following sequence by~\cite[Lemma 3.1]{AF14}
$$\&I^5\to \&T'_4\to \&S_4'\to 0$$ which is exact (not necessarily on the left) and the sheaf $\&S'_4$ is a quotient of $\KM_4/12$ where $\KM_4$ is the Milnor K-theory sheaf. By~\cite[Lemma 3.0.1]{Fas24} the sheaf $\&I^5=0$ once restricted to the small Nisnevich site associated to $X$ of $\dim X= 2$.  Hence $\&T'_4\to \&S'_4$ is an isomorphism of sheaves on $X$. Thus $\coh^2(X, \&T'_4)\simeq \coh^2(X, \&S'_4)$.  Recalling that $\&S'_4$ is a quotient of $\KM_4/12$, by the long exact sequence of the cohomologies and also that the Nisnevich cohomological dimension of $X$ is at most 2, we get  $$\coh^2(X, \KM_4/12)\twoheadrightarrow \coh^2(X, \&S_4')$$ is surjective.

We calculate 
$\coh^2(X, \KM_4/12)$ using the Gersten complex resolution for $\KM_4/12.$ Thus, it will a quotient of $$\bigoplus_{x\in X^{(2)}}\KM_2(k(x))/12.$$ We observe that for $x\in X^{(2)}$,  $\KM_2(k(x))/12=0$ follows from \cite[Chapter VI, Corollary 1.3.1, Theorem 1.6]{Weib13}  since $k(x)=k$ is an algebraically closed field.
Thus, we get $\coh^2(X, \KM_4/12)=0$, and hence $\coh^2(X, \&S_4')=0.$ 

Now it remains to show $\coh^2(X, \&{GW}^2_3)=0.$
We have the following short exact sequence from \cite[section 3.6, page 1044]{AF15}
$$0\to \&{A} \to \&{GW}_3^2 \to \&{B} \to 0.$$
We refer the reader to \cite[section 3.6]{AF15} and also \cite[(4.4), page 2579]{AF14}  for the definition and properties of $\&A$ and $\&B$, which will be used crucially in what follows.

By the above exact sequence, to prove $\coh^2(X, \&{GW}_3^2)=0$, it is enough to prove that $$\coh^2(X, \&{A})=0 \ \ \text{and} \ \  \coh^2(X, \&{B})=0.$$
By \cite[Lemma 4.15]{AF14}, we have $\coh^2(X, \&B)\cong \coh^2(X, \&K_2^Q/2)$. 
We have an exact sequence $$0\to 2\cdot \&K_2^Q\to \&K_2^Q\to \&K_2^Q/2\to 0$$
Hence $$ \coh^2(X, \&K_2^Q)\xrightarrow{2\cdot} \coh^2(X, \&K_2^Q)\twoheadrightarrow  \coh^2(X, \&K_2^Q/2)$$ 
is exact. We observe that by Bloch's formula, $$\coh^2(X, \&K_2^Q)=\CH^2(X)\simeq F^2K_0(X).$$ 
Since $F^2K_0(X)$ is 2-divisible for a smooth affine variety $X$ over an algebraically closed field, we conclude that $\coh^2(X, \&K_2^Q/2)=0$.
Hence $\coh^2(X, \&B)=0$.

Thus, it remains to prove $\coh^2(X, \&A)=0$. Note that by~\cite[Lemma 4.11]{AF14} $$(\&A)_{-2}\simeq\&K_1^{Q}/2.$$
From the Gersten resolution for the sheaf $\&A$, we have the following surjection

$$\bigoplus_{x\in X^{(2)}}\&K_1^{Q}/2(k(x))\simeq \bigoplus_{x\in X^{(2)}}(\&A)_{-2}(k(x)) \twoheadrightarrow \coh^2(X,\&A)$$ 
Since the field $k(x)=k$ is an algebraically closed field, the left- hand side group vanishes.
Hence $\coh^2(X,\&A)=0$. Combining all the vanishing results above we get the proposition.
\end{proof}

We note the following vanishing result.
\begin{proposition}\label{van1}
Let $X$ be a smooth affine scheme of dimension 3 over an algebraically closed field $k$ of characteristic $\neq 2$.
Then $\coh^3(X, \piA_2(Q_3))\simeq \coh^3(X, \piA_2(\A^2\setminus\{0\}))=0$.
    
\end{proposition}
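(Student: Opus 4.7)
The plan is to extend the argument of Proposition~\ref{dim2} by one cohomological degree. The first isomorphism in the statement comes from the $\A^1$-equivalence $Q_3\sim_{\A^1}\A^2-0$, which is itself $\A^1$-equivalent to $\&{SL}_2$. I then invoke the short exact sequence
$$0\to \&T'_4\to \pi_2^{\A^1}(\&{SL}_2)\to \&{GW}^2_3\to 0$$
from~\cite{AF14}; since $X$ has Nisnevich cohomological dimension at most $3$, the associated long exact sequence reduces the problem to showing $H^3(X,\&T'_4)=0$ and $H^3(X,\&{GW}^2_3)=0$.

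For the first vanishing, I would apply the right exact sequence $\&I^5\to \&T'_4\to \&S'_4\to 0$ from~\cite[Theorem 3]{AF14}, which further reduces the question to the vanishing of $H^3(X,\&I^5)$ and $H^3(X,\&S'_4)$. The Gersten resolution presents $H^3(X,\&I^5)$ as a quotient of $\bigoplus_{x\in X^{(3)}}(\&I^5)_{-3}(k(x))=\bigoplus_{x}I^2(k)$, using that $(\&I^5)_{-n}\simeq\&I^{5-n}$; this vanishes since $W(k)=\Z/2$ and hence $I(k)=0$ for $k$ algebraically closed. For $\&S'_4$, exactly as in Proposition~\ref{dim2}, there is a surjection $H^3(X,\KM_4/12)\twoheadrightarrow H^3(X,\&S'_4)$, and the Gersten resolution for $\KM_4/12$ exhibits $H^3(X,\KM_4/12)$ as a quotient of $\bigoplus_{x\in X^{(3)}}k(x)^*/12=0$, since $k(x)=k$ is algebraically closed and $k^*$ is divisible.

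For the second vanishing, I would invoke the short exact sequence $0\to \&A\to \&{GW}^2_3\to \&B\to 0$ from~\cite[Section 3.6]{AF12}, reducing to $H^3(X,\&A)=0$ and $H^3(X,\&B)=0$. For $\&B$, by~\cite[Lemma 4.15]{AF14} we have $H^3(X,\&B)\cong H^3(X,\&K_2^Q/2)$; the sequence $0\to 2\cdot\&K_2^Q\to \&K_2^Q\to \&K_2^Q/2\to 0$ then reduces matters to $H^3(X,\&K_2^Q)=0$, which follows from the Gersten complex since its top term $\bigoplus_{x\in X^{(3)}}K_{-1}(k(x))$ vanishes. For $\&A$, the identification $(\&A)_{-2}\simeq\&K_1^Q/2$ from~\cite[Lemma 4.11]{AF14}, together with the divisibility of $CH^3(X)$ for smooth affine threefolds over an algebraically closed field, should force the top Nisnevich cohomology of the relevant mod-$2$ quotients to vanish once $(\&A)_{-3}$ is pinned down.

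The principal obstacle will be the final step: one must identify the contraction $(\&A)_{-3}$ explicitly and verify that the Gersten differential at codimension three surjects onto the top term, invoking the precise construction of $\&A$ from Asok--Fasel. The remainder of the argument is a routine degree-shift of the $d=2$ case.
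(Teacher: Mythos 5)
Your strategy agrees with the paper's almost step for step: the same reductions via $0\to\&T'_4\to\piA_2(\A^2-0)\to\&{GW}^2_3\to 0$, $\&I^5\to\&T'_4\to\&S'_4\to 0$, and $0\to\&A\to\&{GW}^2_3\to\&B\to 0$. Two of your intermediate steps are done slightly differently but both work: for $H^3(X,\&I^5)=0$ the paper simply cites \cite[Proposition 5.2]{AF14} (which applies because $cd_2(k)\leq 1$), whereas your direct Gersten computation via $(\&I^5)_{-3}\simeq\&I^2$ and $I(k)=0$ is a legitimate alternative; and your argument that $H^3(X,\&B)\cong H^3(X,\&K^Q_2/2)=0$ (ultimately because $(\&K^Q_2)_{-3}=\&K^Q_{-1}=0$) matches what the paper means by ``similar arguments as in \cite[Lemma 4.15]{AF14}.''

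The one genuine gap is the one you flag: you need $(\&A)_{-3}$. The paper settles it as follows, and the argument is short. Since $(\&A)_{-2}\simeq\&K^Q_1/2$, a further contraction gives $(\&A)_{-3}\simeq(\&K^Q_1/2)_{-1}\simeq\&K^Q_0/2\simeq\Z/2$, so the degree-three term of the Gersten complex for $\&A$ is $\bigoplus_{x\in X^{(3)}}\Z/2$, and hence $H^3(X,\&A)$ is $2$-torsion. For the other direction you need more than you wrote: \cite[Lemma 4.11]{AF14} also gives an \emph{epimorphism of sheaves} $\&K^Q_3\twoheadrightarrow\&A$. Since $\dim X=3$ this yields a surjection $CH^3(X)\cong H^3(X,\&K^Q_3)\twoheadrightarrow H^3(X,\&A)$, and $CH^3(X)$ is divisible over an algebraically closed field, so $H^3(X,\&A)$ is $2$-divisible. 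Being simultaneously $2$-torsion and $2$-divisible, it vanishes. With this filled in, your proof is complete and essentially identical to the paper's.
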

\begin{proof}
    Using that $\#A^2-0\sim_{\#A^1}\SL_2=\Sp_2$ we have the following short exact sequence from  \cite[Theorem 3.3]{AF14}
$$0\to \&T_4'\to \piA_2(\A^2\setminus\{0\})\to \&{GW}^2_3\to 0$$
hence we get the long exact sequence of cohomology groups
$$\coh^3(X, \&T'_4)\to \coh^3(X, \piA_2(\A^2\setminus\{0\}))\to \coh^3(X,\&{GW}^2_3)\to \coh^4(X,\&T_4')=0 \ \text{~~(because of dimension)} $$
This reduces to show that under the same assumptions,  we have $$\coh^3(X, \&T'_4)=0 \ \ \text{and} \ \ \coh^3(X,\&{GW}^2_3)=0$$ which is shown in the last paragraph of the proof of~\cite[Theorem 6.6]{AF14}.
Hence the proposition.
\end{proof}

For $d=3$ in the case of the field $k=\-{\#F}_p$ we have the following vanishing result.
\begin{proposition}\label{vanishing-dim3}
    Let $X$ be a smooth affine scheme of dimension $3$ over an algebraically closed field $k$.
 If $k=\overline{\mathbb{F}}_p$, for a prime $p\neq 2$, then $\coh^{3}(X, \piA_3(Q_{4}))=0$.
\end{proposition}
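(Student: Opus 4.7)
The plan is to specialize to $d=3$ the $\A^1$-fiber sequence $Q_{2d-3} \to \Omega Q_{2d-2} \to \Omega Q_{4d-5}$ used in the proof of Proposition~\ref{adjoint}. With $Q_3 \simeq_{\A^1} \A^2 - 0$, $Q_7 \simeq_{\A^1} \A^4 - 0$, and Morel's identification $\piA_{n-1}(\A^n - 0) \simeq \KMW_n$, the resulting long exact sequence of $\A^1$-homotopy sheaves reads
\[ \piA_4(\A^4 - 0) \to \piA_2(\A^2 - 0) \to \piA_3(Q_4) \to \KMW_4 \to \KMW_2, \]
from which one extracts a short exact sequence $0 \to M \to \piA_3(Q_4) \to K \to 0$, where $M$ is a quotient of $\piA_2(\A^2 - 0)$ and $K \subseteq \KMW_4$ is the kernel of the rightmost map.

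Taking Nisnevich cohomology on $X$ and using that $\dim X = 3$, one obtains the right-exact sequence
\[ H^3(X, M) \to H^3(X, \piA_3(Q_4)) \to H^3(X, K) \to 0. \]
The surjection $\piA_2(\A^2-0) \twoheadrightarrow M$ induces a surjection $H^3(X, \piA_2(\A^2-0)) \twoheadrightarrow H^3(X, M)$, so Proposition~\ref{van1} gives $H^3(X, M) = 0$, and the problem reduces to showing $H^3(X, K) = 0$.

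For this final step, the hypothesis $k = \overline{\F}_p$ is essential. I would first identify the connecting map $\KMW_4 \to \KMW_2$ using the Milnor--Witt ring structure (plausibly multiplication by $\eta^2$, so that $K = \ker\eta^2$), then decompose $K$ via the standard exact sequences relating $\KMW_n$ with its $\KM$- and $\&I$-components, and use the Gersten resolution to reduce the vanishing of $H^3(X, K)$ to computations over residue fields at codimension-$3$ points, where $k(x) = \overline{\F}_p$. Three features of $\overline{\F}_p$ (with $p$ odd) come into play: the Witt ring equals $\Z/2$ so $I(\overline{\F}_p) = 0$; Bass--Tate's theorem yields $\KM_n(\overline{\F}_p) = 0$ for $n \geq 2$; and $\overline{\F}_p^\times$ is divisible, forcing vanishing of any $m$-torsion that surfaces in the analysis, mirroring how $F^i K_0$ was used to kill $\&B$-type contributions in the proof of Proposition~\ref{van1}. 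The main obstacle I anticipate is making the identification of the connecting map sufficiently precise to run a clean Gersten-complex argument, and separately handling the unramified $\&I^n$-components over function fields of positive transcendence degree over $\overline{\F}_p$, which are nontrivial even though $I(\overline{\F}_p) = 0$.
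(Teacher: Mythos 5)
Your proposal takes a genuinely different route from the paper, and that route has a gap at the decisive step. The paper instead uses the $SL_2$-torsor fiber sequence arising from the second Hopf map $\nu: Q_7 \to Q_4$, namely $SL_2 \to Q_7 \to Q_4$, whose long exact sequence reads
\[
\piA_3(SL_2)\to \piA_3(Q_7)\to \piA_3(Q_4)\to \piA_2(SL_2) \to 0,
\]
so that $\piA_3(Q_4)$ is an extension of $\piA_2(SL_2)\simeq\piA_2(\A^2-0)$ by a \emph{quotient} $N$ of $\piA_3(Q_7)\simeq\KMW_4$. After killing $H^3(X,\piA_2(\A^2-0))$ via Proposition~\ref{van1}, right-exactness of top-degree cohomology gives a surjection $H^3(X,\KMW_4)\twoheadrightarrow H^3(X,N)\twoheadrightarrow H^3(X,\piA_3(Q_4))$, and $H^3(X,\KMW_4)=0$ for $k=\overline{\F}_p$ (Proposition~\ref{van}, via the identification with $Um_{4}(A)/E_{4}(A)$ and Suslin--Vaserstein) finishes the proof cleanly.

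Your version, based on the loop-space fiber sequence $Q_3\to\Omega Q_4\to\Omega Q_7$ from \cite[Proposition 5.3.10]{AFH22}, reverses the roles: you arrive at $0\to M\to\piA_3(Q_4)\to K\to 0$ with $K=\ker(\KMW_4\to\KMW_2)$ a \emph{subsheaf} of $\KMW_4$. After disposing of $H^3(X,M)$, you need $H^3(X,K)=0$. The gap is here: because top Nisnevich cohomology on a $3$-dimensional scheme is right exact but not left exact, $H^3(X,\KMW_4)=0$ does \emph{not} imply $H^3(X,K)=0$. Concretely, writing $0\to K\to\KMW_4\to L\to 0$, one gets $H^2(X,L)\twoheadrightarrow H^3(X,K)$, so the issue shifts to a degree-$2$ computation for $L\subseteq\KMW_2$ that you have not carried out. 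Moreover, contracting three times, the top piece of the Gersten complex of $K$ involves $K_{-3}(\kappa(x))\subseteq\KMW_1(\overline{\F}_p)\cong\overline{\F}_p^{\times}$, which is nonzero, so divisibility alone will not immediately give the vanishing the way you suggest. In short, the same two inputs (Propositions~\ref{van1} and~\ref{van}) appear in both arguments, but the paper's fiber sequence places $\KMW_4$ on the \emph{source} side so that right-exactness of $H^3$ does the work; your fiber sequence places it on the \emph{target} side, and to complete your argument you would need to explicitly identify the connecting map $\KMW_4\to\KMW_2$ and settle the remaining degree-$2$ obstruction, neither of which is straightforward.
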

\begin{proof}
We recall from \cite[Example 4.3.1]{ADF17} that up to change of co-ordinates $$Q_{7}=\Spec \bigg(\dfrac{k[x_{11}, x_{12}, x_{21}, x_{22}, y_{11}, y_{22}, y_{12}, y_{21}]}{ x_{11}x_{22}-x_{12}x_{21}-y_{11}y_{22}+y_{12}y_{21}-1}\bigg) $$
and $$Q_{4}=\Spec \bigg(\dfrac{k[z_1, z_2, z_3, z_4, z_5]}{z_1z_4-z_2z_3-z_5(1+z_5)}\bigg).$$
The motivic Hopf map $\nu: Q_7\to Q_4$ is given by  $$(M_1, M_2) \mapsto (M_1M_2, \det M_2).$$  
where $M_1, M_2$ are $2\x2$ matrices $M_1=\begin{pmatrix}
        x_{11} & x_{12}\\
    x_{21} & x_{22}
\end{pmatrix}$, $M_2=\begin{pmatrix}
        y_{11} & y_{12}\\
    y_{21} & y_{22}
\end{pmatrix}$ such that $\det M_1-\det M_2=1$ and setting $\begin{pmatrix}
        z_{1} & z_2\\
    z_{3} & z_{4}
\end{pmatrix}=M_1M_2$ and $z_5=\det M_2$. We note that $\nu: Q_7\to Q_4$ is an $\SL_2-$torsor. Recall that the scheme $Q_4$ is pointed at $(0, 0, 0, 0, 0)$. 
Consider the associated $\A^1-$fiber sequence 
$$\SL_2\to Q_7\to Q_4$$ hence the long exact sequence of the $\#A^1$-homotopy sheaves 
$$\piA_3(\SL_2)\to  \piA_3(Q_7)\to \piA_3(Q_4)\to \piA_2(\SL_2) \to 0 .$$ 
Note that $\SL_2\simeq \A^2\setminus\{0\}$ hence by Proposition~\ref{van1}, $\coh^3(X, \piA_2(\A^2\setminus\{0\}))=0$.
Also note that by~\Cref{quadric-hom-type}(2), there is $\#A^1$-weak equivalence $Q_7\sim_{\#A^1}\#A^4\setminus\{0\}$, which induces isomorphism   $\KMW_4\simeq \piA_3(\#A^4\setminus\{0\})\simeq \piA_3(Q_7)$ hence we can deduce that the map $$\coh^3(X, \KMW_4)\to \coh^3(X, \piA_3(Q_4))$$ is surjective. 
    In the case $k=\-{\#F}_p$, the cohomology group  $\coh^3(X, \KMW_4)=0$ (see Proposition~\ref{van}), hence $ \coh^3(X, \piA_3(Q_4)) =0$.    
\end{proof}

\begin{proposition}\label{van}
Let $\mathbb{F}_p$ ($p$ a prime $\neq 2$) be the field with $p$ elements and let $\mathbb{F}_p \subset k \subset \overline{\mathbb{F}}_p$ be a field.
Let $A$ be a smooth affine algebra of dimension $d\geq 3$ over
$k$ and $X=\Spec(A)$. Then $\coh^{d}(X, K^{MW}_{d+1})=(0)$.    
\end{proposition}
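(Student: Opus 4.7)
The plan is to show that $H^d(X, \KMW_{d+1})$ is simultaneously a torsion abelian group and uniquely divisible, forcing it to vanish. First, I would apply the Gersten resolution for the strictly $\A^1$-invariant Nisnevich sheaf $\KMW_{d+1}$ on the smooth $k$-scheme $X$, which yields a surjection
$$\bigoplus_{x \in X^{(d)}} (\KMW_{d+1})_{-d}(k(x)) = \bigoplus_{x \in X^{(d)}} \KMW_1(k(x)) \twoheadrightarrow H^d(X, \KMW_{d+1}).$$
For any closed point $x \in X^{(d)}$, the residue field $k(x)$ is a finite extension of $k$, and since $k \subset \-{\F}_p$, we have $k(x) \subset \-{\F}_p$. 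In particular, every nonzero element of $k(x)$ is a root of unity, so $k(x)^{\times}$ is a torsion group.

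Next I would identify $\KMW_1(k(x))$ with $k(x)^{\times}$. Using the fundamental exact sequence $0 \to I^2 \to \KMW_1 \to \KM_1 \to 0$, this reduces to showing that $I^2(F) = 0$ for any subfield $F$ of $\-{\F}_p$. For odd $p$ this is classical: every element of $\-{\F}_p$ is a square, and $I^2$ vanishes on finite fields of odd characteristic by the Arason--Pfister and Milnor conjecture arguments; the $p = 2$ case is treated analogously via the characteristic-two form of Milnor--Witt K-theory. Hence $\KMW_1(k(x)) \cong k(x)^{\times}$ is torsion, and $H^d(X, \KMW_{d+1})$ is a quotient of a torsion group, therefore torsion.

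For the unique divisibility, I would combine the vanishing $I^2(k(x)) = 0$ with the Gersten resolution of $\&{I}^{d+2}$ (using $(\&{I}^{d+2})_{-d} = \&{I}^2$) to deduce $H^d(X, \&{I}^{d+2}) = 0$. Then the short exact sequence $0 \to \&{I}^{d+2} \to \KMW_{d+1} \to \KM_{d+1} \to 0$, together with $H^{d+1}(X, \&{I}^{d+2}) = 0$ for dimension reasons, produces an isomorphism $H^d(X, \KMW_{d+1}) \simeq H^d(X, \KM_{d+1})$. By \cite{Fas15} (as invoked in the proof of Proposition~\ref{torsion}), this target is uniquely divisible, i.e. a $\Q$-vector space. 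Since any uniquely divisible torsion abelian group is trivial, the argument concludes. The main delicate point is the appeal to \cite{Fas15} for the unique divisibility statement, which is stated there for $d \geq 5$ in Proposition~\ref{torsion} but whose proof extends verbatim to $d \in \{3,4\}$; every other ingredient is a routine residue-field computation combined with the Gersten resolution.
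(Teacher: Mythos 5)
Your argument takes a genuinely different route from the paper's. The paper identifies $H^d(X, \KMW_{d+1})$ with the orbit set $Um_{d+1}(A)/E_{d+1}(A)$ via \cite[Theorem 4.9]{Fas11}, and then kills that set directly by the Suslin--Vaserstein transitivity theorem \cite[Theorem 18.2]{SuVa76} for finite base fields, passing to $\overline{\F}_p$ by a descent-to-finite-type argument. Your proposal instead works entirely on the cohomology side, pairing a Gersten-resolution torsion bound with a unique-divisibility statement from \cite{Fas15}.

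There is, however, a genuine gap in the unique-divisibility step. The isomorphism $H^d(X,\KMW_{d+1}) \simeq H^d(X,\KM_{d+1})$ you deduce from the vanishing of $I^2$ on the residue fields is fine, and your torsion argument is correct. But the claim that $H^d(X,\KM_{d+1})$ is uniquely divisible, as invoked from \cite{Fas15} and used in the paper's Proposition~\ref{torsion}, is established there only for smooth affine varieties over an \emph{algebraically closed} field. The present proposition allows any field $\F_p \subset k \subset \overline{\F}_p$, in particular the finite fields $\F_q$ themselves. Over a finite base field the underlying mechanism behind the divisibility --- comparison with \'etale cohomology via Suslin rigidity --- is no longer available, and the group $H^d(X,\KM_{d+1})$ has no reason to be uniquely divisible in general (already $H^0(\Spec k, \KM_1) = k^{\times}$ shows the pattern fails in low dimension). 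You flag the issue of extending the citation from $d\geq 5$ to $d\in\{3,4\}$, but the more serious gap is the base-field hypothesis, not the range of $d$. Your argument is sound for $k=\overline{\F}_p$, but for the remaining fields in the allowed range you would need either an independent divisibility argument over finite fields, or a reduction to the algebraically closed case (at which point one might as well use the paper's more elementary route through unimodular rows).
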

\begin{proof}
    By~\cite[Theorem 4.9]{Fas11}, we have $\coh^{d}(X, K^{MW}_{d+1})\simeq Um_{d+1}(A)/E_{d+1}(A)$. 

{\bf Case 1:} Assume that $k$ is any finite field.

Then $A$ is a finitely generated ring. Hence by \cite[Theorem 18.2]{SuVa76}, we have $Um_{d+1}(A)/E_{d+1}(A)=(0)$.

{\bf  Case 2:} Assume $k=\overline{\mathbb{F}}_p$.
 
Let $\bar a =(a_1,\ldots,a_{d+1})$ be a unimodular vector in $Um_{d+1}(A)$. Then there exist $b=(b_1,\ldots,b_{d+1})$ such that $\Sigma_{i=1}^{d+1}a_ib_i=1$. 
Note that the vectors $a$ and $b$ are defined over $A'$ where $A'$ is a finitely generated algebra over a field $L$  such that $L$ is a finite extension of $\mathbb{F}_p$. Therefore $L$ is finite field and $A\cong A' \otimes_{L} k$.   Again by 
\cite[Theorem 18.2]{SuVa76}, there exists $\sigma \in E_{d+1}(A') \subset E_{d+1}(A)$ such that $\sigma (\bar a)=(1,0,\ldots,0)$.
Hence in this case also, we have $Um_{d+1}(A)/E_{d+1}(A)=(0)$.
\end{proof}

\begin{remark}
    We should note that in the setting of Proposition~\ref{vanishing-dim3}, for general algebraically closed field $k$, the argument of the proof of Proposition~\ref{vanishing-dim3} actually shows that the cohomology group $\coh^3(X, \piA_3(Q_4))$ is a quotient of the cokernel $$\Coker (\coh^3(X, \piA_3(\SL_2))\to \coh^3(X, \KMW_4)).$$ We already know that  $\coh^3(X, \KMW_4))$ is uniquely divisible from~\cite{Fas15}.
    If we know some torsion properties of this cokernel, we can conclude the desired vanishing of $\coh^3(X, \piA_3(Q_4))$ for any algebraically closed field.
\end{remark}

In the case $d=4$ we have the following vanishing result.
\begin{proposition}\label{vanishing-dim4}
Let $X$ be a smooth affine scheme of dimension $4$ over an algebraically closed field $k$ of characteristic $\neq 2$. Then $\coh^4(X, \piA_4(Q_{6}))=0.$
\end{proposition}
\begin{proof}
By~\cite[Theorem 4]{AF13} there is a surjection $$\coh^4(X, \piA_{3}(\A^{3}- 0))\to \coh^4(X, \piA_4(Q_{6})).$$
Since $k$ is an algebraically closed field, from ~\cite[proof of Theorem 2, subsection 6.4 on page 1058]{AF15}, we have
$$\coh^4(X, \piA_{3}(\A^{3}- 0))=0.$$\end{proof}

We now summarize the results of this section for the convenience of the reader.\begin{theorem} \label{vanishing}
Let $X$ be a smooth affine scheme of dimension $d$ over an algebraically closed field $k$ of characteristic $\neq 2$. Then
\begin{enumerate}
\item[$(1)$] if $d=2$,  $\coh^2(X, \piA_2(Q_{2}))=0.$ 

\item[$(2)$] if $d=3$, $\coh^3(X, \piA_3(Q_{4}))=0$ provided $k=\-{\F}_p$ (prime $p\neq 2$).

\item[$(3)$] if $d=4$, $\coh^4(X, \piA_4(Q_{6}))=0.$

\item[$(4)$] if $d\geq 5$, $\coh^d(X,\piA_d(Q_{2d-2}))=0,$ provided $char(k)=0$.
\item[$(5)$] if $d\geq 5$, then $\coh^{d}(X, \piA_{d}(Q_{2d-2}))$ is $d!\cdot (d-1)!\cdot (d-2)!-$torsion.
\item[$(6)$] if $d\geq 5$ and $d!\in k^{\x}$, then $\coh^{d}(X, \piA_{d}(Q_{2d-2}))$ is $ (d-1)!\cdot (d-2)!-$torsion.
\item[$(7)$] for $d\geq 5$, $\coh^d(X,\piA_d(Q_{2d-2}))=0,$ provided Conjecture~\ref{AF-conj} holds.
\end{enumerate}

\end{theorem}
\begin{proof}
(1) is Proposition~\ref{dim2}. (2) is Proposition~\ref{vanishing-dim3}. (3) is Proposition~\ref{vanishing-dim4}. (4), (5) and (6) are~\Cref{cor:vanishing} (1), (2) and (3) respectively. (7) is Proposition~\ref{Q-vanish-mod-conj}.
\end{proof}
\section{Cancellation and splitting of symplectic modules}\label{sec:cancel-and-split}

In this section, we prove the main results of the paper about the symplectic modules.  

We refer the reader to~\cite[Chapter VII, \S 5]{Lam06} for basics on symplectic modules/spaces over a commutative ring. For the convenience of the reader, we recall some basic notions relevant to our context.

\subsection{Symplectic groups and symplectic bundles}

Let us quickly recall the symplectic groups. Let $R$ be a commutative ring with unity and $\GL_n(R)$ the group of all invertible matrices over $R$.
Let $\psi_2 = \left(\begin{smallmatrix}0 & 1 \\ -1 & 0 \end{smallmatrix}\right)$ and let $\psi_{2n}=\perp_{1}^{n} \psi_2$ be the standard symplectic form of rank $2n$. Now we recall the definition of the even-order symplectic groups. For $n\geq 1$
$${\Sp}_{2n}(R)=\{M\in \GL_{2n}(R)\ | \ {^t  M}\psi_{2n} M=\psi_{2n}  \}.$$
This defines symplectic group schemes $\Sp_{2n}$ and a sequence of group schemes $\Sp_{2n-2}\to \Sp_{2n}$ defined by $A\mapsto \begin{pmatrix}
    A & 0\\
    0 & I_2
\end{pmatrix}$ 
the block sum of matrices where $I_2$ is the $2\x 2$-identity matrix.

\begin{definition}\label{def:symp-mod}

A symplectic $R-$module is a pair $(P, \phi)$ where $P$
is a finitely generated projective $R-$module and $\phi: P\times P \xrightarrow{} R$ is a bilinear map with $\phi(x,x)=0$ for all $x\in P$. We have an adjoint map
$adj: P \xrightarrow{} P^*:=\Hom(P,R)$
given by $x \mapsto \phi(x,-)$.
 We say that $P$ is non-degenerate if the adjoint map $adj$ is an isomorphism.

\end{definition}

In the paper, symplectic modules are always non-degenerate.

\subsection{Affine representability for symplectic bundles}\label{rep}
 Let $X$ be a $k$-scheme. For every integer $r\geq 1$, let $V_{r}^{\Sp}(X)$ be the set of isomorphism classes of rank $r$ symplectic vector bundles over $X.$ 
We recall the representability theorem from~\cite{AHW18} for symplectic vector bundles that will be used in~\Cref{sec:cancel-and-split}. We note that the theorem stated below is a special case of the more general result from~\cite{AHW18}. 
\begin{theorem}(\cite[Theorems 2.5.3, 3.3.3 and 4.1.2]{AHW18}, see also \cite[Remark 6.23]{Sch17}) \label{thm:rep:Sp}
   Let $X$ be a smooth affine scheme over a field $k$. 
Then for any even integer $r\geq 2$, there is a bijection of sets $$V_{r}^{\Sp}(X)\simeq [X, \BSp_{r}]_{\A^1}$$ 
which is functorial in $X$. 
\end{theorem}

Let $R$ be a commutative ring. We recall  
the \emph{symplectic hyperbolic plane} $\#H(R)$ over $R$ defined by rank 2 free $R$-module $R\oplus R$ endowed with the bilinear form associated to the $2\x 2$-matrix $\left(\begin{smallmatrix}0 & 1 \\ -1 & 0 \end{smallmatrix}\right)$.

\subsection{Cancellation of symplectic module}
We prove the following result about the cancellation of symplectic modules.

\begin{theorem}\label{cancel2n-2}
    Let $X=\Spec R$ be a smooth affine scheme of dimension $2n$ $(n\geq 2$) over an algebraically closed field $k$. Let $(2n)!$ be invertible in $k.$ Then the map 
    $$V_{2n-2}^{\Sp}(X)\to V_{2n}^{\Sp}(X)$$ is injective. In other words, if two symplectic rank $2n-2-$vector bundles $\@E$ and $\@E'$ are stably isomorphic, \ie $\@E \oplus \#H(R)\simeq \@E'\oplus \#H(R)$, then  $\@E \simeq \@E'$. 
\end{theorem}
\noindent
\textbf{Some preliminaries and computations.} 

To aid in the proof of the above theorem, we need to establish some vanishing result for a cohomology group of a homotopy sheaf. In order to do that,
let us first recall some notation from \cite[Section 8]{PD22}. 
We have the following commutative diagram 
\begin{equation}\label{eqn:fib-F}
\xymatrix{
\A^{2n}\setminus\{0\} \ar@{=}[r] \ar[d] & \A^{2n}\setminus\{0\}\simeq \Sp_{2n}/\Sp_{2n-2} \ar[r] \ar[d] &  \star\ar[d] \\
F'_{n-1} \ar[r] \ar[d] & \BSp_{2n-2} \ar[r] \ar[d] & \BSp \ar@{=}[d]\\
F'_n \ar[r] & \BSp_{2n} \ar[r] & \BSp 
}
\end{equation}
where all the rows and columns are  $\A^1-$fiber sequences, where $F'_{n-1}$ and $F'_n$ are the homotopy fibers of the naturals maps $\BSp_{2n-2} \rightarrow \BSp $ and $\BSp_{2n}  \rightarrow \BSp$ respectively. We note the following vanishing result.
\begin{proposition}
    Let $k$ be an algebraically closed field such that $(2n)!$ is invertible in $k$ ($n\geq 2$). Let $X$ be a smooth affine $k-$scheme of dimension $2n$. Then (in the notation above)
\begin{equation}\label{lem:vanish}
 \coh^{2n}(X, \piA_{2n}(F'_{n-1}))=0.   
 \end{equation}
 \end{proposition}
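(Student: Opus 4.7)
The plan is to extract the vanishing from the leftmost column fiber sequence $\A^{2n}-0 \to F'_{n-1} \to F'_n$ in diagram~\eqref{eqn:fib-F}, combined with Fasel's theorem~\cite[Theorem 3]{Fas21}, which under the hypothesis $(2n)! \in k^{\times}$ gives $H^{2n}(X, \piA_{2n}(\A^{2n}-0)) = 0$.

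The key intermediate claim is that the Nisnevich sheaf $\piA_{2n}(F'_n)$ itself vanishes. Granting this, the long exact sequence of $\A^1$-homotopy sheaves associated to $\A^{2n}-0 \to F'_{n-1} \to F'_n$ produces a surjection of strictly $\A^1$-invariant sheaves $\piA_{2n}(\A^{2n}-0) \twoheadrightarrow \piA_{2n}(F'_{n-1})$. Writing $K$ for its kernel and using $H^{2n+1}(X, K) = 0$ (since $\dim X = 2n$), the long exact cohomology sequence shows that $H^{2n}(X, \piA_{2n}(\A^{2n}-0))$ surjects onto $H^{2n}(X, \piA_{2n}(F'_{n-1}))$, and Fasel's theorem kills the source.

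To verify the intermediate claim, I would stabilize upward. The analog of diagram~\eqref{eqn:fib-F} with $n$ replaced by $n+1$ supplies an $\A^1$-fiber sequence $\A^{2n+2}-0 \to F'_n \to F'_{n+1}$. Morel's connectivity estimate $\piA_j(\A^r-0) = 0$ for $j \leq r-2$ forces $\piA_{2n}(\A^{2n+2}-0) = \piA_{2n-1}(\A^{2n+2}-0) = 0$, and the relevant portion of the long exact sequence collapses to an isomorphism $\piA_{2n}(F'_n) \xrightarrow{\simeq} \piA_{2n}(F'_{n+1})$. It therefore suffices to check $\piA_{2n}(F'_{n+1}) = 0$. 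For this, I would use the defining fiber sequence $F'_{n+1} \to BSp_{2n+2} \to BSp$ together with the symplectic stabilization $\piA_j(BSp_{2m}) \xrightarrow{\simeq} \piA_j(BSp)$ for $j \leq 2m$, obtained by iterating the middle column of~\eqref{eqn:fib-F} (with $n$ replaced by $m$) and applying Morel's vanishing to each successive fiber $\A^{2m+2}-0$. At the degrees $j = 2n$ and $j = 2n+1$ both comparison maps $\piA_j(BSp_{2n+2}) \to \piA_j(BSp)$ are isomorphisms (since $2n+1 \leq 2n+2$), so the long exact sequence for $F'_{n+1} \to BSp_{2n+2} \to BSp$ forces $\piA_{2n}(F'_{n+1}) = 0$.

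The main obstacle is modest: it amounts to bookkeeping, namely making sure that the indexing of the symplectic filtration and the connectivity ranges of the successive fibers $\A^r-0$ line up at each step of the iterated long exact sequences so that Morel's vanishing for $\piA_j(\A^r-0)$ with $j \leq r-2$ can be applied uniformly and yield the required vanishing of $\piA_{2n}(F'_n)$.
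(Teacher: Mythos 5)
Your proposal is correct and follows the same route as the paper: pass the vanishing from $H^{2n}(X,\piA_{2n}(\A^{2n}-0))=0$ (Fasel's theorem) along the epimorphism $\piA_{2n}(\A^{2n}-0)\twoheadrightarrow \piA_{2n}(F'_{n-1})$ furnished by the fiber sequence $\A^{2n}-0\to F'_{n-1}\to F'_n$, using $\dim X=2n$ to control the cohomology. The paper simply cites~\cite[Section 8]{PD22} for the intermediate vanishing $\piA_{2n}(F'_n)=0$, whereas you re-derive it from Morel's connectivity of $\A^r-0$ and symplectic stabilization, so the substance of the argument is the same.
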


\begin{proof}
    From \cite[Section 8]{PD22},
we have the following computations of the $\A^1-$homotopy sheaves of $F'_{n-1}$ and $F'_n$.
\begin{align}\label{eq:homotopy gps of hom-fib}
    \piA_i(F'_{n-1})=\begin{cases}
        0  & \text{for} \ \ i\leq 2n-2\\
        \KMW_{2n} & \text{for} \ \ i=2n-1
    \end{cases}
\end{align}
and \begin{align*}
    \piA_i(F'_{n})=\begin{cases}
        0  & \text{for} \ \ i\leq 2n\\
        \KMW_{2n+2} & \text{for} \ \ i=2n+1.
    \end{cases}
\end{align*} Moreover, we have the following long exact sequence of $\A^1-$homotopy sheaves associated to the leftmost column fiber sequence in the diagram ~\eqref{eqn:fib-F}. 
$$ \cdots\to \piA_{2n+1}(F'_n)\to \piA_{2n}(\A^{2n}\setminus\{0\})\to \piA_{2n}(F'_{n-1}) \to \piA_{2n}(F'_n)=0$$
As a consequence, we have the following epimorphism of Nisnevich sheaves 
\begin{equation} \label{epi-pi}
    \piA_{2n}(\A^{2n}\setminus\{0\})\to \piA_{2n}(F'_{n-1}).
\end{equation}
 In particular, $\coh^{2n}(X, \piA_{2n}(\A^{2n}\setminus\{0\}))$ maps onto  $\coh^{2n}(X, \piA_{2n}(F'_{n-1}))$ since the Nisnevich cohomological dimension of $X$ is at most $2n$. From~\cite[Theorem 3]{Fas24}, $\coh^{2n}(X, \piA_{2n}(\A^{2n}\setminus\{0\}))=0$. (This is one place where the assumptions on the characteristic of the field $k$ come into play.) Hence, it follows that 
$ \coh^{2n}(X, \piA_{2n}(F'_{n-1}))=0.   $
\end{proof}

 Consider the Moore-Postnikov tower of the map $ \BSp_{2n-2}\to \BSp_{2n}$ and let $\BSp_{2n-2}[2n-1] $ be the $(2n-1)^{th}$ term of the tower. Then by~\Cref{MP-tower}(3), we have the following $\#A^1-$fiber sequence  $$ \K(\KMW_{2n}, 2n-1)  \to \BSp_{2n-2}[2n-1] \to \BSp_{2n}.$$
As noted in~\eqref{eqn:fib-F}, we have $\#A^1-$fiber sequence 
$$\#A^{2n}\setminus\{0\}\simeq \Sp_{2n}/\Sp_{2n-2} \to \BSp_{2n-2} \to \BSp_{2n}.$$
We have a commutative diagram 
 $$\xymatrix{
 \BSp_{2n-2} \ar[r] \ar[d] & \BSp_{2n} \ar@{=}[d]\\
 \BSp_{2n-2}[2n-1] \ar[r] & \BSp_{2n} 
}$$
By~\cite[Proposition 6.3.5]{Hov99} (dual statement for fiber sequences), we have the commutative diagram
$$\xymatrix{
\Omega \BSp_{2n} \ar[r]^-{\partial} \ar@{=}[d]  \ar@{=}[d] & \#A^{2n}\setminus\{0\} \ar[r] \ar[d]_-{\tau'} & \BSp_{2n-2} \ar[r] \ar[d]^{i_{2n-1}} & \BSp_{2n} \ar@{=}[d] \\
\Omega \BSp_{2n} \ar[r]^-{\Omega b_n}  & \K(\KMW_{2n}, 2n-1)  \ar[r] & \BSp_{2n-2}[2n-1] \ar[r]  & \BSp_{2n}
}$$
 By~\cite[Proposition 6.5.3]{Hov99}, for each $U\in \Sm/k$, the induced map 
$$\partial^*: [U, \Sp_{2n}]_{\#A^1}\simeq [U, \Omega \BSp_{2n}]_{\#A^1} \to [U,\#A^{2n}\setminus\{0\}]_{\#A^1}$$ is given by the action of an element $g\in \Sp_{2n}(U)$ on the base point $(1, 0,\dots, 0)\in (\#A^{2n}\setminus\{0\})(U)$ \ie assigning $g$ to its first row  $[1, 0, \dots, 0]\cdot g$. 
Thus, we can identify the map $$\partial: \Sp_{2n}\simeq \Omega \BSp_{2n} \to \#A^{2n}\setminus\{0\} $$ as the projection
 $$pr_1: \Sp_{2n}\to \#A^{2n}\setminus\{0\}$$ given by $g\mapsto [1, 0, \dots, 0]\cdot g$ sending a $2n\x 2n$-square matrix $g$ to its first row.   

Hence we have the commutative diagram
$$\xymatrix{
\Sp_{2n} \ar[r]^-{pr_1} \ar@{=}[d]  \ar@{=}[d] & \#A^{2n}\setminus\{0\} \ar[r] \ar[d]_-{\tau'} & \BSp_{2n-2} \ar[r] \ar[d]^{i_{2n-1}} & \BSp_{2n} \ar@{=}[d]\\
\Sp_{2n} \ar[r]^-{\Omega b_n}  & \K(\KMW_{2n}, 2n-1)  \ar[r] & \BSp_{2n-2}[2n-1] \ar[r]  & \BSp_{2n}}$$
Thus, $$\Omega b_{n} = \tau'\circ pr_1: \Sp_{2n} \to  \K(\KMW_{2n}, 2n-1)$$
in $\@H_{\Dot}(k).$
                                                           Hence \begin{equation}\label{eq:Omega-tau}
    \Omega b_{n}(\beta) = \tau' (pr_1(\beta)), \ \ \forall \ \beta \in [X, \Sp_{2n}]_{\#A^1}.
\end{equation}
We note the following identities.
\begin{proposition} \label{prop:Omegauptosign} Let $X$ be a smooth $k-$scheme of dimension $2n$ over an algebraically closed field $k$ of characteristic $\neq 2.$  Then under the reduction map $$\xymatrix{
 \coh^{2n-1}(X, \KMW_{2n})\ar[r]^{\tau} \ar[d]^{\simeq} & \coh^{2n-1}(X, \KM_{2n})\ar[d]^{\simeq} \\
[X, \K(\KMW_{2n}, 2n-1)]_{\#A^1}\ar[r]^{\tau} & [X, \K(\KM_{2n}, 2n-1)]_{\#A^1}
}$$ we have for $\beta \in [X, \Sp_{2n}]_{\#A^1}$
$$\tau\circ \tau' \circ pr_1(\beta)=\tau\circ (\Omega b_{n})(\beta)=\pm(\Omega c_{2n})(\beta).$$
   \end{proposition}
   \begin{proof}
       The first equality is established in~\eqref{eq:Omega-tau}.        
For the second equality consider the commutative diagram $$\xymatrix{
 \Sp_{2n-2} \ar[r] \ar[d] & \Sp_{2n} \ar[d]\\
 \SL_{2n-2}\ar[r] & \SL_{2n} 
}$$ hence the commutative diagram
$$\xymatrix{
 \BSp_{2n-2} \ar[r] \ar[d] & \BSp_{2n} \ar[d]\\
 \BSL_{2n-2}\ar[r] & \BSL_{2n} 
}$$
hence by naturality of $k-$invariants we have the commutative diagram  
\begin{equation}\label{diag:euler-borel}
    \xymatrix{
\BSp_{2n} \ar[r]^-{b_n} \ar[d] & \K(\KMW_{2n}, 2n)\ar@{=}[d]\\
\BSL_{2n} \ar[r]^-{e_{2n}}& \K(\KMW_{2n}, 2n)
}\end{equation} where $e_{2n} \in \coh^{2n}(\BSL_{2n}, \KMW_{2n})\simeq [\BSL_{2n}, \K(\KMW_{2n}, 2n)]_{\#A^1}$ is the ($k$-invariant) universal Euler class and $b_{n} \in \coh^{2n}(\BSp_{2n}, \KMW_{2n})$ is the universal Borel class. That the diagram commutes follows from~\cite[Proposition 4.3(ii)]{HW19}. In the loc. cit. Pontryagin class $p_n(\Sp_{2n})$ is called Borel class in~\cite{PW21}. By \cite[Theorem 1 and Proposition 5.8]{AF16a}, the $k$-invariant Euler class is the universal Chow-Witt Euler class up to a unit of $\KMW_0(k)=\#Z$ (note $k$ is an algebraically closed field) and under the reduction map $\tau: \coh^{2n-1}(X, \KMW_{2n})\to \coh^{2n-1}(X, \KM_{2n}) $, the Euler class maps to the Chern class $c_{2n}$. Thus, we have 
$$\tau\circ (\Omega b_{n})(\beta)= \tau(\Omega e_{2n} (\beta))=\pm(\Omega c_{2n})(\beta)$$
where $\beta \in [X, \Sp_{2n}]$ and $c_{2n}$ is the universal Chern class in $\coh^{2n}(\BSL_{2n}, \KM_{2n})$. 
  \end{proof}

 We now proceed with the proof of the Theorem~\ref{cancel2n-2}.
 \begin{proof}[\bf Proof of~\Cref{cancel2n-2}:]
By the representability result in Subsection~\ref{rep}, we have the commutative diagram 
$$\xymatrix{
V_{2n-2}^{\Sp}(X)\ar[d]^{\simeq} \ar[r] &  V_{2n}^{\Sp}(X)\ar[d]^{\simeq} \\
[X, \BSp_{2n-2}]_{\A^1} \ar[r] & [X, \BSp_{2n}]_{\A^1}
}$$ Thus to show the injectivity of the top horizontal arrow it suffices to show the injectivity of the bottom horizontal map. We show this by analyzing the Moore-Postnikov tower (see Proposition~\ref{MP-tower} for the notation) for the $\A^1-$fiber sequence 
    $$F'_{n-1} \to \BSp_{2n-2} \to \BSp.$$ Note that $\piA_1(\BSp)=0$.
    Recall from Proposition~\ref{MP-tower} that there is a sequence of $\#A^1-$fibrations
    $$\cdots \to E_r\to E_{r-1}\to\cdots\to E_2\to E_1\to E_0=\BSp$$
    such that 
    $\BSp_{2n-2} \simeq \hlim_r E_r$ and for each $r\geq 1$, there is an $\#A^1-$fiber sequence 
    $$\K(\piA_r(\BSp_{2n}), r)\to E_r\xrightarrow{q_r} E_{r-1}.$$
    Since as noted in~\eqref{eq:homotopy gps of hom-fib}, the first non-trivial $\#A^1-$homotopy sheaf of the homotopy fiber $F'_{n-1}$ is in the degree $2n-1$, the non-trivial part of the Moore-Postnikov tower is 
    $$\xymatrix{
   \K(\piA_{2n}(F'_{n-1}), 2n) \ar[r]  & E_{2n} \ar[d] &\\
    \K(\KMW_{2n}, 2n-1) \ar[r] & E_{2n-1}\ar[d]^q & \\ 
     & E_{2n-2}=\BSp \ar[r] & \K(\KMW_{2n}, 2n)
    }$$
By the Moore-Postnikov tower associated to the map $\BSp_{2n}\to \BSp$ and since $\dim X=2n$, we can deduce that $[X, \BSp_{2n-2}]_{\A^1}\to [X, \BSp]_{\A^1}$ is bijective. Consider the diagram      $$\xymatrix{
[X, \BSp_{2n-2}]_{\A^1}\ar[r]\ar[d] & [X, \BSp]_{\A^1}\\
[X, E_{2n}]_{\A^1} \ar[r] & [X, E_{2n-1}]_{\A^1} \ar[u] 
}$$ 
where the leftmost vertical map is bijective, since $\dim X=2n$ (\cite[Corollary B.5]{Mor12}).
Hence to prove the injectivity of the top horizontal map in the above diagram, it is sufficient to prove that 
\begin{enumerate}
     \item the map $[X, E_{2n}]_{\A^1}\to [X, E_{2n-1}]_{\A^1}$ is injective and \label{part1}
      \item the map $[X, E_{2n-1}]_{\A^1}\to [X, \BSp]_{\A^1}$ is injective. \label{part2}

\end{enumerate}
\_{For part~\eqref{part1}:} we have the following $\A^1-$fiber sequence $$\K(\piA_{2n}(F'_{n-1}), 2n)\to E_{2n}\to E_{2n-1}$$
hence the following exact sequence 
$$\coh^{2n}(X, \piA_{2n}(F'_{n-1}))\to [X, E_{2n}]_{\A^1}\to [X, E_{2n-1}]_{\A^1}.$$
By Proposition \ref{lem:vanish}, $\coh^ {2n}(X, \piA_{2n}(F'_{n-1}))=0$, hence the map 
    $$[X, E_{2n}]_{\A^1}\to [X, E_{2n-1}]_{\A^1}$$ is injective.\\
\_{For part~\eqref{part2}:} consider the following  $\A^1-$fiber sequence 
    $$ E_{2n-1}\xrightarrow{q} \BSp \xrightarrow{p} \K(\KMW_{2n}, 2n)$$
Let $\xi \in [X, E_{2n-1}]_{\#A^1}$. Since $E_{2n-1}$ is fibrant, $\xi$ is represented by $\xi\in \Rmap_*(X_+, E_{2n-1})_0$ (in the notation of~\Cref{sec:fib-seq}). Applying $\Rmap_*(X_+, -)$ to the above fiber sequence and by \cite[Lemma 2.2.2]{Fas24} we have the fiber sequence   
$$\Rmap_*(X_+, \K(\KMW_{2n}, 2n-1)) \to \Rmap_*(X_+, E_{2n-1}) \xrightarrow{q_*} \Rmap_*(X_+, \BSp)$$ 
of pointed sets. Since $\BSp$ is an abelian group object in $\@H_*^{\#A^1}(k)$ as noted in~\cite[Proposition 8.1]{PD22}, hence applying the observation in~\Cref{sec:fib-seq} yields the following long exact sequence 
$$[X, \Sp]_{\#A^1}\simeq [X,\Omega \BSp]_{\#A^1}\xrightarrow{\Delta} \coh^{2n-1}(X, \KMW_{2n})\to [X, E_{2n-1}]_{\A^1}\xrightarrow{q_*} [X, \BSp]_{\A^1}.$$
Let $\xi$ and $\xi'$ be two elements in $[X, E_{2n-1}]_{\A^1}$ such that $q_*(\xi)=q_*(\xi')$. Then by the exactness of homotopy groups (and the pointed sets), there is $g\in \coh^{2n-1}(X, \KMW_{2n})$ such that $g\cdot\xi=\xi'.$ We point the sets at $\xi$ so that   
we have the long exact sequence 
$$[X, \Sp]_{\#A^1}\simeq [X,\Omega \BSp]_{\#A^1}\xrightarrow{\Delta(b_n, \xi)} \coh^{2n-1}(X, \KMW_{2n})\to [X, E_{2n-1}]_{\A^1}\to [X, \BSp]_{\A^1}$$
where the image of $\Delta$ is the stabilizer of $\xi$. Thus, to show $\xi=\xi'$ it suffices to show that $g $ belongs to the stabilizer of $\xi$, \ie $g$ is in the image of $\Delta(b_n, \xi).$
We prove this claim by adapting the arguments as in \cite[section 8, pages 14859-14861]{PD22} to our current setting. 

In several steps we prove the claim that $g\in \coh^{2n-1}(X, \KMW_{2n})$ is in the image of the map $$\Delta(b_n, \xi): [X, \Sp]_{\#A^1}\to  \coh^{2n-1}(X, \KMW_{2n}).$$
\noindent
\_{\bf Step 1:} By \cite[Corollary 3.0.2]{Fas24}, there is an isomorphism $\KMW_{2n}|_X\simeq\KM_{2n}|_X$ of sheaves ($\dim X=2n$ and base field $k$ is algebraically closed of characteristic $\neq 2$). Hence, it follows that the map $$\tau: \coh^{2n-1}(X, \KMW_{2n})\to  \coh^{2n-1}(X, \KM_{2n})$$ is an isomorphism. Thus to show $g\in \im (\Delta(b_n, \xi))$, it suffices to show that 
 $$\tau(g)\in \im (\tau\circ \Delta(b_n, \xi)).$$ 
\noindent
\_{\bf Step 2:} 
 We have by \cite[Proposition 6.3 and Theorem 8.2(42)]{PD22}, (replacing $GL$ by $\Sp$) and under the assumption $b_n(\xi)=0$, for $\beta\in [X, \Sp]_{\#A^1}$
\begin{equation}\label{eq:sum}
\Delta(b_n, \xi)(\beta)=(\Omega b_n)(\beta)+ \sum_{r=1} ^{n-1} (\Omega b_r)(\beta)\cdot b_{n-1-r}(\beta).
\end{equation}

Consider the commutative diagram in $\@H^{\A^1}_{\Dot}(k)$ (cf. \cite[page 14860]{PD22}).
\begin{equation}\label{diag:suslin}
    \xymatrix{
 & & \SL_{2n} \ar[r]^H  \ar[d] & \Sp_{4n} \ar[d] \ar[r] & \Sp \simeq \Omega \BSp \ar[r]^-{\Omega b_r} \ar[ld]^F & \K(\KMW_{2r}, 2r-1)\\
X\ar[r]^-{(a, b)} & Q_{4n-1}\ar[ru]^{\beta_{2n}} \ar[r]^{\alpha_{2n}} & \SL_{2^{2n-1}} \ar[r] & \SL & &  
}\end{equation}
where $H: \SL_{2n} \to  \Sp_{4n} $ is given by $A\mapsto \begin{pmatrix}
    A & 0 \\
    0 & (A^T)^{-1}
\end{pmatrix}.$ 
Here $\alpha_{2n}: Q_{4n-1} \to  \SL_{2^{2n-1}} $ is given by Suslin matrices (see ~\Cref{subsec:sus-matrices} for notation and \cite[\S5]{Suslin77} and \cite[Page 14836]{PD22} for further details) and $\beta_{2n}: Q_{4n-1} \to  \SL_{2n} $ is a lift of $\alpha_{2n}$ up to homotopy. Also recall that $(a, b)$ denotes the $k-$morphism of schemes $X\to Q_{4n-1}$ (as discussed under the definition of quadrics at the beginning of~\Cref{subsec:sus-matrices}).

Note that $ (\Omega b_r)(H\beta_{2n})\in \coh^{2r-1}(Q_{4n+1}, \KMW_{2r}).$
Recall that by \cite[Proposition 1.1.5(3)]{AF22}, we have

\begin{align*}
\coh^{2r-1}(Q_{4n-1}, \KMW_{2r})=&   \begin{cases}
       0 \ & \text{for} \ 2r-1\neq 2n-1 \\
       \KMW_0(k) \ &\text{for} \ 2r-1= 2n-1 \ (r=n).
    \end{cases}
\end{align*}
Hence, combining this with~\eqref{eq:sum}, it follows that  
\begin{equation}\label{eq:red}
    \Delta(b_n, \xi)(H[(\beta_{2n})])= (\Omega b_n)(H(\beta_{2n})) 
\end{equation}
in the cohomology group  $\coh^{2n-1}(Q_{4n-1}, \KMW_{2n}).$

\noindent
\_{\bf Step 3:}
In this step we show that in~\eqref{eq:red}, after applying the reduction map $\tau$ to the~\eqref{eq:red} simplifies as $$ \tau\circ (\Omega b_n)\circ H=(\Omega c_{2n})\circ F\circ H.$$
To see this, consider the commutative diagram induced by~\eqref{diag:euler-borel} $$\xymatrix{
\BSp \ar[r]^-{b_n} \ar[d]^F & \K(\KMW_{2n}, 2n) \ar[d]^{\tau}  \\
\BSL \ar[r]^-{c_{2n}} & \K(\KM_{2n}, 2n)  
}$$ which induces the commutative diagram 
$$\xymatrix{
\Sp\simeq \Omega \BSp \ar[r]^-{\Omega b_n} \ar[d]^F & \K(\KMW_{2n}, 2n-1) \ar[d]^{\tau} \\
\SL\simeq \Omega \BSL \ar[r]^-{\Omega c_{2n}} & \K(\KM_{2n}, 2n-1)  
}$$
Hence we get
\begin{equation}\label{eq:borel-chern}
    \tau\circ (\Omega b_n)\circ H=(\Omega c_{2n})\circ F\circ H: \SK_1(X)=[X, \SL]_{\A^1}\to \coh^{2n-1}(X, \KM_{2n}).
    \end{equation}

    \noindent
\_{\bf Step 4:} 
Let $F: \Sp\to \SL$ be the forgetful map and consider the composite map
$$F\circ H: \SK_1(X)=[X, \SL]_{\#A^1}\xrightarrow{H}  [X, \Sp]_{\#A^1}  \xrightarrow{F} [X, \SL]_{\#A^1}.$$ 
In this step we show that for each $(a, b)\in \Hom(X, Q_{4n-1})$
given by the tuple $(a, b)$,  $$F\circ H(\beta_{2n}(a, b))=2[\beta_{2n}(a, b)].$$

To see this, note that by \cite[Lemma 5.3]{Suslin77} (applied to $r=2n-1$ in loc. cit. so $r$ is of the form $4k+3$ or $4k+1$ in Lemma 5.3 in loc. cit.), we have $$[\alpha_{2n}(a, b)]=[(\alpha_{2n}(a, b)^T)^{-1}]$$
Consider $$F\circ H(\alpha_{2n}(a, b))=\bigg[\begin{pmatrix}
    \alpha_{2n}(a, b) & 0\\
    0 & (\alpha_{2n}(a, b)^T)^{-1}
\end{pmatrix}\bigg]=[\alpha_{2n}(a, b)]+[(\alpha_{2n}(a, b)^T)^{-1}]=2[\alpha_{2n}(a, b)].$$
Note that $[\alpha_{2n}(a, b)]=[\beta_{2n}(a, b)]$ in $\SK_1(X)$.  As a consequence
\begin{equation}\label{eq:FH=2}
    F\circ H(\beta_{2n}(a, b))=2[\beta_{2n}(a, b)].
\end{equation}

\noindent
\_{\bf Step 5:} 
Recall the commutative diagram from the proof of~\Cref{prop:Omegauptosign}
$$\xymatrix{
\Sp_{2n} \ar[r]^-{pr_1} \ar@{=}[d]  \ar@{=}[d] & \#A^{2n}\setminus\{0\} \ar[r] \ar[d]_-{\tau'} & \BSp_{2n-2} \ar[r] \ar[d]^{i_{2n-1}} & \BSp_{2n} \ar@{=}[d]\\
\Sp_{2n} \ar[r]^-{\Omega b_n}  & \K(\KMW_{2n}, 2n-1)  \ar[r] & \BSp_{2n-2}[2n-1] \ar[r]  & \BSp_{2n}}$$
For the map $\tau'$ from the above diagram, we claim that the induced map 
  $$\tau'_*:[X,\A^{2n}\setminus \{0\}]_{\A^1}\to [X, \K(\KMW_{2n}, 2n-1)]_{\A^1}\simeq  \coh^{2n-1}(X, \KMW_{2n})$$
is surjective, under the assumption that $\dim X=2n$. 

To see this, by applying $[X,-]$ to the above diagram, consider the commutative diagram of long exact sequences of pointed sets 
$$\xymatrix{
[X, \Sp_{2n}]_{\#A^1} \ar[r] \ar@{=}[d]  \ar@{=}[d] & [X, \#A^{2n}\setminus\{0\}]_{\#A^1} \ar[r] \ar[d]_-{\tau'_*} & [X, \BSp_{2n-2}]_{\#A^1} \ar[r] \ar[d]^{{i_{2n-1}}_*} & [X, \BSp_{2n}]_{\#A^1} \ar@{=}[d]\\
[X, \Sp_{2n}]_{\#A^1} \ar[r]  & [X, \K(\KMW_{2n}, 2n-1)]_{\#A^1}  \ar[r] & [X, \BSp_{2n-2}[2n-1]]_{\#A^1} \ar[r]  & [X, \BSp_{2n}]_{\#A^1}
}$$ 
By~\cite[Appendix B]{Mor12} (for a detailed proof see~\cite[Proposition 6.2]{AF14}) it follows that the map  $${i_{2n-1}}_* : [X, \BSp_{2n-2}]_{\#A^1} \to [X, \BSp_{2n-2}[2n-1]]_{\#A^1}  $$ is surjective, as $\dim X=2n$. By diagram chase it follows that the map $$\tau'_*:[X,\A^{2n}\setminus \{0\}]_{\A^1}\to [X, \K(\KMW_{2n}, 2n-1)]_{\A^1}\simeq  \coh^{2n-1}(X, \KMW_{2n})$$
is surjective.

 Finally, we have all the ingredients to finish the proof.
 
 \noindent
\_{\bf Step 6:} Recall that by Step 1 we reduced to show that 
 $$\tau(g)\in \im (\tau\circ \Delta(b_n, \xi)).$$  
\begin{alignat*}{3}
    \tau(g) = & \pm 2(2n-1)! \tau(\alpha) &&{\text{\small(as $\coh^{2n-1}(X, \KM_{2n})$ is $(2n-1)!$-divisible~\cite[Lem. 4.0.3]{Fas24})}} \\
    = &  \pm 2(2n-1)! \tau(\tau'_*([(a, b)]))  &&\text{(since $\tau'_*$ is surjective by {\bf Step 5})}\\
    = & \pm2\tau\bigg(\tau'_*\bigg(\dfrac{(2n-1)!}{2}h([(a, b)])\bigg)\bigg)  &&\text{(as $h=2$ in $\KMW_0(k)$ as $k$ is algebraically closed)} \\
    = & \pm2 \tau(\tau'_*( {\psi_{2n}}_*([a, b])))  &&\text{(follows by~\Cref{prop:suslin-mult}(2) as $\dim X=2n$ and $(2n)!\in k^\x$)}\\
    = & \pm 2\tau(\tau'_*([a_0^{(2n-1)!}, a_1, \cdots, a_{2n-1}])) &&\text{(by definition of ${\psi_{2n}}_*$)}\\
    = & \pm 2\tau(\tau'_*(pr_1(\beta_{2n}([a, b])))) \\
    = & \pm 2(\Omega c_{2n})(\beta_{2n}([a, b]))  &&\text{(this follows from Proposition~\ref{prop:Omegauptosign})}\\
    = & \pm 2(\Omega c_{2n})\circ F\circ H(\beta_{2n}([a, b]))  &&\text{(by \eqref{eq:FH=2} in {\bf Step 4})}\\
    = &  \pm \tau\circ(\Omega b_n)(H(\beta_{2n}([a, b])))  &&\text{(by \eqref{eq:borel-chern} in {\bf Step 3})} \\
    = & \pm \tau(\Delta(b_n, \xi)(H\beta_{2n}([a, b])))  &&\text{(by \eqref{eq:red} in {\bf Step 2}).} 
\end{alignat*}

Hence $g$ is in the image of $\Delta$, thus $g$ is a stabilizer of $\xi$. Therefore we have $\xi=g\cdot \xi=\xi'.$   
Finally, we get that 
$$[X, E_{2n-1}]_{\A^1}\to [X, \BSp]_{\A^1}$$ is injective.     
\end{proof}
\begin{remark} In this remark we present an example that will establish that our ~\Cref{cancel2n-2}
is optimal. In fact, here, we will observe that Mohan Kumar's counter example \cite{MK85} with suitable modification works here as well.
Details are as follows.
  
   Consider for each even $r\geq 2$ the map
$$\phi_r: V_{r}^{\Sp}(X) \xrightarrow{} V_{r+2}^{\Sp}(X)$$ which sends $$(V,\psi_r) \mapsto (V\oplus \#H(R), \psi_r \perp \psi_2).$$ 
 Let $p$ be an odd prime number. Let $k$ be an algebraically closed field of characteristic $\neq 2.$ By \cite[Theorem 6.1, Corollary 6.2]{Wendt21} there exists $X=\Spec(R)$ a smooth affine variety of dimension $p+2$ over an algebraically closed field $k$ and a non-trivial stably free $R$-module $P$ of rank $p-1$ with a symplectic structure. In particular, this means that for such an $X$, the map $$ \phi_{p-1}: V_{p-1}^{\Sp}(X) \xrightarrow{} V_{p+1}^{\Sp}(X)$$  
is not injective. 
 
By \Cref{cancel2n-2}, applied to $X\x \#A^1_k$ where $X$ as in the above paragraph, the map $$V_{p+1}^{\Sp}(X\x \#A^1_k) \to V_{p+3}^{\Sp}(X\x \#A^1_k)$$ is injective. However, as noted above and the $\#A^1$-invariance of symplectic bundles on smooth affine $k$-schemes by~\cite[Theorem 3.3.3]{AHW18}, the map  $$ \phi_{p-1}: V_{p-1}^{\Sp}(X\x \#A^1_k) \xrightarrow{} V_{p+1}^{\Sp}(X\x \#A^1_k)$$  
is not injective. Hence, the cancellation result in~\Cref{cancel2n-2} is optimal as $\dim X\x \#A^1_k=p+3$.
 \end{remark}

\subsection{Splitting of symplectic module}

We prove the following result about the splitting of a symplectic module.
\begin{theorem}\label{thm:splitting}
Let $R$ be a smooth affine algebra over a perfect field $k$ and let $X=\Spec (R)$.  Let $P$ be a symplectic $R-$module of rank $2n$ ($n\geq 1$).

\begin{enumerate}
    \item Assume that $\dim(R)=2n$. Then $P\cong Q \oplus \#H(R) $ if and only if the Euler class $e_{2n}(P)=0$ in $\widetilde{\CH}^{2n}(X)$.

\item Assume that $\dim(R)=2n+1$, and $\coh^{2n+1}(X, \piA_{2n}(\mathbb{A}^{2n}\setminus\{0\}))=0.$
Then $P\cong Q \oplus \#H(R) $ if and only if the Euler class $e_{2n}(P)=0$ in $\widetilde{\CH}^{2n}(X)$.

Moreover, if the base field $k$ is algebraically closed of characteristic 0, then the splitting criterion holds.
\end{enumerate} 

\end{theorem}

\begin{proof}
\noindent
\begin{enumerate}
    \item It is clear that if $P\cong Q \oplus \#H(R)$, then the Euler class $e_{2n}(P)=0$ in $\widetilde{\CH}^{2n}(X)$.

    Conversely, let us assume $e_{2n}(P)=0$ in $\widetilde{\CH}^{2n}(X)$.  Let $V_{2n}^{\Sp}(X)$ denote the isomorphism class of rank $2n$ symplectic bundles over $X$. As noted in~\Cref{thm:rep:Sp}, we have $V_{2n}^{\Sp}(X) \cong [X, \BSp_{2n}]_{\mathbb{A}^1}$. Hence the projective module $P$ corresponds to a map $\xi: X\to \BSp_{2n}$ in the homotopy category.
  We need to show that if $e_{2n}(P)=0$ in $\widetilde{\CH}^{2n}(X)$ then the associated map $\xi: X\to \BSp_{2n}$ lifts to a map $\xi': X\to \BSp_{2n-2}$ upto homotopy.  
    $$\xymatrix{ 
    & \BSp_{2n-2} \ar[d] \\
     X \ar@{-->}[ru]^{\xi'} \ar[r]^{\xi} & \BSp_{2n}
    }$$ 
    Consider the Moore-Postnikov tower for the fiber sequence $$F'_{n-1} \to \BSp_{2n-2} \to \BSp.$$ 
 $$\xymatrix{
   \K(\piA_{2n}(F'_{n-1}), 2n) \ar[r]  & E_{2n} \ar[d]^{q_{2n}} &\\
    \K(\KMW_{2n}, 2n-1) \ar[r] & E_{2n-1}\ar[d]^{q_{2n-1}} \ar[r] & \K(\piA_{2n}(F'_{n-1}), 2n+1)\\ 
     & \BSp\simeq E_{2n-2} \ar[r] & \K(\KMW_{2n}, 2n)
    }$$
The above tower yields the following two fiber sequences
\begin{equation}\label{fib-1}
    E_{2n} \xrightarrow{q_{2n}} E_{2n-1} \xrightarrow{} \K(\piA_{2n}(F'_{n-1}), 2n+1)
\end{equation}
\begin{equation}\label{fib-2}
      E_{2n-1} \xrightarrow{q_{2n-1}} E_{2n-2} \xrightarrow{} \K(\KMW_{2n}, 2n)
\end{equation}
From (\ref{fib-1}), we have the following exact sequence 
$$[X, E_{2n}]_{\#A^1} \xrightarrow{(q_{2n})_*} [X, E_{2n-1}]_{\#A^1} \xrightarrow{} [X, \K(\piA_{2n}(F'_{n-1}), 2n+1)]_{\#A^1}$$
Note that $[X, \K(\pi_{2n}(F'_{n-1}), 2n+1)]_{\#A^1} \cong \coh^{2n+1}(X, \pi_{2n}(F'_{n-1}))$ which is a trivial group, since $2n+1>2n=\dim X$. Hence $(q_{2n})_{*}$ is surjective.

From (\ref{fib-2}), we have the following exact sequence
$$[X, E_{2n-1}]_{\#A^1} \xrightarrow{(q_{2n-1})_*} [X, E_{2n-2}]_{\#A^1} \xrightarrow{} [X, \K(\KMW_{2n}, 2n)]_{\#A^1}$$
The diagram $$\xymatrix{
[X, E_{2n-2}]_{\#A^1} \ar[r] \ar[d]^{\simeq} & [X, \K(\KMW_{2n}, 2n)]_{\#A^1}\ar@{=}[d]\\
[X, \BSp_{2n}]_{\#A^1}\ar[r]^-{b_n} & [X, \K(\KMW_{2n}, 2n)]_{\#A^1}
}$$ commutes such that the map 
$$[X, \BSp_{2n}]_{\#A^1}\xrightarrow{b_n} [X, \K(\KMW_{2n}, 2n)]_{\#A^1}$$
is taking the $n^{th}$ Borel class $b_n(\xi)$ which agrees with the Euler class $e_{2n}(P)$ (see~\cite[page 14858 (40)]{PD22}). Note that we have $[X, \K(\KMW_{2n}, 2n)]\cong \coh^{2n}(X,\KMW_{2n})$ which is isomorphic to $\widetilde{\CH}^{2n}(X)$.

The Euler class $e_{2n}(P)=0$ in  $\widetilde{\CH}^{2n}(X)$ \ie $b_n(\xi)=*$ implies that  the sequence of pointed sets $$ [X, E_{2n-1}]_{\#A^1} \xrightarrow{(q_{2n-1})_*} [X, E_{2n-2}]_{\#A^1} \xrightarrow{b_n} [X, \K(\KMW_{2n}, 2n)]_{\#A^1}$$ is exact. Hence there is $a\in [X, E_{2n-1}]_{\#A^1}$ such that $(q_{2n-1})_*(a)=\xi.$ As observed above the map 
$$[X, E_{2n}]_{\#A^1} \xrightarrow{(q_{2n})_*} [X, E_{2n-1}]_{\#A^1} $$
is surjective hence there is $a'\in [X, E_{2n}]$ such that $(q_{2n})_*(a')=a$. Furthermore, since the $\dim X=2n$ the maps $$ [X, \BSp_{2n-2}]\to [X, E_{r}]_{\#A^1}\to [X, E_{2n}]_{\#A^1}$$ are surjective for $r\geq 2n$. Hence the preimage $\xi'\in [X, \BSp_{2n-2}]_{\#A^1}$ of $a'$ is the desired lift of the map $\xi\in [X, \BSp_{2n}]_{\#A^1}.$

\item  We follow the same arguments as in part $(1)$ with suitable modification. 

We need to show that if $e_{2n}(P)=0$ in $\widetilde{\CH}^{2n}(X)$ then the associated map $\xi: X\to \BSp_{2n}$ lifts to a map $\xi': X\to \BSp_{2n-2}$ upto homotopy \ie the following diagram commutes in $\@H^{\#A^1}(k)$
    $$\xymatrix{ 
    & \BSp_{2n-2} \ar[d] \\
     X \ar@{-->}[ru]^{\xi'} \ar[r]^{\xi} & \BSp_{2n}
    }$$ 
    Consider the Moore-Postnikov tower for the fiber sequence $$F'_{n-1} \to \BSp_{2n-2} \to \BSp.$$ 
 $$\xymatrix{
   \K(\piA_{2n}(F'_{n-1}), 2n) \ar[r]  & E_{2n} \ar[d]^{q_{2n}} &\\
    \K(\KMW_{2n}, 2n-1) \ar[r] & E_{2n-1}\ar[d]^{q_{2n-1}} \ar[r] & \K(\piA_{2n}(F'_{n-1}), 2n+1)\\ 
     & \BSp\simeq E_{2n-2} \ar[r] & \K(\KMW_{2n}, 2n)
    }$$
The above tower yields the following two fiber sequences 
\begin{equation}\label{fib-3}
    E_{2n} \xrightarrow{q_{2n}} E_{2n-1} \xrightarrow{} \K(\piA_{2n}(F'_{n-1}), 2n+1)
\end{equation}
\begin{equation}\label{fib-4}
      E_{2n-1} \xrightarrow{q_{2n-1}} E_{2n-2} \xrightarrow{} \K(\KMW_{2n}, 2n)
\end{equation}
From (\ref{fib-3}), we have the following exact sequence 
$$[X, E_{2n}]_{\#A^1} \xrightarrow{(q_{2n})_*} [X, E_{2n-1}]_{\#A^1} \xrightarrow{} [X, \K(\piA_{2n}(F'_{n-1}), 2n+1)]_{\#A^1}$$
Note that $[X, \K(\pi_{2n}(F'_{n-1}), 2n+1)]_{\#A^1} \cong \coh^{2n+1}(X, \pi_{2n}(F'_{n-1}))$. 
By~\eqref{epi-pi}, there is an epimorphism of Nisnevich sheaves 
$$\piA_{2n}(\mathbb{A}^{2n}\setminus\{0\}) \to \piA_{2n}(F'_{n-1}).$$ 
This induces a surjective map of abelian groups 
$$\coh^{2n+1}(X, \piA_{2n}(\mathbb{A}^{2n}\setminus\{0\})) \xrightarrow{} \coh^{2n+1}(X, \piA_{2n}(F'_{n-1}))$$ since $\dim X=2n+1$.
Now  by hypothesis $\coh^{2n+1}(X, \piA_{2n}(\mathbb{A}^{2n}\setminus\{0\}))=0$, hence we have $\coh^{2n+1}(X, \pi_{2n}(F'_{n-1}))=0$.
Hence $(q_{2n})_{*}$ is surjective.

From (\ref{fib-4}), we have the following exact sequence
$$[X, E_{2n-1}]_{\#A^1} \xrightarrow{(q_{2n-1})_*} [X, E_{2n-2}]_{\#A^1} \xrightarrow{} [X, \K(\KMW_{2n}, 2n)]_{\#A^1}$$
The diagram $$\xymatrix{
[X, E_{2n-2}]_{\#A^1} \ar[r] \ar[d]^{\simeq} & [X, \K(\KMW_{2n}, 2n)]_{\#A^1}\ar@{=}[d]\\
[X, \BSp_{2n}]_{\#A^1}\ar[r]^-{b_n} & [X, \K(\KMW_{2n}, 2n)]_{\#A^1}
}$$ commutes such that the map 
$$[X, \BSp_{2n}]_{\#A^1}\xrightarrow{b_n} [X, \K(\KMW_{2n}, 2n)]_{\#A^1}$$
is taking the $n^{th}$ Borel class $b_n(\xi)$ which agrees with the Euler class $e_{2n}(P)$ (see~\cite[page 14858 (40)]{PD22}). Note that we have $[X, \K(\KMW_{2n}, 2n)]\cong \coh^{2n}(X,\KMW_{2n})$ which is isomorphic to $\widetilde{\CH}^{2n}(X)$.

The Euler class $e_{2n}(P)=0$ in  $\widetilde{\CH}^{2n}(X)$ \ie $b_n(\xi)=*$ implies that  the sequence of pointed sets $$ [X, E_{2n-1}]_{\#A^1} \xrightarrow{(q_{2n-1})_*} [X, E_{2n-2}]_{\#A^1} \xrightarrow{b_n} [X, \K(\KMW_{2n}, 2n)]_{\#A^1}$$ is exact. Hence there is $a\in [X, E_{2n-1}]_{\#A^1}$ such that $(q_{2n-1})_*(a)=\xi.$ As observed above the map 
$$[X, E_{2n}]_{\#A^1} \xrightarrow{(q_{2n})_*} [X, E_{2n-1}]_{\#A^1} $$
is surjective hence there is $a'\in [X, E_{2n}]$ such that $(q_{2n})_*(a')=a$. Furthermore, since the $\dim X=2n+1$ the maps $$ [X, \BSp_{2n-2}]\to [X, E_{r}]_{\#A^1}\to [X, E_{2n}]_{\#A^1}$$ are surjective for $r\geq 2n$. Hence the preimage $\xi'\in [X, \BSp_{2n-2}]_{\#A^1}$ of $a'$ is the desired lift of the map $\xi\in [X, \BSp_{2n}]_{\#A^1}.$

The remaining assertion follows in view of~\Cref{Vanishing:ABH}.
\end{enumerate}
\end{proof}

\begin{remark}
Theorem \ref{thm:splitting}(1) can be deduced from \cite[Theorem 8.14, Remark 8.15]{Mor12} and \cite[Proposition 4.1]{Bass69}. However, our proof is short, independent and can be seen as an application of affine representability result of Asok-Hoyois-Wendt \cite{AHW18}. A similar remark can be made for  Theorem \ref{thm:splitting}(2), in view of \cite{ABH23} and   \cite[Proposition 4.1]{Bass69}.
\end{remark}

 \section{Comparison of Euler class group and Chow group}\label{Euler-Chow}

 In this section, we study the map between $(d-1)-$th Euler class group $\E^{d-1}(A)$ and the $(d-1)-$the Chow group $\CH^{d-1}(X)$ where $X=\Spec A$ is a smooth affine scheme of Krull dimension $d$ over a field $k$.
For the definition and properties of the Chow-group, we refer to~\cite{Fulton}. We recall the definition of Euler class group from \cite{BR02}. For more recent account, see~\cite[Definition 3.1.5]{AF22}.

\begin{definition} (\cite[Section~4]{BR02})
Let $A$ be a commutative Noetherian ring of dimension $d$.
Assume $2n\geq d+3$.
 Now we recall the definition of Euler class group $\E^{n}(A)$. Let $I$ be an ideal  of $A$ of height $n$ such that $I/I^2$ is generated by $n$ elements as an $A/I-$module. Let $\alpha, \beta : (A/I)^{n} \twoheadrightarrow I/I^2$ be two surjections. We say 
$\alpha$ and $\beta$ are \emph{related} if there exists $\sigma \in E_{n}(A/I)$ such that $\alpha\circ \sigma = \beta$. Here $E_{n}(R)$ is the subgroup of $n\x n-$matrices generated by elementary matrices of determinant 1 with entries in a commutative ring $R$. One checks that this relation is an equivalence relation on the set of all surjections from $(A/I)^{n}$ to $I/I^2$. An equivalence class of surjections
from $(A/I)^{n} \twoheadrightarrow I/I^2$ is called a {\it local orientaion}. A local orientation $\omega_I$ of $I$ is called a {\it global orientation} if a surjection (hence all the surjections in the equivalence class by \cite[Section~4]{BR02}) in the class of $\omega_I$ can be lifted to a surjection from $A^{n} \twoheadrightarrow I$. 

Let $G$ be the free abelian group on the set $B$ of equivalence classes of pairs $(\mathcal{I},\omega_{\mathcal{I}})$, where $\mathcal{I}\subset R$
is an ideal of height $n$ with the property that $\Spec(A/\mathcal{I})$ is connected, $\mu(\mathcal{I}/\mathcal{I}^2)=n$ and $\omega_{\mathcal{I}}: (A/\mathcal{I})^{n}\twoheadrightarrow \mathcal{I}/\mathcal{I}^2$
is a local orientation. Now let $I$ be any ideal of height $n$ such that $I/I^2$ is generated by $n$ elements as an $A/I-$module. Then $I$ has a unique decomposition, $I=\mathcal{I}_1 \cap \ldots \cap \mathcal{I}_k$, where each of $\Spec(A/\mathcal{I}_i)$ is connected, $\mathcal{I}_i$
are pairwise maximal, and $ht(\mathcal{I}_i)=n$. If $\omega_I$ is a local orientation, then it naturally induces local orientation 
$\omega_{\mathcal{I}_i}: (A/\mathcal{I}_i)^{n} \twoheadrightarrow \mathcal{I}_i/\mathcal{I}_i^{2}$ for each $i,$ $1\leq i \leq k$. Thus
by $(I, \omega_I)$, we mean $\Sigma (\mathcal{I}_i, \omega_i) \in G$.

Let $H$ be the subgroup of $G$ generated by the subset $S$  of $B$ given by  $(I, \omega_I)$ in $G$ such that $\omega_I$ is a global orientation. We define the $n-$th Euler class group of $R$ as 
$\E^{n}(A):=G/H$.

\end{definition}

In the following remark, we compare Euler class groups defined by Bhatwadekar--Sridharan \cite{BR02}, Das \cite{MD21} and Asok--Fasel \cite{AF22}.

\begin{remark} \label{remk-1}In \cite{AF22}, Asok--Fasel defined the Euler class group $\E^n(A)$ slightly different way. However it is observed by Asok and Fasel \cite[Remark 3.16]{AF22} that they are same.

If $A$ is an affine algebra of dimension $d$ over $\overline{\mathbb{F}}_p$ where ${\mathbb{F}_p}$ is a finite field with $p$ elements, then Das \cite[Section ~4]{MD21}
defined $\E^{d-1}(A)$. We denote this by $\E^{d-1}_{MD}(A)$.
It is observed by Das \cite{MD21} that for $d\geq 5$ the natural map
$\E^{d-1}(A) \rightarrow \E^{d-1}_{MD}(A)$ is an isomorphism.

In fact, for $d=4$, the same arguments also proves that the natural map $\E^3(A)\rightarrow \E^3_{MD}(A)$ is an isomorphism as $E_3(A/I)=\SL_3(A/I)$ where $A$ is an affine algebra over $\overline{\mathbb{F}}_p$ and $I$ is an ideal in $A$ of height $3$.

Therefore, for $d\geq 4$, we shall write $\E^{d-1}(A)$ for $\E^{d-1}(A) \cong \E^{d-1}_{MD}(A)$.
\end{remark}

We recall the map 
\begin{equation} \label{eqn:EtoCH}
 \psi: \E^{d-1}(A)\to \CH^{d-1}(X): (I,w_I) \mapsto (I), 
\end{equation}
 where $(I)$ denotes the fundamental cycle associated to the closed subscheme $\Spec A/I$ in $\Spec A$.

 If $A$ is an affine algebra over $\overline{\mathbb{F}}_p$, then in \cite[Definition 4.1]{MD21}, a well-defined Euler class
 $e_{d-1}(P)$ is defined in $\E^{d-1}(A)$ for a projective $A-$module $P$ of rank $d-1$ with $\wedge^{d-1}P \cong A$. We note that
 the above map $\phi$ take Euler class $e_{d-1}(P)$ to Chern class $c_{d-1}(P)$ in $\CH^{d-1}(A)$.

 \begin{proposition}
\label{prop1}
 Let $X$ be a smooth affine variety of dimension $d$ over an infinite field $k$. For $d\geq 4$,  we have the Hurewicz map $$H: [X, Q_{2d-2}]_{\A^1}\to \~{\CH}^{d-1}(X)=\coh^{d-1}(X, \KMW_{d-1})=[X, \K(\piA_{d-1}(Q_{2d-2}), d-1)]_{\A^1}$$ Then
\begin{enumerate}
    \item[$(1)$] $H$ is surjective.
    \item[$(2)$] the cohomology group $\coh^d(X, \piA_d(Q_{2d-2}))$ surjects onto the kernel of the Hurewicz map $H$.
    \\
    In particular if $\coh^d(X, \piA_d(Q_{2d-2}))=0$ then $H$ is injective.
\end{enumerate}
 \end{proposition}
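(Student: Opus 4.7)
\smallskip
\noindent
\textbf{Proof plan.} The plan is to apply the Moore--Postnikov machinery of Proposition~\ref{MP-tower} to the pointed space $Q_{2d-2}$ (viewed, if one likes, via the $\A^1$-fibre sequence $Q_{2d-2}\to Q_{2d-2}\to \star$). By the motivic Hurewicz theorem together with the Asok--Fasel computation $\piA_{d-1}(Q_{2d-2})\cong \KMW_{d-1}$, the space $Q_{2d-2}$ is $\A^1$-$(d-2)$-connected, and its first non-trivial Postnikov section is an $\A^1$-weak equivalence $Q_{2d-2}[d-1]\simeq K(\KMW_{d-1},d-1)$. Under the canonical identification $[X,K(\KMW_{d-1},d-1)]_{\A^1}=H^{d-1}(X,\KMW_{d-1})=\~{CH}^{d-1}(X)$, the Hurewicz map $H$ is precisely the map induced by the projection $Q_{2d-2}\to Q_{2d-2}[d-1]$.

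\smallskip
\noindent
\textbf{Step 1: truncation at level $d$.} First I would show that the canonical map
$$[X,Q_{2d-2}]_{\A^1}\longrightarrow [X,Q_{2d-2}[d]]_{\A^1}$$
is a bijection. Indeed, for every integer $n>d$ the fibre sequence $K(\piA_n(Q_{2d-2}),n)\to Q_{2d-2}[n]\to Q_{2d-2}[n-1]$ produces after $[X,-]_{\A^1}$ adjacent contributions $H^n(X,\piA_n(Q_{2d-2}))$ and $H^{n+1}(X,\piA_n(Q_{2d-2}))$, both of which vanish since $X$ is a smooth affine $k$-variety of Nisnevich cohomological dimension at most $d$. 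The bijection then follows from the $\A^1$-weak equivalence $Q_{2d-2}\simeq \hlim_n Q_{2d-2}[n]$ guaranteed by Proposition~\ref{MP-tower}(4), and the factorisation of $H$ through $Q_{2d-2}[d]$ is automatic.

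\smallskip
\noindent
\textbf{Step 2: the remaining stage.} The stage from $Q_{2d-2}[d]$ to $Q_{2d-2}[d-1]$ is, by Proposition~\ref{MP-tower}(3), an $\A^1$-fibre sequence
$$K(\piA_d(Q_{2d-2}),d)\longrightarrow Q_{2d-2}[d]\longrightarrow K(\KMW_{d-1},d-1)$$
with $k$-invariant a map $K(\KMW_{d-1},d-1)\to K(\piA_d(Q_{2d-2}),d+1)$. Applying $[X,-]_{\A^1}$ yields the exact sequence of pointed sets
$$H^d(X,\piA_d(Q_{2d-2}))\longrightarrow [X,Q_{2d-2}[d]]_{\A^1}\longrightarrow H^{d-1}(X,\KMW_{d-1})\longrightarrow H^{d+1}(X,\piA_d(Q_{2d-2})).$$
The right-hand group vanishes because $\dim X=d$, which combined with Step~1 gives the surjectivity in (1). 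Exactness at $[X,Q_{2d-2}[d]]_{\A^1}$ shows that the fibre of $H$ over the base-point is the image of $H^d(X,\piA_d(Q_{2d-2}))$, proving (2); in particular, vanishing of that group forces $H$ to be injective.

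\smallskip
\noindent
\textbf{Main obstacle.} The argument is essentially a formal dimension count once the Moore--Postnikov apparatus and the identification of the first non-trivial homotopy sheaf $\piA_{d-1}(Q_{2d-2})\cong \KMW_{d-1}$ are in hand. The only point requiring genuine care is checking that the bijection of Step~1 is compatible with the Hurewicz map, so that the long exact sequence of Step~2 really computes the kernel of $H$ and not merely of some truncated approximation.
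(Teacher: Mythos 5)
Your argument is correct and, modulo phrasing, is exactly what the paper is invoking: the paper's proof consists of the one-line citation of \cite[Proposition 6.2]{AF14}, and your proof is a self-contained reconstruction of that cited result by running the Moore--Postnikov tower directly. Step~1 uses that $Q_{2d-2}$ is $\A^1$-$(d-2)$-connected (so the tower is principal, $\piA_1(B)$ being trivial), and that for $n>d$ both $H^n(X,\piA_n(Q_{2d-2}))$ and $H^{n+1}(X,\piA_n(Q_{2d-2}))$ vanish because the Nisnevich cohomological dimension of $X$ is at most $d$; bijectivity of $[X,E_n]\to[X,E_{n-1}]$ then follows from exactness plus the principal action of $H^n(X,\piA_n)$ on $[X,E_n]$, whose orbits are precisely the fibres. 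Step~2 is then the standard four-term exact sequence coming from the penultimate stage and its $k$-invariant. The one thing worth making explicit, which you gesture at only implicitly, is that the passage from ``fibre over the basepoint is the image of $H^d(X,\piA_d(Q_{2d-2}))$'' to ``$H$ is injective when $H^d(X,\piA_d(Q_{2d-2}))=0$'' again uses the orbit description of all the fibres of a principal fibration, not only the fibre over the basepoint; and that ``kernel'' literally makes sense because $[X,Q_{2d-2}]_{\A^1}$ carries a group structure for $d\geq 4$ (this is the content of the remark following Proposition~\ref{prop1} in the paper, citing \cite[Theorem 1.3.4]{AF16}), with respect to which $H$ is a homomorphism. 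The compatibility concern you flag as the ``main obstacle'' is in fact harmless: $H$ is by definition the composite $[X,Q_{2d-2}]_{\A^1}\to[X,Q_{2d-2}[d]]_{\A^1}\to[X,Q_{2d-2}[d-1]]_{\A^1}$, and the first arrow is a bijection, so the kernel computed in Step~2 is the kernel of $H$ with no further checking needed.
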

 
\begin{proof}
The proof uses similar arguments as in  ~\cite[Proposition 6.2]{AF14}. However we provide the details for the reader's convenience.
 We recall the notations from ~\cite[Proposition 1.2.6]{AF22}. Consider the Moore-Postnikov tower $\{E_{\Dot}\}$ for the morphism $Q_{2d-2} \rightarrow *$. To simplify the notation, for $j\geq 1$, we denote $\pi_{j}:=\piA_{j}(Q_{2d-2})$ and then note $\pi_{d-1}:=\piA_{d-1}(Q_{2d-2})=\KMW_{d-1}$ (see~\Cref{quadric-hom-type}).
Given $\alpha: X\to  \K(\piA_{d-1}(Q_{2d-2}), d-1) $, consider the following diagram 
$$\xymatrix@1{  &   &   E_d \ar[d]& \\ X\ar[r]^(.3){\alpha} & \K(\pi_{d-1}, d-1)\ar[r]^(.6){\gamma}  & E_{d-1}\ar[r]^{k_{d}}  & \K(\pi_{d}, d+1)\\}$$
We have $\beta:=\gamma\circ \alpha: X\to \K(\pi_{d-1}, d-1)\to E_{d-1}$ lifts to $E_d$ if and only if $\beta^*(k_d)=0$ in $\coh^{d+1}(X, \pi_d)$. Furthermore the set of lifts is parametrized by quotients of $\coh^{d}(X, \piA_{d}(Q_{2d-2})).$
Now since $X$ has Krull dimension $d$, we get  $\coh^{d+1}(X, \pi_d)=0$. So $\beta$ lifts to $E_d$. Further the lift is unique, by assumption in (2).
{And the set of such possible lifts corresponds to elements of $\coh^d(X, \pi_d)$ which is also 0 by \cite[Prop. 1.1.5]{AF22}, hence the lift is unique.} 
Continuing this way for $i\geq 1$, $\beta$ lifts to $E_{d-1+i}$ if $\coh^{d+i}(X, \pi_{d-1+i})=0$ (again because Krull dimension of $X=d$).  And the set of lifts is parametrized by quotients of $\coh^{d-1+i}(X,\pi_{d-1+i})$ which is zero for $i=1$ by assumption in (2) and for $i>1$ because Krull dimension of $X$ is $d$ and $d-1+i>d.$
{and the lift is unique if $\coh^{d-1+i}(X, \pi_{d-1+i})=0$ (which follows again from Krull dimension of $X$).}
Thus the map $$H: [X, Q_{2d-2}]_{\A^1}\to \~{\CH}^{d-1}(X)$$ is surjective and under the assumption in (2) the map $H$ is injective.
\end{proof}

\begin{remark}
In the above proposition, under the hypothesis $d\geq 4$ the set 
$[X, Q_{2d-2}]_{\A^1}$ has a group structure (see \cite[Theorem 1.3.4]{AF22}).

At this point, we do not know whether the injectivity of $H$ implies
$\coh^d(X, \piA_d(Q_{2d-2}))=0$.
\end{remark}

\begin{proposition}\label{prop2}
Let $k$ be an infinite field with $cd_2(k)=0$, (i.e. the \'etale cohomological dimension of the field $k$ at the prime 2 is zero). Let $X$  a smooth affine variety of dimension $d\geq 2$ over $k$.
Then the natural  map $$\~{\CH}^{d-1}(X)\to \CH^{d-1}(X) $$ is an isomorphism.
\end{proposition}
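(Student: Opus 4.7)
The plan is to reduce the claim to two cohomological vanishings via the fundamental short exact sequence of Nisnevich sheaves
$$0 \to \&I^d \to \KMW_{d-1} \to \KM_{d-1} \to 0.$$
Its long exact sequence in Nisnevich cohomology
$$H^{d-1}(X, \&I^d) \to \widetilde{CH}^{d-1}(X) \to CH^{d-1}(X) \to H^d(X, \&I^d)$$
shows that it is enough to establish $H^{d-1}(X, \&I^d) = 0$ and $H^d(X, \&I^d) = 0$.

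The key structural input will be to show that $\&I^{d+1} = 0$ as a Nisnevich sheaf on $X$. Using the Gersten resolution of $\&I^{d+1}$, whose codimension $i$ summand is $\bigoplus_{x \in X^{(i)}} I^{d+1-i}(k(x))$, I would combine the transcendence degree bound $cd_2(k(x)) \leq cd_2(k) + (d-i) = d-i$ with the general vanishing $I^{m+1}(F) = 0$ whenever $cd_2(F) \leq m$ (which follows from Voevodsky's resolution of the Milnor conjecture $I^n/I^{n+1} \cong K^M_n/2$ together with the Arason--Pfister Hauptsatz) to conclude that every Gersten summand vanishes.

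Combining $\&I^{d+1} = 0$ with the short exact sequence $0 \to \&I^{d+1} \to \&I^d \to \&I^d/\&I^{d+1} \to 0$ and the Milnor conjecture identification $\&I^d/\&I^{d+1} \cong \KM_d/2$ yields $\&I^d \cong \KM_d/2$, so the problem reduces to proving
$$H^{d-1}(X, \KM_d/2) = 0 \quad \text{and} \quad H^d(X, \KM_d/2) = 0.$$

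The final step will treat these mod-$2$ cohomology groups through the short exact sequence $0 \to \KM_d \xrightarrow{\cdot 2} \KM_d \to \KM_d/2 \to 0$ together with Bloch's formula $H^d(X, \KM_d) \cong CH^d(X)$; the resulting long exact sequence reduces both desired vanishings to the unique $2$-divisibility of $CH^d(X)$ and the $2$-divisibility of $H^{d-1}(X, \KM_d)$. The hard part will be justifying these divisibility statements, and this is precisely where the hypothesis $cd_2(k) = 0$ is used: because the residue fields $k(x)$ at closed points of $X$ remain in the class $cd_2 = 0$ (finite extensions of $k$ necessarily have odd degree), one can adapt the classical divisibility results for top Chow groups of smooth affine varieties to the present setting by a Gersten-type analysis of the relevant complexes.
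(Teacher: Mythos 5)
Your proposal begins exactly as the paper does: you write the short exact sequence $0 \to \&I^d \to \KMW_{d-1} \to \KM_{d-1} \to 0$ and its long exact sequence in Nisnevich cohomology, reducing the claim to the two vanishings $H^{d-1}(X,\&I^d)=0$ and $H^d(X,\&I^d)=0$. At that point the paper simply cites \cite[Proposition~5.2]{AF14}, whereas you try to prove these vanishings directly. Your first reductions are sound and in fact unwind the mechanism behind the cited result: the Gersten/Rost--Schmid resolution of $\&I^{d+1}$ on $X$ has codimension-$p$ term $\bigoplus_{x\in X^{(p)}}I^{d+1-p}(k(x))$, the bound $cd_2(k(x))\le cd_2(k)+\mathrm{tr.deg}(k(x)/k)=d-p$ together with Arason--Pfister and the Milnor conjecture kills every term, and one then gets $\&I^d|_X\cong \KM_d/2|_X$, reducing everything to $H^{d-1}(X,\KM_d/2)=0$ and $H^d(X,\KM_d/2)=0$.

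The genuine gap is in your final step. You propose deducing these mod-$2$ vanishings from the unique $2$-divisibility of $CH^d(X)$ and the $2$-divisibility of $H^{d-1}(X,\KM_d)$, and you then assert that ``one can adapt the classical divisibility results for top Chow groups of smooth affine varieties to the present setting by a Gersten-type analysis of the relevant complexes.'' This is not an argument: the classical divisibility results over algebraically closed fields rely on deep input from the theory of zero-cycles (Roitman's theorem, generalized Albanese varieties), not on a formal Gersten computation, and there is no indication in your sketch of how they would transfer to an arbitrary infinite field with $cd_2(k)=0$. Worse, by the very long exact sequence you invoke, those integral divisibility statements are \emph{equivalent} to the two mod-$2$ vanishings you are trying to prove, so the reduction is circular and you have not made progress. (There is also a minor issue that $0\to\KM_d\xrightarrow{\cdot 2}\KM_d\to\KM_d/2\to 0$ is not exact on the left because $\KM_d$ has $2$-torsion sections, though this is repairable.) A correct route from $cd_2(k)=0$ to these vanishings does not pass through integral divisibility at all; it uses the bound $cd_2(X)\le d$ for $X$ affine of dimension $d$ over such a $k$, together with the Bloch--Ogus coniveau spectral sequence and the Bloch--Kato identification $E_2^{p,q}=H^p_{\mathrm{Nis}}(X,\KM_q/2)$, which forces $E_2^{d-1,d}=E_2^{d,d}=0$ because the abutment $H^{2d-1}_{\mathrm{\acute et}}(X,\mathbb Z/2)$ and $H^{2d}_{\mathrm{\acute et}}(X,\mathbb Z/2)$ vanish and no differentials can contribute. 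That \'etale-dimension argument is what underlies the paper's citation of \cite[Proposition~5.2]{AF14}, and it is the missing content in your proposal.
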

\begin{proof}
We consider the map $$\~{\CH}^{d-1}(X)=\coh^{d-1}(X, \KMW_{d-1})\to \CH^{d-1}(X)=\coh^{d-1}(X, \KM_{d-1}) $$
We have a short exact sequence of Zariski/Nisnevich sheaves 
$$0\to \&I^{d}\to \KMW_{d-1}\to \KM_{d-1}\to 0$$
which gives a long exact sequence of cohomlogies 
$$\cdots\to \coh^{d-1}(X, \&I^{d})\to \coh^{d-1}(X, \KMW_{d-1})\to \coh^{d-1}(X, \KM_{d-1})\to \coh^{d}(X, \&I^{d})\to\cdots$$
Now by ~\cite[Proposition 5.2]{AF14}, (for $cd_2(k)=r=0$), $\coh^{d-1}(X, \&I^{d})=0 $ and $\coh^{d}(X, \&I^{d})=0$.

Thus, the map $$\coh^{d-1}(X, \KMW_{d-1})\to \coh^{d-1}(X, \KM_{d-1})$$ is an isomorphism, hence the result.
\end{proof}

\begin{theorem} \label{Main-1}
Let $k$ be an algebraically closed field
and $A$ be a smooth affine algebra of dimension $d\geq 4$ over $k$. Let $X=\Spec(A)$.
Consider the map $$\psi: \E^{d-1}(A)\to \CH^{d-1}(X).$$ Then 
\begin{enumerate}
    \item the map $\psi$ is surjective. 
    \item further under the assumption $\coh^d(X, \piA_d(Q_{2d-2}))=0$, the map $\psi$ is isomorphism.
\end{enumerate}
\end{theorem}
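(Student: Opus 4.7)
The plan is to identify the map $\psi$ with a composite of two maps that can be controlled using the machinery developed earlier in the paper. Specifically, by the comparison of Euler class groups with motivic cohomotopy (due to Asok--Fasel, extending to $d\geq 4$ where $[X,Q_{2d-2}]_{\A^1}$ carries a group structure as noted after Proposition~\ref{prop1}), there is a natural identification $E^{d-1}(A) \cong [X, Q_{2d-2}]_{\A^1}$ under which $\psi$ corresponds to the composite
\[
E^{d-1}(A) \;\cong\; [X, Q_{2d-2}]_{\A^1} \;\xrightarrow{H}\; \widetilde{CH}^{d-1}(X) \;\longrightarrow\; CH^{d-1}(X),
\]
where $H$ is the Hurewicz map of Proposition~\ref{prop1} and the second arrow is the natural forgetful map from Chow--Witt to Chow.

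First I would verify the commutativity of this diagram. The Asok--Fasel identification sends a class $(I,\omega_I) \in E^{d-1}(A)$ (a codimension $d-1$ complete intersection $V(I) \subset X$ equipped with a local orientation) to the homotopy class of a map $X \to Q_{2d-2}$ that collapses the complement of $V(I)$ and realizes $\omega_I$ locally. Applying $H$ and then the forgetful map returns the cycle class $[V(I)] \in CH^{d-1}(X)$, which is exactly $\psi(I,\omega_I)$. At this step one also invokes Remark~\ref{remk-1} to pass freely between the Bhatwadekar--Sridharan and Das formulations of $E^{d-1}(A)$ in the range $d \geq 4$, ensuring that the geometric definition used to set up $\psi$ is compatible with the one supporting the motivic identification.

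Once the diagram is in place, both parts follow cleanly. For part~(1), surjectivity of $\psi$ is immediate: Proposition~\ref{prop1}(1) gives surjectivity of $H$, and Proposition~\ref{prop2} shows that $\widetilde{CH}^{d-1}(X) \to CH^{d-1}(X)$ is an isomorphism because $k$ algebraically closed forces $cd_2(k)=0$. For part~(2), under the added hypothesis $H^d(X, \pi_d^{\A^1}(Q_{2d-2}))=0$, Proposition~\ref{prop1}(2) forces the kernel of $H$ to vanish, so $H$ is an isomorphism; composing with the isomorphism $\widetilde{CH}^{d-1}(X) \cong CH^{d-1}(X)$ gives that $\psi$ itself is an isomorphism.

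The main obstacle is securing the identification $E^{d-1}(A) \cong [X, Q_{2d-2}]_{\A^1}$ and the factorization of $\psi$ through the Hurewicz map. This is the comparison of an algebraic (ideal-theoretic) construction with a motivic-homotopical one, and it is delicate: in dimension $d=3$ it is already the content of \cite[Theorem 7.2]{MD21}, while in higher dimensions one needs the Asok--Fasel machinery together with the agreement $E^{d-1}_{BS}(A) \cong E^{d-1}_{MD}(A)$ from Remark~\ref{remk-1}. Once this bridge is in hand, the rest of the argument is purely a formal consequence of Propositions~\ref{prop1} and~\ref{prop2}.
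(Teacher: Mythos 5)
Your proposal takes essentially the same route as the paper: identify $E^{d-1}(A)\cong[X,Q_{2d-2}]_{\A^1}$ (via \cite[Theorem 1(4)]{AF16}), factor $\psi$ as the Hurewicz map followed by the reduction $\widetilde{CH}^{d-1}(X)\to CH^{d-1}(X)$, and then read off surjectivity from Proposition~\ref{prop1}(1) and Proposition~\ref{prop2}, and injectivity from Proposition~\ref{prop1}(2) under the vanishing hypothesis. The only difference is that the paper makes the commutativity of the triangle explicit by writing down the Segre-class map $s(I,\omega)=[(\underline{a},\underline{b},s)]$ as an actual morphism $f:X\to Q_{2d-2}$ and computing $f^*$ of the generator of $H^{d-1}(Q_{2d-2},\KMW_{d-1})$, whereas you describe this step informally; the explicit computation is what you would need to supply to complete the argument.
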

\begin{proof}(1)
 We first claim that the following diagram 
$$\xymatrix{
\E^{d-1}(A)\ar[r]^-{\simeq}_-{s}\ar[rd]\ar[rdd]_{\psi} & [X, Q_{2d-2}]_{\A^1}\ar@{->>}[d]^{H}\\
        & \~{\CH}^{d-1}(X)\ar@{->}[d]^{\simeq ~by~ Proposition~\ref{prop2}}\\
        & \CH^{d-1}(X)
}$$ commutes, where $s$ denotes the Segre class map, $H$ the Hurewicz map and $\psi$ is defined in (\ref{eqn:EtoCH}). 
Before justifying the commutativity of the above diagram, from \cite[Definition 3.1.3.]{AF22} let us recall the Segre class morphism
$$s: \E^{d-1}(A)\to \pi_0(Sing_*^{\A^1}(Q_{2d-2})(A))\simeq [X, Q_{2d-2}]_{\A^1}.$$ 
Note that Asok-Fasel \cite{AF22} rewrote the definition of Euler class group slightly differently, which is  same as Bhatwadekar--Raja Sridharan's Euler class group (see Remark 3.1.6 of \cite{AF22}). With this in mind, for $(I, \w_I)\in \E^{d-1}(A)$, there exist $a_1,\ldots, a_{d-1}, t \in I$ and $b_1,\ldots,b_{d-1} \in A$ such that $I=<\_a, t>$ and $(\_a,\_b,t) \in Q_{2d-2}(A)$ where $\_a=(a_1,\ldots,a_{d-1}),\, \_b=(b_1,\ldots,b_{d-1})$ (see \cite[Theorem 3.1.2.]{AF22}). The Segre class map is an isomorphism which follows combining \cite[Proposition 3.1.9]{AF22} and \cite[Theorem 4 (3)]{Asok22}. We remark that \cite[Proposition 3.1.9]{AF22} assumes $char(k)\neq 2$ and \cite[Theorem 4 (3)]{Asok22} removes the characteristic hypothesis assuming $k$ infinite. Note in our case $k$ is algebraically closed, hence infinite.

By Proposition (\ref{prop1}), the Hurewicz map $H$ is surjective. 

For commutativity of the above diagram, let $[(I, \w)]\in \E^{d-1}(A)$. Then $\psi(I, \w_I)=(I)$, the fundamental class associated to the closed subscheme $\Spec A/I$ in $X$ and
$s(I, \w)=([\_a, \_b, t])=f$ (say)  where $I=<\_a, t>$. 
Then $$f: X=\Spec A\to Q_{2d-2}=\Spec \dfrac{k[\_x, \_y, z]}{\sum x_iy_i-z(1-z)}$$ is the morphism, defined as $$x_i\mapsto a_i, \ y_i\mapsto b_i, \ z\mapsto t.$$  
Under the Hurewicz map $$H: [X, Q_{2d-2}]_{\A^1}\to \~{\CH}^{d-1}(X)=\coh^{d-1}(X, \KMW_{d-1})$$ 
$$f\mapsto f^*(\alpha)$$ where $\alpha\in \coh^{d-1}(X, \KMW_{d-1})$.
Now $\alpha$ corresponds to $<1>\otimes \-{x}_1\wedge\cdots \-{x}_{d-1}\in \KMW_0(\kappa(w))\otimes \det \fr m_w/\fr m_w^2$ where $w$ is the generic point of the codimension $d-1$ irreducible closed subscheme $W$ of $Q_{2d-2}$ given by
the equation $$x_1=\cdots= x_{d-1}=z=0.$$
We have the pullback map $$f^*: \coh^{d-1}(Q_{2d-2}, \KMW_{d-1})\to \coh^{d-1}(X, \KMW_{d-1})$$
such that $$f^*(\alpha)=f^*(<1>\otimes \-x_1\wedge\cdots \wedge \-x_{d-1})= \sum_{u} l_u <1>\otimes \-a_1\wedge\cdots \wedge \-a_{d-1} $$ where the sum varies over the generic points $u$ of $f^{-1}(W)$ and $l_u$ is the length of the Artinian local ring $\sO_{f^{-1}(W), u}$.

Under the map $$\coh^{d-1}(X, \KMW_{d-1})\to \coh^{d-1}(X, \KM_{d-1})$$ the element  
$f^*(\alpha)$ maps to $\sum_{u} l_u[u]$ which is precisely the fundamental class associated to the closed subscheme $f^{-1}(W)$, but we note that $f^{-1}(W)=\Spec A/I$.  This completes the proof of commutativity.

By~\cite[Theorem 1(4)]{AF22}, we have  $\E^{d-1}(A)\to [X, Q_{2d-2}]_{\A^1}$ is an isomorphism.
Hence the composition map $\psi$ is surjective. 

(2) This follows directly from Proposition~\ref{prop1} (2), as the assumption implies that $H$ is an isomorphism. Hence the composition $\psi$ is an isomorphism. 
\end{proof}

As a consequence of combining the above theorem with the vanishing results from section~\ref{vanishing-sec}, we prove the following result. 
\begin{corollary}\label{cor:iso} Let $k$ be an algebraically closed field of characteristic $\neq 2$.
Let $X=\Spec A$ be a smooth affine $k-$scheme of dimension $d\geq 4$.

Consider the map $$\psi: \E^{d-1}(A)\to \CH^{d-1}(X).$$ Then the following statements hold. 

\begin{enumerate}
    \item[$(1)$] The map $\psi$ is always surjective.

 \item[$(2)$] If $d=4$, then the map $\psi$ is an isomorphism. 
 \item[$(3)$] If further characteristic of $k$ is $0$, then the map $\psi$ is an isomorphism. 
\item[$(4)$]  If $d\geq 5$, then the kernel of the map $\psi$ is $d!\cdot (d-1)!\cdot(d-2)!-$torsion.
         \item[$(5)$] If $d\geq 5$ and $d!\in k^{\x}$, then the kernel of the map $\psi$ is $(d-1)!\cdot(d-2)!-$torsion.
\item[$(6)$] Further for $d\geq 5$, if Conjecture \ref{AF-conj} holds, (which is the case when characteristic of $k=0)$  then the map $\psi$ is an isomorphism.
\end{enumerate}
\end{corollary}

\begin{proof}
The assertion (1) follows by combining Remark \ref{remk-1} and Theorem \ref{Main-1}.  Case (2) follows from Theorem \ref{Main-1} and Theorem~\ref{vanishing}~(3). Case (3) follows from Theorem~\ref{vanishing}~(4). Case (4) follows from Theorem~\ref{vanishing} (5).  Case (5) follows from Theorem~\ref{vanishing} (6).  Case (6) follows from Theorem~\ref{vanishing} (7).
\end{proof}


\begin{thebibliography}{9999}



\bibitem{Asok22} Asok, A. “Affine representability of quadrics revisited.” J. Algebra  608 (2022): 37–51. https://doi.org/10.1016/j.jalgebra.2022.05.018. 


\bibitem{ABH23}	Asok, A., T.  Bachmann, and M.J.  Hopkins. “On 
$\#P^1$-stabilization in unstable motivic homotopy theory.” To appear in Ann. Math. (2023): https://annals.math.princeton.edu/articles/22414.

\bibitem{ADF17}	Asok, A., B.  Doran, and J.  Fasel. “Smooth models of motivic spheres and the clutching construction.” Int. Math. Res. Not. IMRN.  6 (2017): rnw065–1925. https://doi.org/10.1093/imrn/rnw065.

\bibitem{AF14}	Asok, A. and J.  Fasel. “A cohomological classification of vector bundles on smooth affine threefolds.” Duke Math. J.  163, no. 14 (2014): 2561–601. https://doi.org/10.1215/00127094-2819299.

\bibitem{AF14-JTop}	Asok, A. and J.  Fasel. “Algebraic vector bundles on spheres.” J. Topol.  7, no. 3 (2014): 894–926. https://doi.org/10.1112/jtopol/jtt046.

\bibitem{AF15}	Asok, A. and J.  Fasel. “Splitting vector bundles outside the stable range and 
homotopy sheaves of punctured affine spaces.” J. Amer. Math. Soc.  28, no. 4 (2015): 1031–62.

\bibitem{AF13}	Asok, A. and J.  Fasel. “Toward a meta-stable range in 
homotopy theory of punctured affine spaces.” Oberwolfach Report: June 2013, https://dornsife.usc.edu/assets/sites/1176/docs/PDF/pinanminus0OWR.pdf

\bibitem{AF16a}	Asok, A. and J.  Fasel. “Comparing Euler classes, quart.” J. Math. Oxford Ser. (2)  67, no. 4 (2016): 603–35. https://doi.org/10.1093/qmath/haw033.

\bibitem{AF17}	Asok, A. and J.  Fasel. “An explicit KO-degree map and applications.” J. Topology  10, no. 1 (2017): 268–300. https://doi.org/10.1112/topo.12007.

\bibitem{AHW18}	Asok, A., M.  Hoyois, and M.  Wendt. “Affine representability results in 
homotopy theory, II: principal bundles and homogeneous spaces.” Geom. Topol.  22, no. 2 (2018): 1181–225. https://doi.org/10.2140/gt.2018.22.1181.

\bibitem{AF22}	Asok, A. and J.  Fasel. “Euler class groups and motivic stable cohomotopy. With an appendix by Mrinal Kanti Das.” J. Eur. Math. Soc. (JEMS)  24, no. 8 (2022): 2775–822.

\bibitem{AFH22}	Asok, A., J.  Fasel, and M. J.  Hopkins. “Localization and nilpotent spaces in 
homotopy theory.” Compositio Math.  158, no. 3 (2022): 654–720. https://doi.org/10.1112/S0010437X22007321.

\bibitem{Bass64}	Bass, H. “K-theory and stable algebra.” Publ. Math. Inst. Hautes \'Etudes Sci.  22 (1964): 5–60. https://doi.org/10.1007/BF02684689.

\bibitem{Bass69}	Bass, H. “Modules which support nonsingular form.” J. Algebra  13 (1969): 246–52. https://doi.org/10.1016/0021-8693(69)90073-8.

\bibitem{Bh01}	Bhatwadekar, S. M. “Cancellation theorems for projective modules over a two-dimensional ring and its polynomial extensions.” Compositio Math.  128, no. 3 (2001): 339–59. https://doi.org/10.1023/A:1011839525245.

\bibitem{BR02}	Bhatwadekar, S. M. and R.  Sridharan. “On Euler classes and stably free projective modules.” Algebra, Arithmetic and Geometry, Part I, II (Mumbai, 2000). 139–58. Tata Inst. Fund. Res. Stud. Math., 16. Bombay: Tata Inst. Fund. Res., 2002.

\bibitem{MD21} 	Das, M. K. “A criterion for splitting of a projective module in terms of its generic sections.” Internat. Math. Res. Notices  2021, no. 13 (2021): 10073–99. https://doi.org/10.1093/imrn/rnz102.

\bibitem{PD22}	Peng, D. “Enumerating non-stable vector bundles.” Int. Math. Res. Not. IMRN  2022 (2022): 14797–864. https://doi.org/10.1093/imrn/rnab103.

\bibitem{Fas11} 	Fasel, J. “Some remarks on orbit sets of unimodular rows.” Comment. Math. Helv.  86, no. 1 (2011): 13–39.

\bibitem{Fas15} 	Fasel, J. “Mennicke symbols, K-cohomology and a Bass–Kubota theorem.” Trans. Amer. Math. Soc.  367, no. 1 (2015): 191–208. https://doi.org/10.1090/S0002-9947-2014-06011-X.

\bibitem{Fas24} 	Fasel, J. “Suslin’s cancellation conjecture in the smooth case.” Duke Math. J  174, no. 12 (2025): 2383–423. https://doi.org/10.1215/00127094-2024-0076.

\bibitem{Fulton} 	Fulton, W.  Intersection Theory. Springer, 1998.

\bibitem{HW19} 	Hornbostel, J. and M.  Wendt. “Chow–Witt rings of classifying spaces for symplectic and special linear groups.” J. Topol.  12, no. 3 (2019): 916–66. https://doi.org/10.1112/topo.12103.

\bibitem{Hov99} 	Hovey, M. “Model categories.” Mathemathical Surveys and Monograph, vol. 63. AMS, 1999.

\bibitem{Lam06}	Lam, T. Y.  Serre’s Problem on Projective Modules. Springer Monogr. Math, 2006.

\bibitem{MK85}	Mohan Kumar, N. “Stably free modules.” Am. J. Math.  107 (1985): 1439–43. https://doi.org/10.2307/2374411.

\bibitem{MV99}	Morel, F. and V.  Voevodsky. “
 homotopy theory of schemes.” Publ. Math. IHES  90, no. 1999 (2001): 45–143. https://doi.org/10.1007/BF02698831.

\bibitem{Mor12} 	Morel, F.  ``$\#A^1$-Algebraic Topology Over a Field. LNM 2052. Heidelberg: Springer, 2012.

\bibitem{Mu94}	Murthy, M. P. “Zero cycles and projective modules.” Ann. Math.  140, no. 2 (1994): 405. https://doi.org/10.2307/2118605.

\bibitem{PW21} 	Panin, I. and C.  Walter. “Quaternionic Grassmannians and Borel classes in algebraic geometry.” St. Petersburg Math. J.  33, no. 1 (2022): 97–140. https://doi.org/10.1090/spmj/1692.

\bibitem{ST15}	 Schlichting, M. and G. S.  Tripathi. “Geometric models for higher Grothendieck–Witt groups in 
-homotopy theory.” Math. Ann.  362 (2015): 1143–67. https://doi.org/10.1007/s00208-014-1154-z.

\bibitem{Sch17} 	Schlichting, M. “Euler class groups and the homology of elementary and special linear groups.” Adv. Math.  320 (2017): 1–81. https://doi.org/10.1016/j.aim.2017.08.034.

\bibitem{Serre58} 	Serre, J.-P. “Modules projectifs et espaces fibrés à fibre vectorielle.” 1958 Séminaire P. Dubreil, M.-L. Dubreil-Jacotin et C. Pisot, 1957/58, Fasc. 2, Exposé 23. 18.

\bibitem{Suslin77}	Suslin, A. A. “On stably free modules.” Mat. Sb. Novaya Seriya  31, no. 144 (4) (1977): 479–91. https://doi.org/10.1070/SM1977v031n04ABEH003717.

\bibitem{Sus77} 	Suslin, A. A. “A cancellation theorem for projective modules over algebras.” Dokl. Akad. Nauk SSSR  236 (1977): 808–11.

\bibitem{SuVa76}	Suslin, A. A. and L. N.  Vaserstein. “Serre’s problem on projective modules over polynomial rings and algebraic K-theory.” Math. USSR, Izvestija.  10, no. 5 (1976): 937–1001. https://doi.org/10.1070/IM1976v010n05ABEH001822.

\bibitem{Weib13} 	Weibel, C. “The K-book: an introduction to algebraic K-theory.” Graduate Studies in Math., vol. 145. AMS, 2013.

\bibitem{Wendt21}	Wendt, M. “Variations in on a theme of Mohan Kumar.” Int. Math. Res. Not. IMRN  2021, no. 9 (2021): 6621–55. https://doi.org/10.1093/imrn/rnz041.



\end{thebibliography}
\end{document}